\begin{document}

\newcommand{\INVISIBLE}[1]{}

\newtheorem{thm}{Theorem}[section]
\newtheorem{lem}[thm]{Lemma}
\newtheorem{cor}[thm]{Corollary}
\newtheorem{prp}[thm]{Proposition}
\newtheorem{conj}[thm]{Conjecture}

\theoremstyle{definition}
\newtheorem{dfn}[thm]{Definition}
\newtheorem{question}[thm]{Question}
\newtheorem{nota}[thm]{Notations}
\newtheorem{notation}[thm]{Notation}
\newtheorem*{claim*}{Claim}
\newtheorem{ex}[thm]{Example}
\newtheorem{rmk}[thm]{Remark}
\newtheorem{rmks}[thm]{Remarks}
 
\def\labelenumi{(\arabic{enumi})}

\newcommand{\aro}{\longrightarrow}
\newcommand{\arou}[1]{\stackrel{#1}{\longrightarrow}}
\newcommand{\RA}{\Longrightarrow}

\newcommand{\mm}[1]{\mathrm{#1}}
\newcommand{\bm}[1]{\boldsymbol{#1}}
\newcommand{\bb}[1]{\mathbf{#1}}

\newcommand{\bA}{\boldsymbol A}
\newcommand{\bB}{\boldsymbol B}
\newcommand{\bC}{\boldsymbol C}
\newcommand{\bD}{\boldsymbol D}
\newcommand{\bE}{\boldsymbol E}
\newcommand{\bF}{\boldsymbol F}
\newcommand{\bG}{\boldsymbol G}
\newcommand{\bH}{\boldsymbol H}
\newcommand{\bI}{\boldsymbol I}
\newcommand{\bJ}{\boldsymbol J}
\newcommand{\bK}{\boldsymbol K}
\newcommand{\bL}{\boldsymbol L}
\newcommand{\bM}{\boldsymbol M}
\newcommand{\bN}{\boldsymbol N}
\newcommand{\bO}{\boldsymbol O}
\newcommand{\bP}{\boldsymbol P}
\newcommand{\bY}{\boldsymbol Y}
\newcommand{\bS}{\boldsymbol S}
\newcommand{\bX}{\boldsymbol X}
\newcommand{\bZ}{\boldsymbol Z}

\newcommand{\cc}[1]{\mathcal{#1}}
\newcommand{\ccc}[1]{\mathscr{#1}}

\newcommand{\ca}{\cc{A}}

\newcommand{\cb}{\cc{B}}

\newcommand{\cC}{\cc{C}}

\newcommand{\cd}{\cc{D}}

\newcommand{\ce}{\cc{E}}

\newcommand{\cf}{\cc{F}}

\newcommand{\cg}{\cc{G}}

\newcommand{\ch}{\cc{H}}

\newcommand{\ci}{\cc{I}}

\newcommand{\cj}{\cc{J}}

\newcommand{\ck}{\cc{K}}

\newcommand{\cl}{\cc{L}}

\newcommand{\cm}{\cc{M}}

\newcommand{\cn}{\cc{N}}

\newcommand{\co}{\cc{O}}

\newcommand{\cp}{\cc{P}}

\newcommand{\cq}{\cc{Q}}

\newcommand{\cR}{\cc{R}}

\newcommand{\cs}{\cc{S}}

\newcommand{\ct}{\cc{T}}

\newcommand{\cu}{\cc{U}}

\newcommand{\cv}{\cc{V}}

\newcommand{\cy}{\cc{Y}}

\newcommand{\cw}{\cc{W}}

\newcommand{\cz}{\cc{Z}}

\newcommand{\cx}{\cc{X}}

\newcommand{\g}[1]{\mathfrak{#1}}

\newcommand{\af}{\mathds{A}}
\newcommand{\PP}{\mathds{P}}

\newcommand{\GL}{\mathrm{GL}}
\newcommand{\PGL}{\mathrm{PGL}}
\newcommand{\SL}{\mathrm{SL}}
\newcommand{\NN}{\mathbf{N}}
\newcommand{\ZZ}{\mathbf{Z}}
\newcommand{\CC}{\mathbf{C}}
\newcommand{\QQ}{\mathbf{Q}}
\newcommand{\RR}{\mathds{R}}
\newcommand{\FF}{\mathds{F}}
\newcommand{\DD}{\mathbf{D}}
\newcommand{\VV}{\mathds{V}}
\newcommand{\HH}{\mathds{H}}
\newcommand{\MM}{\mathds{M}}
\newcommand{\OO}{\mathds{O}}
\newcommand{\LL}{\mathds L}
\newcommand{\BB}{\mathds B}
\newcommand{\kk}{\mathds k}
\newcommand{\bs}{\mathbf S}
\newcommand{\GG}{\mathbf G}

\newcommand{\al}{\alpha}

\newcommand{\be}{\beta}

\newcommand{\ga}{\gamma}
\newcommand{\Ga}{\Gamma}

\newcommand{\om}{\omega}
\newcommand{\Om}{\Omega}

\newcommand{\vte}{\vartheta}
\newcommand{\te}{\theta}
\newcommand{\Te}{\Theta}

\newcommand{\ph}{\varphi}
\newcommand{\Ph}{\Phi}

\newcommand{\ps}{\psi}
\newcommand{\Ps}{\Psi}

\newcommand{\ep}{\varepsilon}

\newcommand{\vr}{\varrho}

\newcommand{\de}{\delta}
\newcommand{\De}{\Delta}

\newcommand{\la}{\lambda}
\newcommand{\La}{\Lambda}

\newcommand{\ka}{\kappa}

\newcommand{\si}{\sigma}
\newcommand{\Si}{\Sigma}

\newcommand{\ze}{\zeta}

\newcommand{\fr}[2]{\frac{#1}{#2}}
\newcommand{\vs}{\vspace{0.3cm}}
\newcommand{\na}{\nabla}
\newcommand{\pd}{\partial}
\newcommand{\po}{\cdot}
\newcommand{\met}[2]{\left\langle #1, #2 \right\rangle}
\newcommand{\rep}[2]{\mathrm{Rep}_{#1}(#2)}
\newcommand{\repp}[2]{\mathrm{Rep}^\circ_{#1}(#2)}
\newcommand{\hh}[3]{\mathrm{Hom}_{#1}(#2,#3)}
\newcommand{\modules}[1]{#1\text{-}\mathbf{mod}}
\newcommand{\Modules}[1]{#1\text{-}\mathbf{Mod}}
\newcommand{\dmod}[1]{\mathcal{D}(#1)\text{-}{\bf mod}}

\newcommand{\an}{\mathrm{an}}

\newcommand{\pos}[2]{#1\llbracket#2\rrbracket}

\newcommand{\cpos}[2]{#1\langle#2\rangle}

\newcommand{\id}{\mathrm{id}}

\newcommand{\one}{\mathds 1}

\newcommand{\ti}{\times}
\newcommand{\tiu}[1]{\underset{#1}{\times}}

\newcommand{\ot}{\otimes}
\newcommand{\otu}[1]{\underset{#1}{\otimes}}

\newcommand{\wh}{\widehat}
\newcommand{\wt}{\widetilde}
\newcommand{\ov}[1]{\overline{#1}}
\newcommand{\un}[1]{\underline{#1}}

\newcommand{\op}{\oplus}

\newcommand{\lid}{\varinjlim}
\newcommand{\lip}{\varprojlim}

\newcommand{\ega}[3]{[EGA $\mathrm{#1}_{\mathrm{#2}}$, #3]}

\title[Group schemes over discrete valuation rings, II]{On the structure of affine flat group schemes over discrete valuation rings, II }

\author[P. H. Hai]{Ph\`ung H\^o Hai}

\address{Institute of Mathematics, Vietnam Academy of Science and Technology, Hanoi, 
Vietnam}

\email{phung@math.ac.vn}
\thanks{The research of Ph\`ung H\^o Hai is funded by the Vietnam National Foundation for Science and Technology Development under grant number    101.04-2019.315.
Parts of this work were developed while J. P. dos Santos was temporarily assigned to the CNRS}

\author[J. P. dos Santos]{Jo\~ao Pedro  dos Santos}

\address{Institut de Math\'ematiques de Jussieu -- Paris Rive Gauche, 4 place Jussieu, 
Case 247, 75252 Paris Cedex 5, France}

\email{joao\_pedro.dos\_santos@yahoo.com}

\subjclass[2010]{14F10,	14L15}

\keywords{Affine group schemes over discrete valuation rings, Neron blowups, Tannakian categories, differential Galois theory, prudence.}

\date{Version 7, 27 August 2020.}

\begin{abstract}
In the first part of this work \cite{DHdS15}, we studied affine group schemes over a discrete valuation ring by means of Neron blowups. We also showed how to apply these findings to throw light on the  group schemes coming from  Tannakian categories of $\cd$-modules. In the present work we follow up   this theme. We show that a certain class of  affine group schemes  of ``infinite type'', Neron blowups of formal subgroups, are quite typical. We also explain how these group schemes appear naturally in  Tannakian categories of $\cd$-modules.  To conclude we isolate a Tannakian property of affine group schemes, named prudence, which allows one to verify if the underlying  ring of functions  is a  free module  over the base ring. This is then successfully applied to obtain a general result on the structure of differential Galois groups over complete discrete valuation rings.
\end{abstract}
\maketitle

\section{Introduction}

In this paper we continue the analysis of  {\it affine flat group schemes} over a discrete valuation ring (DVR) $R$  started in \cite{DHdS15} and use it to derive results in  {\it differential Galois theory}. 
Recall that the main idea behind the latter theory is to attach to a linear differential  systems, or  $\cd$-module, a linear algebraic group and to study these   concurrently. Now, when dealing with objects having an ``arithmetic'' input---like differential systems with coefficients in $R$---the group theoretical side of the picture is better grasped with the help of  Tannakian categories and flat group schemes which are {\it not} necessarily of finite type over $R$. (A very simple example is below and details are in \cite[Section 7]{DHdS15}. See also Section \ref{22.08.2020--1}.) Indeed, for a smooth   $R$-scheme $X$ with geometrically connected fibres and a point $x_0\in X(R)$, and a $\cd$--module $\cm$ on $X$, the category of representations of the (full) differential Galois group is ``generated'' by $\cm$ but, contrary to the case of a base field, this   property does not assure the existence of an immersion into $\bb{GL}(x_0^*\cm)$. 
It then becomes relevant to throw light on general properties of arbitrary affine   flat  group schemes.  
Some of these properties were spotted in  \cite{DH14}  and \cite{DHdS15} and further meditation conducted to answers which we write down here. To focus our attention, we shall raise three questions---named ({\bf UB}), ({\bf SA}) and ({\bf PR}) below---and structure the paper around them. 

Let $R=\pos k\pi$, $k$ of characteristic zero,   and consider the $\cd$--module on ${\rm Spec}\, R[x]$ associated to the differential equation $y'(x)=\pi y(x)$.
Example 7.11 in \cite{DHdS15}, coming  from \cite{Andre},  shows that the  associated differential Galois group fails to be of finite type. In addition, and more importantly,   this failure is related to a simple construction---the Neron blowup of a formal subgroup---performed on  $\bb{G}_{m,R}$.  Assume now only that $R$ is complete with uniformizer $\pi$. The Neron blowup   produces,  starting from an affine flat group scheme of finite type $G$  and a flat closed formal subgroup $\g H$ of the completion $\wh G$, 
a new group   $\cn_{\g H}^\infty(G)$ having the same generic fibre as $G$. More precisely, yet still loosely speaking,  the ring of $\cn_{\g H}^\infty(G)$ is obtained from the ring of $G$ by adjoining rational functions of the form 
\[
\fr{\text{element of the ideal of $\g H\ot R/(\pi^{n})$}}{\pi^n}
\]
for varying $n$. See Section \ref{23.02.2017--3} and \cite[Section 5]{DHdS15}. It is then natural to  raise the following question.  

\vspace{.2cm}
\noindent(\textbf{UB}) ``Ubiquity of ``blowups'':  Assume that $R$ is complete and let $G$ be an affine and flat group scheme over $R$ whose generic fibre $G\ot K$ is of finite type. Is it the case that $G$ is the blowup of a formal subgroup   of some group scheme \emph{of finite type} over $R$? 
  
This is addressed in Section \ref{06.02.2017--1}, where we show that, if   $R$ in  ({\bf UB}) has equicharacteristic zero, then the answer is affirmative; see Corollary \ref{11.07.2017--2}. 
In fact, the particularity which gives these blowups such a prominent role is not exclusive of characteristic $(0,0)$, and has to do, as visible in Theorem \ref{20.02.2017--2}, with the ``constancy'' of the centres in the standard sequence (recalled in Section \ref{20.02.2017--1}). This property had already been detected in \cite[Section 6]{DHdS15}, and used to arrive at a less clear result than Theorem \ref{20.02.2017--2}.

Now, if Theorem \ref{20.02.2017--2} and Corollary \ref{11.07.2017--2} say that Neron blowups of formal subgroups are typical among affine and flat group schemes, nothing assures their relevance in {\it all} Tannakian theories over $R$. The task of detecting their ``presence'' in {\it simple instances}---we have in mind categories of representations of abstract groups and $\cd$-modules on {\it proper} schemes---comes thus to the front. (Here we understand ``presence'', at least in the case of $\cd$--modules, as ``being a full differential Galois group''. Note also  that the equation $y'=\pi y$ above assures that   
Neron blowups of formal subgroups are present in the category of $\cd$-modules on {\it affine} schemes.)
As argued below, this task is closely linked to a problem  raised in \cite[Question 4.1.4]{DH14} and a fundamental result concerning affine group schemes over a field \cite[3.3 and 14.1]{waterhouse}. We therefore prefer  to highlight  this point:

\vspace{.2cm}
\noindent  (\textbf{SA}) ``Strict pro-algebraicity'': Let $G$ be an affine and flat group scheme over $R$; is it possible to write $G$ as $\lip_iG_i$, where each  $G_j\to G_i$ is a \emph{faithfully flat} morphism of affine and  flat  group schemes \emph{of finite type}?

As hinted above, Neron blowups of formal subgroups provide   negative answers to ({\bf SA}): if $G=\lip_iG_i$ is as in ({\bf SA}), then each faithfully flat quotient $G\to H$ having a generic fibre of finite type must be noetherian, a property which these blowups can easily violate. (The proof of Corollary \ref{12.10.2018--2} contains details of the last argument.)
With Corollary \ref{12.10.2018--2} and Corollary \ref{23.02.2017--6} we 
show that, in case $R=\pos \CC t$, a certain $\cn_{\g H}^\infty(\GG_{a,R}\ti\GG_{m,R})$ appears  both in the theory of   representations of abstract groups ($\ZZ$ actually!)  and in the theory  of $\cd$-modules on {\it proper} schemes. This has the double effect of  extending the number of negative answers to ({\bf SA}) and increasing the relevance of blowing up formal subgroups. 

Finally, we address a   basic property of affine group schemes. 

\vspace{.2cm}
\noindent  (\textbf{PR})  ``Projectivity'': Let $G$ be an affine and flat group scheme over $R$. What conditions  should the category of representations $\rep{R}{G}$ enjoy so that $R[G]$ is projective as an $R$-module?

That projectivity as a module is indeed a basic property of group schemes and not simply a commutative algebraic one should be made clearer by pointing out its relation to the existence of  arbitrary  intersections in representations (Proposition \ref{16.10.2018--2}).
The analysis of ({\bf PR}) is the theme of Section \ref{07.07.2017--1} and the answer we offer, for a complete $R$, is named {\it prudence}, see Definition \ref{17.10.2016--4} and Theorem 
\ref{17.10.2016--6}. 
 When the category of representations in question is related to a category of modules on a scheme, the property of being prudent is linked to Grothendieck's existence theorem in formal geometry.  This permits us to apply our findings to verify that  differential Galois groups on {\it proper} $R$-schemes have rings of functions which are free $R$-modules, see Theorem \ref{prudence_galois}.

We end this introduction by summarizing the remaining sections of the paper while throwing light on results which serve to complement or support the ones described above.  
Section \ref{29.03.2017--1} exists mainly to establish notations and explain colloquially the operation of blowing up a formal subgroup scheme plus the concept of differential Galois group. 
Section \ref{06.02.2017--1}  addresses Question ({\bf UB}) and proves Theorem \ref{20.02.2017--2} and   Corollary \ref{11.07.2017--2}. In it the reader shall also find applications of Theorem \ref{20.02.2017--2} to unipotent group schemes in equicharacteristic $(0,0)$;   this particular situation allows to show that formal subgroups are algebraizable (Proposition \ref{11.07.2017--1}), something which will be put to good use once in possession of Theorem \ref{prudence_galois}, see Corollary \ref{16.10.2018--1}. 
Section \ref{abstract_groups} develops carefully the theory of the Tannakian envelope over $R$ of an abstract group (Definition \ref{22.02.2017--3}) and demonstrates Corollary \ref{12.10.2018--2}. 
Section \ref{24.02.2017--1} serves to show that on a smooth and proper $\pos\CC t$-scheme $X$, the theory of $\cd$-modules is tantamount to that of the representations of $\pi_1((X\ot \CC)^\an)$, see Theorem \ref{Riemann-Hilbert}. 
This result is then applied to establish Corollary \ref{23.02.2017--6}, highlighted above. 
Section \ref{07.07.2017--1} develops the concept of prudence (Definition \ref{17.10.2016--4}) to deal with ({\bf PR}), as already mentioned before. 
Other noteworthy points elaborated in Section \ref{07.07.2017--1} are a link between prudence and  a   property of representations appearing in \cite[Expos\'e ${\rm VI}_B$]{SGA3}---see Proposition \ref{16.10.2018--2}---, 
and, building up on material from Section \ref{06.02.2017--1}, a finiteness result for differential Galois groups with unipotent generic fibre, see Corollary \ref{16.10.2018--1}. 

\subsection*{Notations, conventions and standard terminology}\label{notations}

\begin{enumerate}\item The ring  $R$ is  a discrete valuation ring with quotient field $K$ and residue field $k$. An uniformizer is denoted by $\pi$, except in Section \ref{24.02.2017--1}, where we call it $t$ to avoid confusion. 
\item The spectrum of $R$ is denoted by $S$, and the quotient ring $R/(\pi^{n+1})$ is denoted by $R_n$. 
\item The characteristic of $R$ is the ordered pair $(\mathrm{char}.\,K,\mathrm{char.}\,k)$.
\item If $M$ is an $R$-submodule of $N$, we say that $M$ is \textbf{saturated} in $N$ if $N/M$ has no $\pi$-torsion. 
\item Given an object $X$ over $R$ (a scheme, a module, etc), we sometimes find useful to write $X_k$ instead of $X\ot_Rk$, $X_K$ instead of $X\ot_RK$, etc. If context prevents any misunderstanding, we also employ $X_n$ instead of $X\ot_RR_n$.  


\item To avoid repetitions, by a \textbf{group scheme} over some ring $A$, we understand an \textbf{affine group scheme} over $A$. 
\item The category of {\it flat} group schemes over a ring $A$ will be denoted by $(\mathbf{FGSch}/A)$. 
\item If $V$ is a free $R$-module of finite rank, we write $\mathbf{GL}(V)$ for the general linear group scheme representing $A\mapsto\mathrm{Aut}_A(V\ot A)$. If $V=R^n$, then $\mathbf{GL}(V)=\mathbf{GL}_{n}$.  
\item 
If $G$ is  a group scheme over $R$, we let $\mathrm{Rep}_R(G)$ stand for the category of representations of $G$ which are, as $R$-modules,  \emph{of finite type} over $R$. (We adopt Jantzen's  and Waterhouse's definition of representation. See \cite[Part I,  2.7 and 2.8, 29ff]{jantzen} and \cite[3.1-2, 21ff]{waterhouse}.)  The full subcategory of $\rep RG$ having as objects those underlying a free $R$-module is denoted by $\repp RG$. (In \cite{DHdS15} we employed the more confusing notation $\rep RG^o$.)
\item
Given an arrow $\rho:G\to H$ of group schemes over $R$, we let $\rho^\#:\rep RH\to\rep RG$ stand for the associated functor. 

\item If $\Ga$ is an abstract group and $A$ is a commutative ring, we  write $A\Ga$ for the group ring of $\Ga$ with coefficients in $A$. We let $\rep{A}{\Ga}$ stand for the category of left $A\Ga$-modules whose underlying $A$-module is of finite type. The full subcategory of $\rep{A}{\Ga}$ whose objects are moreover projective $A$-modules shall be denoted by 
$\repp{A}{\Ga}$. 

\item For an affine scheme $X$ over $R$, we call $\co(X)$ the ring of functions of $X$, and denote it by    $R[X]$.  More generally, if $A$ is any $R$-algebra, we write $A[X]$ to denote $\mathcal O(X\ot_R A)$.

\item If $G\in(\bb{FGSch}/R)$, its right-regular module is obtained by letting $G$ act on $R[G]$ via right translations. It is denoted by $R[G]_{\rm right}$ below. 
\item For an affine $R$-adic formal  scheme $\g X$, we write $R\langle\g X\rangle$ for the ring $\mathcal O(\g X)$. If $\g X$ is the $\pi$-adic completion (or completion along the closed fibre) of some affine $R$-scheme $X$, we write $R\langle X\rangle$ or $R[X]^\wedge$ to denote $R\langle\g X\rangle$.   

\item A formal group scheme over $R$ is a group object in the category of affine formal $R$-adic schemes. 

\item If $X$ is a ringed space, we let $|X|$ stand for the its underlying topological space. 

\item If $G$ is a group scheme over a ring $A$, and $\g a\subset R[G]$ is the augmentation ideal, we define the conormal module $\om(G/A)$ as being the $A$-module $\g a/\g a^2$ \cite[II.4.3.4]{DG}. Its dual $A$-module  is the Lie algebra ${\rm Lie}(G)$ of $G$ (see II.4.3.6 and II.4.4.8 of \cite{DG}). Similar conventions are in force when dealing with  formal affine group schemes.  

\item By a  $\ot$-category or tensor-category we mean a $\ot$-category  ACU in the sense of \cite[I.2.4, 38ff]{saavedra}, or a tensor category in the sense of \cite[Definition 1.1, p.105]{DM82}. Unless  mentioned otherwise, all $\ot$-functors are  ACU \cite[I.4.2]{saavedra} and all $\ot$-natural transformation are unital (which are the conventions of \cite[Section 1]{DM82}).  
\end{enumerate}
\subsection*{Acknowledgements} We thank heartily the referee for a careful reading and for many comments which improved the structure of this work.

\section{Preliminary material}\label{29.03.2017--1}
For the convenience of the reader  we briefly recall some thoughts from \cite{DHdS15} which are employed here. More detailed explanations are to be found in \cite{DHdS15} and its references.

\subsection{The standard sequences \cite[Section 2]{DHdS15}, \cite[Section 1]{WW}}\label{20.02.2017--1}

Let 
\[\rho:G'\aro G\]
be an arrow of $(\bb{FGSch}/R)$ 
 inducing an isomorphism on generic fibres. 
We then associate to $\rho$ its \emph{standard sequence}: 
\[\xymatrix{&G'\ar[d]_{\rho_{n+1}}\ar[drr]^{\rho_0=\rho}&&
\\
\cdots\ar[r] & G_{n+1}\ar[r]_-{\ph_n} &  \cdots\ar[r]_-{\ph_0}&  G_0=G}\]
as follows. 
The arrow $\ph_0:G_1\to G_0$ is the Neron blowup of $B_0:=\mathrm{Im}(\rho_0\ot k)$, and $\rho_1$ is the morphism obtained by the universal property. Now,  if $\rho_n:G'\to G_n$ is defined, then $\ph_n:G_{n+1}\to G_n$ is the Neron blowup of 
\[
B_n:=\mathrm{Im}(\rho_n\ot k)
\]
and $\rho_{n+1}$ is once more derived from the universal property.  
In particular, for each $n$, the morphisms $\rho_n\ot k:G'\ot k\to B_n$ and  $\ph_n:B_{n+1}\to B_{n}$   are faithfully flat because of \cite[Theorem 14.1]{waterhouse}.
In \cite[Theorem 2.11]{DHdS15}, it is proved that the morphism \[G'\aro\lip_n G_n\]
obtained from the above commutative diagram is an isomorphism of group schemes.

\subsection{Blowing up formal subgroup schemes \cite[Section 5]{DHdS15}}\label{23.02.2017--3} We assume that $R$ is \emph{complete}. 
Let $G$ be a group scheme over $R$ which is flat and of finite type. Denote by $\wh G$ its  $\pi$-adic completion (or completion along the closed fibre): it is a group object in the category of $R$-adic affine formal schemes. 
Let $\g H\subset\wh G$ be a closed, \emph{flat}  formal subgroup scheme and let $I_n\subset R[G]$ be the ideal of  $H_n:=\g H\ot R_n$; note that $\pi^{n+1}\in I_n$ and that $R\langle\g H\rangle=\lip_nR_n[G]/I_nR_n[G]$ (since all ideals in $R\langle G\rangle$ are closed).  We then define the Neron blowup of $\g H$, $\cn_{\g H}^\infty(G)$, as being the group scheme whose Hopf algebra, viewed as a subring of $K[G]$, is 
\[
\lid_n R[G][\pi^{-n-1}I_n].
\]
By definition, $\cn_{\g H}^\infty(G)$ lies in $(\bb{FGSch}/R)$; it usually fails to be  of finite type over $R$, and is the source of a morphism of group schemes $\cn_{\g H}^\infty(G)\to G$  inducing an isomorphism on generic fibres. One fundamental feature of $\cn_{\g H}^\infty(G)$ which is worth mentioning here is that $\cn_{\g H}^\infty(G)\ot R_n\to G\ot R_n$ induces an isomorphism between the source and $H_n$ \cite[Corollary 5.11]{DHdS15}. 

Let us profit to observe that $\cn_{\g H}^\infty(G)$ represents the  sub-functor of ${\rm Hom}_R(R[G],-)$ given by  
\[
\left\{\text{$R$-flat algebras}\right\}\aro {\rm (Groups)},\]
\[A\longmapsto\left\{g\in{\rm Hom}_R(R[G],A)\,:\,\begin{array}{c}\text{$\hat g:R\langle G\rangle\to \hat A$}\\\text{ factors through $R\langle\g H\rangle$ }\end{array}\right\}.
\]

If $H\subset G$ is a closed, flat subgroup scheme, we let $\cn_H^\infty(G)$ stand for the Neron blowup of the (necessarily $R$-flat) closed subgroup $\widehat H\subset \wh G$. It is a simple matter to verify that $R[\cn_H^\infty(G)]=\lid_n R[G][\pi^{-n-1}I]$, where $I$ is the ideal of $H$. 

\begin{ex}[See {\cite[Example 1.1]{DHdS15}}]\label{10.07.2014--1} 
Consider the pro-system of affine group schemes
\begin{equation}\label{24.09.2018--1}
\ldots\longrightarrow \GG_{a,R}\xrightarrow{\times \pi}\GG_{a,R}\xrightarrow{\times \pi}\GG_{a,R},
\end{equation}
which corresponds, on the level of rings, to the inductive system 
\begin{equation}\label{24.09.2018--2}
R[x_0]\longrightarrow R[x_1]\longrightarrow\ldots,\qquad x_i\longmapsto \pi x_{i+1}.
\end{equation}
The limit of diagram \eqref{24.09.2018--1} is simply $\cn_{\{e\}}^\infty (\GG_a)$ and the associated ring, the colimit of  \eqref{24.09.2018--2}, is   $\left\{ P\in K[x_0]\, | \,P(0)\in R\right\}$. 
Note that the $R$-module $R[G]$, being isomorphic to $R\oplus K\oplus K\oplus\cdots$, is \emph{not} projective and that $G\otimes R_n$ is always the trivial group scheme over $R_n$. 
\end{ex}

\subsection{Differential Galois groups  \cite[Section 7]{DHdS15}}\label{22.08.2020--1}

Let $X$ be a smooth $R$-scheme and   $\cd(X/R)$ be the ring of $R$--linear differential operators on $X$, see \ega{IV}{4}{16.8} or \cite[\S2]{BO}. The existence of a morphism of rings $\co_X\to\cd(X/R)$ allows us to see any left $\cd(X/R)$-module as an $\co_X$-module and we let $\modules{\cd(X/R)}$ stand for the category of left $\cd(X/R)$-modules which, when regarded as $\co_X$-modules, are coherent.  As is well-known, in case $R$ is a $\QQ$-algebra, the category $\modules{\cd(X/R)}$ is equivalent to the category of integrable connections \cite[Theorem 2.15]{BO}. 

The category $\modules{\cd(X/R)}$ possess a natural tensor product which, on the level of coherent sheaves, is simply the standard tensor product. In addition, an object $\cm\in\modules{\cd(X/R)}$
is locally free (as an $\co_X$-module) if and only if it is free of $\pi$--torsion. As in \cite[Section 7]{DHdS15}, the full subcategory of $\modules{\cd(X/R)}$ formed by those objects which are locally free as $\co_X$-modules will be  denoted by $\modules{\cd(X/R)}^\circ$.
These facts allow us to apply techniques from Tannakian categories.

We now suppose that $X$ has geometrically connected fibres over $R$ and is endowed with an $R$-point $x_0$. Let $\cm\in\modules{\cd(X/R)}^\circ$ be given; for $a,b\in\NN$, we  put $\bb T^{a,b}\cm=\cm^{\ot a}\ot \cm^{\vee\ot b}$ (here $\cm^\vee$ is the dual of $\cm$). The full subcategory of $\modules {\cd(X/R)}$ whose objects are subquotients (= quotients of subobjects) of direct sums of the form $\bigoplus_i\bb T^{a_i,b_i}\cm$ is denoted 
$\langle\cm\rangle_\ot$. The functor ``fibre at $x_0$'', 
\[x_0^*:\langle\cm\rangle_\ot\aro\modules R,\]
produces, by Tannkian duality (cf. \cite[II.4.1, 152ff]{saavedra} or \cite[Theorem 1.2.6]{DH14}), a flat group scheme $\mm{Gal}'(\cm)$ and an equivalence 
\[\ov x_0^*:
\langle\cm\rangle_\ot\ \arou\sim\rep R{\mm{Gal}'(\cm)} 
\]
of $R$-linear tensor categories. The group 
$\mm{Gal}'(\cm)$ is called the full differential Galois group of $\cm$. Contrary to the case of a field, the natural representation 
\[
\rho:\mm{Gal}'(\cm)\aro \bb{GL}(x_0^*\cm)
\]
obtained from   $\ov x^*_0$ is not necessarily a closed immersion. On the other hand it is possible to factor $\rho
$ as 
\[
\mm{Gal}'(\cm)\arou{\si}  \mm{Gal}(\cm)\arou\tau \bb{GL}(x_0^*\cm),
\]
where $ \mm{Gal}(\cm)$ is a flat group scheme called the restricted differential Galois group. The arrow $\si$ induces an isomorphism of generic fibres  and 
$\tau$ is a closed immersion.

\section{The ubiquity of blowups of formal subgroups}\label{06.02.2017--1} In this section, we assume that $R$ is \emph{complete}. Let $G'\in(\bb{FGSch}/R)$ have a generic fibre $G'\ot K$ of finite type (over $K$). This being so, there exists some $G\in(\bb{FGSch}/R)$, now of finite type over $R$, which is the target of a morphism 
\[\rho:G'\aro G\]
inducing an isomorphism on generic fibres.  Then, as explained on Section \ref{20.02.2017--1}, we have the standard sequence of $\rho$:
\[\xymatrix{&G'\ar[d]_{\rho_{n+1}}\ar[drr]^{\rho_0=\rho}&&
\\
\cdots\ar[r] & G_{n+1}\ar[r]_-{\ph_n} &  \cdots\ar[r]_-{\ph_0}&  G_0=G,}\]
where each $G_n$ is of finite type over $R$ (by construction). 
To state Theorem \ref{20.02.2017--2} below, we let  
\[\begin{split}
B_n&= \text{image of  $\rho_n\ot k:G'\ot k\to G_n\ot k$}
\\&=\text{centre of $\ph_n:G_{n+1}\to G_n$}.
\end{split}\]

\begin{thm}\label{20.02.2017--2}\begin{enumerate}\item Assume that for all  $n$, the morphisms $\ph_n:B_{n+1}\to B_n$ are isomorphisms. Then
\begin{enumerate}\item  the morphism induced between $\pi$-adic completions  
$R\langle G\rangle \to R\langle G'\rangle$
is surjective.  
\item Let $\g H$ stand for the closed formal subgroup scheme of $\wh G$ cut out by the kernel of $R\langle G\rangle \to R\langle G'\rangle$. (Clearly $\wh G'\simeq\g H$.) Then there exists an isomorphism of group schemes 
\[\si:G'\arou\sim \cn_{\g H}^\infty(G)\]
rendering  
\[\xymatrix{
G'\ar[r]^-{\si}\ar[dr]_\rho & \cn_{\g H}^\infty(G)\ar[d] \\ & G  }\]
commutative.  
\end{enumerate}
\item Assume that the characteristic of $k$ is zero. Then, there exists $n_0\in\NN$ such that, for all $n\ge n_0$, the arrows $\ph_{n}:B_{n+1}\to B_n$
are isomorphisms. 
\end{enumerate}
\end{thm}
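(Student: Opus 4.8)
The plan is to extract a monotone, bounded numerical invariant from the tower $\cdots\to B_{n+1}\xrightarrow{\ph_n}B_n\to\cdots$ and then to upgrade its stabilisation into genuine isomorphisms using the special shape of the fibres produced by a Neron blowup. First I would observe that, since $\rho$ is an isomorphism over $K$ and each $G_n$ is $R$-flat and of finite type, $\pi$ is a nonzerodivisor on $R[G_n]$ and so every fibre of $G_n$ has the same dimension $d:=\dim(G\ot K)$. As $B_n$ is a closed subgroup scheme of $G_n\ot k$ this yields the uniform bound $\dim B_n\le d$. Since $\ph_n:B_{n+1}\to B_n$ is faithfully flat (hence surjective), one has $\dim B_{n+1}\ge \dim B_n$; thus $(\dim B_n)_n$ is non-decreasing and bounded by $d$, so there is an $n_0$ with $\dim B_{n+1}=\dim B_n$ for all $n\ge n_0$. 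This part is characteristic-free.

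The decisive input, and the point where $\mm{char}\,k=0$ enters, is the identification of $\ker\ph_n$. Recall from the theory of Neron blowups (see \cite{WW} and \cite[Section 5]{DHdS15}) that, $G_n\ot k$ being smooth by Cartier's theorem, the special fibre of the blowup $\ph_n:G_{n+1}\to G_n$ along $B_n$ sits in an exact sequence
\[
1\aro V_n\aro G_{n+1}\ot k\aro B_n\aro 1,
\]
in which $V_n\cong {\rm Lie}(G_n\ot k)/{\rm Lie}(B_n)$ is a vector group. Now $B_{n+1}=\mm{Im}(\rho_{n+1}\ot k)$ maps onto $B_n$ under the right-hand arrow (this is precisely $\ph_n$), so the kernel of $\ph_n:B_{n+1}\to B_n$ is exactly $B_{n+1}\cap V_n$, a closed subgroup scheme of the unipotent group $V_n$.

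Here I would invoke characteristic zero: a unipotent group scheme over $k$ admits no nontrivial finite subgroup scheme (finite group schemes are étale, and étale unipotent groups are trivial), so every closed subgroup scheme of the vector group $V_n$ is connected. Hence $\ker\ph_n=B_{n+1}\cap V_n$ is connected, and therefore trivial as soon as it has dimension zero, i.e. as soon as $\dim B_{n+1}=\dim B_n$. For such an $n$ the morphism $\ph_n$ is faithfully flat with trivial kernel, hence an isomorphism. Combining this with the stabilisation of $\dim B_n$ from the first paragraph, $\ph_n$ is an isomorphism for every $n\ge n_0$, which is the assertion.

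I expect the main obstacle to be the second paragraph: pinning down (or correctly citing) that the subgroup created by the blowup on the special fibre is a \emph{vector group}, and in particular unipotent. This is what turns mere stabilisation of dimensions into honest isomorphisms; without the unipotence of $V_n$ the equal-dimension argument would only produce isogenies $B_{n+1}\to B_n$, since a nontrivial étale kernel could a priori persist. Everything else—the dimension bound, the monotonicity, and the final passage from ``trivial kernel'' to ``isomorphism''—is routine in characteristic zero.
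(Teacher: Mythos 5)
Your argument addresses only item (2) of the theorem. For that item it is correct and in fact follows the paper's own proof almost step by step: the uniform bound $\dim B_n\le\dim G\ot K$ via constancy of fibre dimensions for flat finite-type group schemes (\cite[${\rm VI}_B$, Corollary 4.3]{SGA3} in the paper), stabilisation of the dimensions, identification of $\mathrm{Ker}(\ph_n|_{B_{n+1}})$ as a closed subgroup of the vector group $\mathrm{Ker}(\ph_n\ot k)\simeq\GG_{a,k}^{c_n}$ created by the blowup, and the characteristic-zero fact that such a subgroup is either trivial or of positive dimension (\cite[Proposition IV.2.4.1]{DG}). The paper then concludes via ``trivial kernel $\Rightarrow$ closed immersion $\Rightarrow$ isomorphism'' using \cite[${\rm VI}_B$, Corollary 1.4.2]{SGA3}, which is the step you compress into ``faithfully flat with trivial kernel, hence an isomorphism''; that compression is harmless.

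The genuine gap is that items (1a) and (1b) --- the structural heart of the theorem, and the part for which the hypothesis ``all $\ph_n:B_{n+1}\to B_n$ are isomorphisms'' exists --- are never addressed. For (1a) one needs the lemma that the natural maps $G'\ot k\to B_n$ induce an isomorphism $G'\ot k\simeq\lip_n B_n$ (this follows from $G'\simeq\lip_n G_n$ of \cite[Theorem 2.11]{DHdS15} together with the factorisation $k[B_n]\hookrightarrow k[G_{n+1}]$ built into the standard sequence); under the constancy hypothesis this gives $G'\ot k\simeq B_0$, hence $k[G]\to k[G']$ is surjective, hence so is $R\langle G\rangle\to R\langle G'\rangle$ after completion. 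For (1b) one must actually produce the isomorphism $\si$: writing $I_n\subset R[G]$ for the ideal of the closed immersion $G'\ot R_n\to G$, one checks that $R[G]\to R[G']$ carries $I_n$ into $\pi^{n+1}R[G']$, so that $\rho$ factors through $\cn_{\g H}^\infty(G)\to G$; one then verifies that $G'\to\cn_{\g H}^\infty(G)$ is an isomorphism on the special fibre (via \cite[Corollary 5.11]{DHdS15}) and on the generic fibre (trivially), and concludes by the two-fibre criterion \cite[Lemma 1.3]{WW}. None of this is a formal consequence of your part-(2) argument; it is an independent construction, and without it the theorem is at best half proved.
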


The proof shall need  
\begin{lem}\label{20.02.2017--3}The obvious morphisms $G'\ot k\to B_n$ induce an isomorphism 
\[
G'\ot k\arou{\sim}\lip_nB_n.
\]  
\end{lem}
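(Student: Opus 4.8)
The plan is to pass to the Hopf algebras over $k$ and reduce the assertion to the statement that an increasing union of sub-Hopf-algebras exhausts the whole algebra. Write $A=\co(G'\ot k)$, $A_n=\co(G_n\ot k)$ and $C_n=\co(B_n)$. Since $\rho_n\ot k\colon G'\ot k\to B_n$ is faithfully flat, the comorphism $C_n\to A$ is faithfully flat, hence injective, and we may regard each $C_n$ as a subalgebra of $A$; likewise the faithful flatness of $\ph_n\colon B_{n+1}\to B_n$ makes $C_n\to C_{n+1}$ injective. The morphisms $G'\ot k\to B_n$ are compatible with the $\ph_n$ (because $\rho_n=\ph_n\circ\rho_{n+1}$), so they induce a map $G'\ot k\to\lip_nB_n$, which dually is the map $\lid_nC_n=\bigcup_nC_n\hookrightarrow A$. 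Thus the whole lemma amounts to the equality $\bigcup_nC_n=A$.

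To establish this I would feed in the structural result recalled in Section \ref{20.02.2017--1}, namely the isomorphism $G'\arou{\sim}\lip_nG_n$ of \cite[Theorem 2.11]{DHdS15}. Taking rings of functions turns this inverse limit into the filtered colimit $R[G']\cong\lid_nR[G_n]$, and since $-\ot_Rk$ commutes with filtered colimits we obtain $A\cong\lid_nA_n$, the transition maps being the comorphisms of the $\ph_n\ot k$. The key computation is then to identify, for each $n$, the image of the structural map $A_n\to A$ with $C_n$. This image is exactly the scheme-theoretic image of $\rho_n\ot k$: the factorisation $G'\ot k\twoheadrightarrow B_n\hookrightarrow G_n\ot k$ dualises to $A_n\twoheadrightarrow C_n\hookrightarrow A$, so that $\mathrm{Im}(A_n\to A)=C_n$ inside $A$.

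Granting this, the conclusion is immediate: in a filtered colimit every element of $A\cong\lid_nA_n$ lies in the image of some $A_n$, whence $A=\bigcup_n\mathrm{Im}(A_n\to A)=\bigcup_nC_n$. Therefore $\lid_nC_n\to A$ is an isomorphism, i.e. $\co(\lip_nB_n)\arou{\sim}\co(G'\ot k)$, which is precisely the asserted isomorphism $G'\ot k\arou{\sim}\lip_nB_n$.

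I do not expect a serious obstacle here; the argument is essentially formal once \cite[Theorem 2.11]{DHdS15} is in hand. The one point deserving care is the identification $\mathrm{Im}(A_n\to A)=C_n$: it relies on $B_n$ being the scheme-theoretic image of $\rho_n\ot k$ (equivalently, on the factorisation of $\rho_n\ot k$ through a faithfully flat surjection onto $B_n$, which over the field $k$ is guaranteed by \cite[Theorem 14.1]{waterhouse}) together with the compatibility of these images with the transition maps $C_n\to C_{n+1}$. Checking that the comorphisms are genuinely compatible, so that the $C_n$ form a directed system of subalgebras of $A$ whose union is the colimit, is the only bookkeeping that needs to be carried out with some attention.
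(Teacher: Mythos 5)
Your proposal is correct and takes essentially the same route as the paper: both pass to Hopf algebras, invoke $G'\simeq\varprojlim_nG_n$ (\cite[Theorem 2.11]{DHdS15}) together with the commutation of $-\otimes_Rk$ with filtered colimits, and conclude by identifying $\varinjlim_nk[G_n]$ with $\varinjlim_nk[B_n]$ via the factorization of $\rho_n\otimes k$ as a faithfully flat morphism followed by a closed immersion. The only difference is bookkeeping in that final identification: the paper intertwines the two directed systems using the diagonal arrows $k[B_n]\to k[G_{n+1}]$ (the special fibre of a Neron blowup maps into its centre), whereas you identify $k[B_n]$ with the image of $k[G_n]\to k[G']$ and use that a filtered colimit is the union of these images---both mechanisms are sound.
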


\begin{proof}
One considers the commutative diagram of $k$-algebras obtained from the construction of the standard sequence
\[
\xymatrix{  \cdots\ar[r]  &\ar@{->>}[d]k[G_{n-1}]\ar[r]& k[G_n]\ar[r]\ar@{->>}[d]\ar@{->>}[d] & k[G_{n+1}]\ar@{->>}[d] \ar[r]&\cdots \\
\ar[r] \cdots &k[B_{n-1}]\ar[ru]\ar@{^{(}->}[r]  & k[B_n]\ar@{^{(}->}[r] \ar[ru] & k[B_{n+1}]\ar[r] &\cdots   }   
\]
(As usual, $\xymatrix{\ar@{^{(}->}[r]&}$ means injection, and $\xymatrix{\ar@{->>}[r]&}$ surjection.)
It is then clear that 
\[\lid_nk[G_n]\aro \lid_nk[B_n]\]
is an isomorphism. As explained in Section \ref{20.02.2017--1},  the canonical arrow $G'\to\lip G_n$ induces an isomorphism. Since taking tensor products commutes with direct limits \cite[Theorem A.1]{matsumura}, the result follows.    
\end{proof}

\begin{proof}[Proof of Theorem \ref{20.02.2017--2}](1a) The  morphism $G'\ot k\to G\ot k$ induces an isomorphism between $G'\ot k$ and $B_0$ because of the assumption and  of Lemma \ref{20.02.2017--3}. Hence, the arrow between $k$-algebras 
$k[G]\to k[G']$ is surjective. As is well-known (see for example \cite[10.23(ii), p. 112]{AM}),  the induced arrow between completions $R\langle G\rangle\to R\langle G'\rangle$ is then surjective.  

(1b) Let $I_n\subset R[G]$ be the ideal of the closed immersion $G'\ot R_n\to G\ot R_n\to G$ (in particular $\pi^{n+1}\in I_n$). Writing $\cn$ instead of $\cn_{\g H}^\infty(G)$, the definition says that  \[R[\cn]=\bigcup_nR[G][\pi^{-n-1}I_n].\] Since, by construction, the   morphism of $R$-algebras $R[G]\to R[G']$ sends $I_n$ to the ideal $\pi^{n+1}R[G']$, we conclude that $R[G]\to R[G']$ factors thought $R[\cn]$, that is,   $\rho:G'\to G$ factors through $\cn\to G$.  
Using that $\cn \ot k\to G\ot k$ 
induces an isomorphism between $\cn \ot k$ and $\g H\ot k$ \cite[Corollary 5.11]{DHdS15}, we conclude that $G'\ot k\to\cn\ot k$ is also an isomorphism. Together with the fact that $G'\ot K\to\cn\ot K$ is an isomorphism, this easily implies that $G'\to\cn$ is an isomorphism \cite[Lemma 1.3]{WW}.

(2) As   already remarked in Section \ref{20.02.2017--1}, the morphisms 
\[\ph_n|_{B_{n+1}}:B_{n+1}\aro B_n\]
are all faithfully flat. 
It then follows from \cite[${\rm VI}_B$, Proposition 1.2, p.335]{SGA3} that 
\[\tag{$*$}
\dim B_{n+1}= \dim B_n+\dim \mathrm{Ker}(\ph_n|_{B_{n+1}}).
\] 
Using the equality 
$\dim G_n\ot k=\dim G_n\ot K$  \cite[$\mathrm{VI}_B$, Corollary 4.3, p.358]{SGA3}, we conclude that 
\[\begin{split}
\dim B_n&\le \dim G_n\ot k
\\
&=\dim G_n\ot K
\\
&=\dim G\ot K.
\end{split}\]
Let $n_0\in\NN$ be such that $\dim B_{n_0}$ is maximal. We then derive from equation ($*$) above that for all $n\ge n_0$, 
\[\tag{$**$}
\dim \mathrm{Ker}(\ph_n|_{B_{n+1}})=0. 
\]

Because of the assumption on the characteristic, 
we know that $B_n$ and $G_n\ot k$ are \emph{smooth}     $k$-schemes, so that $B_n\ot k\to G_n\ot k$ is a regular immersion [EGA $\mathrm{IV}_4$, 17.12.1, p.85]. Now, as explained in \cite[Section 2.2]{DHdS15}, the group scheme   
$\mathrm{Ker}\,(\ph_n\ot k)$
is isomorphic to a vector group $\GG_{a,k}^{c_n}$ for some integer $c_n\ge0$. Consequently, $\mathrm{Ker}(\ph_n|_{B_{n+1}})$ is isomorphic to a closed subgroup scheme of $\GG_{a,k}^{c_n}$; the hypothesis on the characteristic then implies that  $\mathrm{Ker}(\ph_n|_{B_{n+1}})$ is either trivial or positive dimensional (by \cite[Proposition IV.2.4.1]{DG}, say). 
Equation ($**$) then shows that $\mathrm{Ker}(\ph_n|_{B_{n+1}})=\{e\}$ if $n\ge n_0$. We have therefore proved that $\ph_n|_{B_{n+1}}$ is a closed immersion \cite[${\rm VI}_B$, Corollary 1.4.2, p.341]{SGA3}, and consequently an isomorphism if $n\ge n_0$.
\end{proof}

\begin{cor}\label{11.07.2017--2} If $k$ is of characteristic zero, there exists $n_0\in\NN$ and a formal  closed flat subgroup $\g H\subset\wh G_{n_0}$ such that  $G'\simeq\cn_{\g H}^\infty(G_{n_0})$.\qed
\end{cor}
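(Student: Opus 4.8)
The plan is to reduce everything to Theorem~\ref{20.02.2017--2} by truncating the standard sequence of $\rho$. First I would invoke part~(2) of that theorem: since $k$ has characteristic zero, there is some $n_0\in\NN$ such that the arrows $\ph_n|_{B_{n+1}}:B_{n+1}\to B_n$ are isomorphisms for every $n\ge n_0$. Note that $G_{n_0}$ is of finite type over $R$, being obtained from $G$ by finitely many Neron blowups along closed subgroups of special fibres, and that the morphism $\rho_{n_0}:G'\to G_{n_0}$ furnished by the standard sequence still induces an isomorphism on generic fibres, because $\rho=\ph_0\circ\cdots\circ\ph_{n_0-1}\circ\rho_{n_0}$ and each factor is a generic isomorphism. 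Thus $\rho_{n_0}$ is an arrow to which the whole machinery of Section~\ref{20.02.2017--1} applies.

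The key observation is that the standard sequence of $\rho_{n_0}:G'\to G_{n_0}$ is nothing but the tail of the standard sequence of $\rho$. Indeed, the recursive construction recalled in Section~\ref{20.02.2017--1} depends only on the starting arrow: beginning from $G_{n_0}$, the first blowup is taken along $\mathrm{Im}(\rho_{n_0}\ot k)=B_{n_0}$, which by definition is the centre of $\ph_{n_0}$, so the first term of the new sequence is again $G_{n_0+1}$ with the same connecting morphism; by induction its $m$-th term is $G_{n_0+m}$ and its $m$-th centre is $B_{n_0+m}$. I would spell this identification out carefully, since it is the one point of the argument that is not a bare citation.

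Granting this, the centres of the standard sequence of $\rho_{n_0}$ are $B_{n_0},B_{n_0+1},\dots$, and by the choice of $n_0$ all the induced maps between consecutive centres are isomorphisms. Hence the truncated sequence satisfies \emph{exactly} the hypothesis of part~(1) of Theorem~\ref{20.02.2017--2}. Applying that part to $\rho_{n_0}:G'\to G_{n_0}$ then yields a closed flat formal subgroup $\g H\subset\wh G_{n_0}$, cut out by the kernel of $R\langle G_{n_0}\rangle\to R\langle G'\rangle$, together with an isomorphism $G'\arou{\sim}\cn_{\g H}^\infty(G_{n_0})$ over $G_{n_0}$, which is the desired conclusion.

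The main obstacle is precisely the middle step: confirming that restarting the standard-sequence construction at $G_{n_0}$ genuinely reproduces the original terms $G_{n_0+m}$ together with their centres $B_{n_0+m}$, so that part~(2) can feed directly into part~(1). Once this compatibility of the two constructions is secured, the corollary is an immediate consequence of the two halves of Theorem~\ref{20.02.2017--2}.
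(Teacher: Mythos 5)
Your proposal is correct and is precisely the argument the paper intends: the corollary is stated with a \qed as an immediate consequence of Theorem \ref{20.02.2017--2}, obtained by using part (2) to find $n_0$, observing that the standard sequence of $\rho_{n_0}:G'\to G_{n_0}$ is the tail of the standard sequence of $\rho$, and then applying part (1). Your careful verification that restarting the construction at $G_{n_0}$ reproduces the terms $G_{n_0+m}$ and centres $B_{n_0+m}$ is exactly the compatibility the paper leaves implicit.
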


\begin{rmk}Statement (2) in Theorem \ref{20.02.2017--2} fails if $k$ is allowed to have characteristic $p>0$ as the following construction shows. Let $k$ and $K$ be of  characteristic $p$ (the case of mixed characteristic will be considered elsewhere).  Let  $A_0= R[x_0]$ be a polynomial algebra in one variable. As in Example \ref{10.07.2014--1}, we introduce $\tilde A=\{P\in K[x_0]\,:\,P(0)\in R\}$ and define 
\[
\De:\tilde A\aro\tilde A\ot_R\tilde A,\quad \ep:\tilde A\to R ,\quad\text{and}\quad \si:\tilde A\aro\tilde A
\]
by requiring that $\De x_0=x_0\ot1+1\ot x_0$, $\ep x_0=0$ and  $\si(x_0)=-x_0$. These morphisms give $A_0$ and $\tilde A$ the structure of a commutative Hopf algebra over $R$. (Clearly the group scheme ${\rm Spec}\,A_0$ is just $\GG_{a,R}$.) 
We construct inductively elements $x_n$ of $\tilde A$ by putting $x_{n+1}=\pi^{-1}(x_n^p-x_n)$ and then write $A_n:=R[x_0,\ldots,x_n]$, which is an $R$-subalgebra of $\tilde A$. A simple argument shows that
\[
\De x_n=x_n\ot1+1\ot x_n,\quad\ep(x_n)=0,\quad\text{and}\quad \si(x_n)=-x_n. 
\]
In particular, $\De(A_n)\subset A_n\ot A_n$ and $\si(A_n)\subset A_n$. 
Let $G_n$ be ${\rm Spec}\, A_n$ and give it the structure of a group scheme arising from $\De$, $\ep$ and $\si$. It follows that $\{x_n^p-x_n=0\}$ cuts out a closed subgroup scheme $B_n$ of $G_{n}\ot k$, and that the natural arrow $G_{n+1}\to G_n$ is simply the blowup of $B_n$. Proceeding by induction, one shows that $A_{n+1}=A_n[T]/(\pi  T-(x_n^p-x_n))$ and that the natural arrow $k[B_n]\to k[G_{n+1}]$ identifies $k[G_{n+1}]$ with a polynomial algebra in one variable over $k[B_n]$. (Here we apply the description of the blowup of an ideal generated by a regular sequence given in [BA-X, Theorem 1, \S9, no.7].) Consequently,  $B_{n+1}\to B_n$ is faithfully flat and has a non-trivial kernel, which is isomorphic to $\ZZ/p\ZZ$.
\end{rmk}

We now wish to derive from Theorem \ref{20.02.2017--2} some consequences concerning unipotent group schemes over $R$ of characteristic $(0,0)$; all hinges on the well-known principle that, in this case, Lie subalgebras are ``algebraic.'' 
Recall that $U\in(\bb{FGSch}/R)$ is called \emph{unipotent} if $U\ot K$, respectively $U\ot k$, is a unipotent group scheme over $K$, respectively $k$. 
(It should be noted that in certain cases it is sufficient to require this condition only for $K$. See for example Theorem 2.11 of \cite{tong}.) 

Let us prepare the terrain: Assume  that $R$ has characteristic $(0,0)$ and  that  $U\in(\bb{FGSch}/R)$  is unipotent and of finite type; we also abbreviate $\om_U=\om(U/R)$. A most fundamental tool for studying $U$ is the exponential morphism 
\[
\exp:{\rm Lie}(U)_a\aro U
\]
which is explained in terms of functors in \cite[II.6.3.1, 264ff]{DG}. (In loc.cit. the authors work over a base field, but this assumption is unnecessary for the construction.) Endowing ${\rm Lie}(U)_a$ with its BCH multiplication, $\exp$ becomes an \emph{isomorphism} of group schemes \cite[Section 1.3]{tong}. 
Since we wish to deal with formal group schemes as well, it is convenient to elaborate on a purely algebraic expression for $\exp^\#:R[U]\to\bb S(\om_U)$. (Recall that ${\rm Lie}(U)_a={\rm Spec}\,\bb S(\om_U)$.)

Write $D:R[U]\to\om_U$ for the canonical $\ep$-derivation; this gives rise to an $\ep$-derivation, denoted likewise, $D:R[U]\to\bb S(\om_U)$ into the symmetric $R$-algebra. For each $n>0$, we write $D^n:R[U]\to\bb S(\om_U)$ for the convolution product of $D$ with itself $n$ times. In terms of Sweedler's notation, this reads 
\begin{equation}\label{convolution}
D^n(f)=\sum_{(f)}D(f_{(1)})\cdots D(f_{(n)}),\qquad f\in R[U].
\end{equation}
Then, agreeing to fix $D^0(f)=\ep(f)\po1$, we define 
\[
\eta_t(f):=\sum_{n=0}^\infty\fr{D^n}{n!}(f)t^n,\quad f\in R[U].  
\](This is an element of $\pos{\bb S(\om)}t$.)
Leibniz' formula $D^n(\ph\ps)=\sum_{m=0}^n\binom{n}{m}D^m(\ph)D^{n-m}(\ps)$ proves that $\eta_t$ is a morphism of $R$-algebras, while the expression 
\[
D^{\ell+m}(f)=\sum_{(f)}D^\ell(f_{(1)})D^m(f_{(2)})
\]
allows one to verify that $\eta_t$ satisfies the uniqueness statement concerning the exponential given in \cite[II.3.6.1, p. 264]{DG}.  
In conclusion, \[\exp^\#=\sum_{n=0}^\infty\fr{D^n}{n!}.\]
We are now in a position to comfortably prove the

\begin{prp}\label{11.07.2017--1}We keep the notations and assumptions made just above.  Write $\g U$ for the $\pi$-adic completion of $U$ and let $\g H\subset\g U$ be a formal flat closed subgroup scheme. Then,  there exists a closed flat subgroup $H\subset U$ inducing $\g H$.  Said differently, these formal subgroup schemes are algebraizable. 
\end{prp}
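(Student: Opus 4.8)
The plan is to transport everything to the Lie algebra by means of the exponential isomorphism, and to show that, in characteristic $(0,0)$, flat closed formal subgroups and flat closed algebraic subgroups both correspond to one and the same class of Lie subalgebras of $\g g:=\mathrm{Lie}(U)$, namely the \emph{saturated} ones.

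First I would use $\exp\colon\g g_a\arou{\sim}U$ (with the BCH multiplication on the source) to reduce to the case $U=\g g_a$; here $\g g=\om_U^{*}$ is free of finite rank over $R$ and, since $U\ot K$ and $U\ot k$ are unipotent in characteristic zero, $\g g$ is a nilpotent $R$-Lie algebra (the iterated brackets that vanish generically vanish, $\g g$ being torsion free). Passing to $\pi$-adic completions, $\exp$ induces an isomorphism of formal group schemes $\wh{\g g_a}\arou{\sim}\g U$, so I may replace $\g H$ by its image $\g G\subseteq\wh{\g g_a}$ and work inside the completion of the vector group.

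Next I would extract the Lie subalgebra. Put $\g h:=\mathrm{Lie}(\g G)\subseteq\mathrm{Lie}(\wh{\g g_a})=\g g$. Because $\g G\hookrightarrow\wh{\g g_a}$ is a homomorphism of formal group schemes, $\g h$ is a Lie subalgebra; I claim moreover that it is \emph{saturated}. Indeed, a closed immersion of flat formal group schemes induces a surjection of conormal modules $\om_U\twoheadrightarrow\om(\g G)$ with some kernel $N$; applying $\mathrm{Hom}_R(-,R)$ to $0\to N\to\om_U\to\om(\g G)\to 0$ yields an exact sequence $0\to\g h\to\g g\to N^{*}$, whence $\g g/\g h$ embeds into the torsion-free module $N^{*}$ and is therefore torsion free. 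Thus $\g h$ is a free direct summand of $\g g$, and $\g h_a\subseteq\g g_a$ is a flat closed subscheme which, $\g h$ being a subalgebra, is a subgroup for BCH (the BCH law is a universal Lie polynomial and preserves $\g h$). Setting $H:=\exp(\g h_a)\subseteq U$, I obtain a closed flat algebraic subgroup with $\mathrm{Lie}(H)=\g h$, and the whole problem is reduced to the identity $\wh H=\g H$, that is (after transport) to $\g G=\wh{\g h_a}$.

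The hard part is precisely this last identification: that a flat closed formal subgroup $\g G$ of the completed vector group is recovered from its Lie algebra as the completion of the corresponding linear subgroup. I would argue as follows. Since the characteristic is $(0,0)$, $\g G$ is $R$-flat with smooth fibres, hence formally smooth, and its conormal module is $\g h^{*}$. To produce a comparison, I check that for each $X\in\g h=\mathrm{Lie}(\g G)$ the formal one-parameter subgroup $t\mapsto\exp(tX)$ lands in $\g G$ (a homomorphism out of the formal additive group into a unipotent group in characteristic zero is determined by its derivative at the origin, which here lies in $\mathrm{Lie}(\g G)$), so that $\wh{\g h_a}$, which is the image of the product map $(t_1,\dots,t_e)\mapsto\prod_i\exp(t_iX_i)$ for a basis $X_1,\dots,X_e$ of $\g h$, is contained in $\g G$. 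This gives a closed immersion $\wh{\g h_a}\hookrightarrow\g G$ of flat formal group schemes; on conormal modules it is the identity $\g h^{*}=\g h^{*}$, and on special fibres it is an isomorphism by the classical correspondence between Lie subalgebras and closed connected subgroups of a unipotent group over the field $k$ of characteristic zero. A topological Nakayama argument (equivalently, the criterion of \cite[Lemma 1.3]{WW} applied level by level) then upgrades this to an isomorphism $\wh{\g h_a}\arou{\sim}\g G$ over $R$. Unwinding the exponential gives $\g H=\wh H$, proving that $\g H$ is algebraizable. I expect the genuine obstacle to be this containment $\wh{\g h_a}\subseteq\g G$ together with the flatness bootstrap, since everything else is formal once the exponential dictionary is in place.
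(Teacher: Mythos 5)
Your reduction to the vector group via $\exp$, the saturation argument for $\g h$, and the concluding Nakayama step are all sound, and in outline your strategy parallels the paper's (which also linearizes by $\exp$ and matches Lie-type data). The genuine gap sits exactly at the step you yourself flag as the hard part: the containment $\wh{\g h_a}\subseteq\g G$. The principle you invoke --- that a homomorphism from the formal additive group to a unipotent group in characteristic zero is determined by its derivative at the origin --- is a \emph{uniqueness} statement, and uniqueness alone cannot produce the factorization you want. Knowing that $X$ lies in $\mathrm{Lie}(\g G)$ says nothing about the line $t\mapsto tX$ landing in $\g G$ unless you first \emph{exhibit} a one-parameter subgroup $\wh{\GG}_a\to\g G$ with derivative $X$; only then does uniqueness, applied inside the ambient $\wh{\g g_a}$, identify it with $t\mapsto tX$ and give the containment. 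But exhibiting such a one-parameter subgroup means exponentiating $X$ inside the $\pi$-adic formal group $\g G$ itself, and $\g G$ is precisely the object whose algebraic (hence ``exponentiable'') nature is in question --- you cannot quote the algebraic theory of unipotent groups for it. This existence statement is, modulo routine reductions, essentially equivalent to the proposition being proved, so as written the argument is circular at its crucial point. It can be repaired (for instance level by level: each $\g G\ot R_n$ is flat of finite type over the Artin $\QQ$-algebra $R_n$ with smooth unipotent special fibre, hence smooth, and one must then construct $\exp$ for such groups over $R_n$ and check its naturality), but that is real work which the proposal does not supply.

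For comparison, the paper obtains the same inclusion $\wh H\subseteq\g H$ by a purely Hopf-algebraic computation that never needs an exponential for $\g H$ itself: writing $\exp^\#=\sum_n D^n/n!$, letting $I\subset R\langle U\rangle$ be the ideal of $\g H$ and $\nu=\mathrm{Ker}\bigl(\om_U\to\om(\g H/R)\bigr)$, one has $\ep(I)=0$ and $\wh D(I)\subseteq\nu$, and since $I$ is a Hopf ideal, the convolution formula $D^n(f)=\sum_{(f)}D(f_{(1)})\cdots D(f_{(n)})$ forces $\wh D^n(f)$ into the ideal generated by $\nu$ for every $f\in I$ and $n\ge 1$; hence $\wh\exp^\#(I)\subseteq(\nu)$, which is exactly the desired containment. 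A second, smaller, gap: your special-fibre step uses $\mathrm{Lie}(\g G\ot k)=\g h\ot k$, and your identification of the conormal module of $\g G$ with $\g h^{*}$, both of which fail if $\om(\g G)$ has $\pi$-torsion; saturation of $\g h$ in $\g g$ does not exclude this, because $\g h$ is defined as a dual and cannot detect torsion in $\om(\g G)$. The paper disposes of this issue first, proving $\om(\g H/R)$ free: $\g H\ot k$ is smooth (characteristic zero), so each $\g H\ot R_n$ is smooth over $R_n$ by the fibrewise criterion, whence $\om(\g H/R)\ot R_n\simeq\om(\g H\ot R_n/R_n)$ is free over $R_n$ for every $n$, and freeness over $R$ follows. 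You would need to insert the same argument before quoting the subgroup--subalgebra correspondence over $k$.
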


\begin{proof} We start by remarks on the structure of $\g H$. 
We note that $\g H$ is connected because the unipotent group scheme $\g H\ot k$ is,  as a scheme, an affine space  \cite[Proposition IV.2.4.1, p.497]{DG}. Also, since $\g H\ot k$ is smooth over $k$, the fibre-by-fibre smoothness criterion \cite[${\rm IV}_4$, 17.8.2, p.79]{EGA} proves that  $\g H\ot R_n$ is smooth over $R_n$. We conclude that the conormal module $\om(\g H\ot R_n/R_n)$ is free over $R_n$; as the canonical arrow $\om(\g H/R)\ot R_n\to\om(\g H\ot R_n/R_n)$ is bijective \cite[I.4.1.6, p.99]{DG}, $\om(\g H/R)$ is free over $R$.

 The canonical morphism $R[U]\to R\langle U\rangle$ induces a bijection $\om(U/R)\to\om(\g U/R)$ so that we can associate to $\om(\g H/R)$, which is a quotient Lie-coalgebra  of $\om(\g U/R)$, a quotient Lie coalgebra $\om(\g H/R)$ of $\om(U/R)$. Let $H\subset U$ be a closed subgroup scheme which  corresponds, via $\exp$, to $\om(\g H/R)$. (In this case $H\simeq\mm{Spec}\,\bb S( \om(\g H/R))$ and the kernel of $\om(U/R)\to\om(H/R)$ is the kernel of $\om(U/R)\to\om(\g H/R)$.)

\begin{claim*}The closed immersion $\wh H\to\g U$ factors as $\wh H\to \g H\to \g U$, where $\wh H\to \g H$ is a closed immersion inducing an isomorphism $\om(\g H/R)\stackrel{\sim}\to\om(\wh H/R)$.  
\end{claim*}
We ease notation by putting $\om_{\g H}=\om(\g H/R)$, etc. Write $\nu:=\mm{Ker}(\om_U\to\om_{\g H})$ and let $I\subset R\langle U\rangle$ stand for the ideal cutting out  $\g H$. Let $\wh D^n$, respectively  $\wh\exp^\#$, stand for the prolongation of $D^n$, respectively $\exp^\#$, to $\pi$-adic completions. Then $\wh D(I)\subset \nu$, and using equation \eqref{convolution}, we conclude that $\wh\exp^\#$ takes $I$ to the ideal $(\nu)\po\bb S(\om_U)^\wedge$, which means that $\g H\to \g U$ factors $\wh H\to\g U$. Since $\wh H\to\g U$ is a closed immersion so is $\wh H\to\g H$; since $\om_{\g U}\to\om_{\wh H}$ and $\om_{\g U}\to\om_{\g H}$ have the same kernel, the claim is proved. 

 Using again that $\om(\g H/R)\ot R_n\stackrel\sim\to\om(\g H_n/R_n)$ and $\om(H/R)\ot R_n\stackrel\sim\to\om(H_n/R_n)$ \cite[I.4.1.6, p.99]{DG}, we conclude that the closed immersion $H_n\to \g H_n$ induces an isomorphism $\om(\g H_n/R_n)\to\om(H_n/R_n)$. Now, \cite[I.4.4.2, p.109]{DG}   shows that $\wh\co_{H_n,e}$, resp. $\wh\co_{\g H_n,e}$,  is isomorphic to the completion of   ${\bf S}(\om(H_n/R_n))$, resp. ${\bf S}(\om(\g H_n/R_n))$, with respect to the augmentation ideal, so that we derive an isomorphism $\wh\co_{\g H_n,e}\stackrel{\sim}{\to}  \wh\co_{H_n,e}$. This,  in turn implies $\co_{\g H_n,e}\stackrel{\sim}{\to} \co_{H_n,e}$. Hence,   $H_n\to\g H_n$ is flat at the point $e$, and consequently flat allover \cite[${\rm VI}_B$, Proposition 1.3]{SGA3}. We can therefore say that  the closed immersion $H_n\to \g H_n$ is an open immersion. Because  
$H_n$ and $\g H_n$ are connected, we conclude that $H_n\to \g H_n$ is an isomorphism which,  in turn, implies that $\wh H\to\g H$ is an isomorphism. 
\end{proof}

The following corollary shall prove useful in Section \ref{07.07.2017--1}.  
\begin{cor}\label{29.09.2017--1}Assume that $R$ is of characteristic $(0,0)$ and let  $U\in(\bb{FGSch}/R)$ be unipotent and of finite type. 
Given an arrow 
\[\rho:G\aro U\]
of $(\bb{FGSch}/R)$ which induces an  isomorphism on generic fibres, either $G$ is of finite type, or $R[G]$ contains an $R$-submodule isomorphic to $K$.    
\end{cor}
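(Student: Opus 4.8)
The plan is to trade $\rho$ for a single Neron blowup of a formal subgroup, to algebraize that subgroup, and then to read off the copy of $K$ directly from the resulting coordinate ring. Since $R$ has characteristic $(0,0)$, the residue field $k$ has characteristic zero, and $G\ot K\simeq U\ot K$ is of finite type; thus Corollary \ref{11.07.2017--2} applies to $\rho:G\to U$ and furnishes an integer $n_0$, a finite-type group scheme $P$ occurring as a term of the standard sequence of $\rho$ (Section \ref{20.02.2017--1}), and a formal closed flat subgroup $\g H\subset\wh P$ with $G\simeq\cn_{\g H}^\infty(P)$. First I would verify that $P$ is unipotent, arguing by induction along the standard sequence $U=G_0\leftarrow G_1\leftarrow\cdots$: each generic fibre is isomorphic to the unipotent $U\ot K$, while each special fibre $G_{n+1}\ot k$ is an extension of the subgroup $B_n\subset G_n\ot k$ by the vector group $\mathrm{Ker}(\ph_n\ot k)$, and both subgroups of and extensions of unipotent groups over a field are unipotent.

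This is exactly the hypothesis required to apply Proposition \ref{11.07.2017--1} to $P$: the formal subgroup $\g H\subset\wh P$ is algebraizable, say $\g H=\wh H$ for a closed flat subgroup $H\subset P$. Hence $\cn_{\g H}^\infty(P)=\cn_H^\infty(P)$, and by the formula recalled at the end of Section \ref{23.02.2017--3} its ring of functions is
\[
R[G]\;=\;\bigcup_n R[P]\bigl[\pi^{-n-1}I\bigr]\subset R[P]\ot_R K,
\]
where $I\subset R[P]$ is the ideal of $H$. Everything now turns on whether $I$ vanishes.

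If $I=0$ then $H=P$ scheme-theoretically, the union collapses to $R[P]$, and $G\simeq P$ is of finite type. If $I\neq0$, I would pick any $f\in I$ with $f\neq0$. For every $n$ one has $\pi^{-n-1}f\in\pi^{-n-1}I\subset R[P][\pi^{-n-1}I]\subset R[G]$, so the $R$-submodule $\bigcup_nR\,\pi^{-n-1}f$ of $R[G]$ is precisely $K\cdot f$, since $\bigcup_n\pi^{-n-1}R=K$. As $f$ is a nonzero element of the torsion-free ring $R[G]\subset R[P]\ot_R K$, multiplication by $f$ gives an isomorphism of $R$-modules $K\arou{\sim}K\cdot f$, so $R[G]$ contains a copy of $K$, as wanted.

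The deductive weight rests on the two imported results: Corollary \ref{11.07.2017--2}, which compresses the entire standard sequence into one infinite blowup, and Proposition \ref{11.07.2017--1}, which makes its centre algebraic; the remaining manipulations are elementary. The step I expect to require the most care is the one that renders Proposition \ref{11.07.2017--1} applicable, namely checking that the term $P$ produced by Corollary \ref{11.07.2017--2} remains unipotent, since that Proposition is stated only in the unipotent case and the standard sequence of $\rho$ need not a priori stay within that class.
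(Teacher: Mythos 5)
Your proof is correct and follows essentially the same route as the paper's: Corollary \ref{11.07.2017--2} to realise $G$ as the blowup of a formal subgroup of a term of the standard sequence, the unipotence of that term (via the vector-group kernels of the $\ph_n\ot k$ and stability of unipotence under subgroups and extensions), Proposition \ref{11.07.2017--1} to algebraize $\g H$, and finally the copy of $K$ generated by a nonzero element of the ideal of $H$. The only difference is presentational: the paper assumes $G$ is not of finite type and implicitly concludes the ideal is nonzero, whereas you make the dichotomy $I=0$ versus $I\neq 0$ explicit.
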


\begin{proof}We assume that $G$ is not of finite type over $R$. Let 
\[
\cdots\aro U_{n}\aro \cdots\aro U
\]
be the standard sequence of $\rho$. It follows from Corollary  \ref{11.07.2017--2} that for some $n_0\in\NN$, $G\simeq \cn^\infty_{\g H}(U_{n_0})$, where $\g H$ is a closed and flat subgroup of $\wh U_{n_0}$. Since the kernel of $U_{n+1}\ot k\to U_n\ot k$ is unipotent \cite[Theorem 1.5]{WW}, basic theory \cite[IV.2.2.3, p.485]{DG} tells us that  $U_{n_0}$ is unipotent. So  Proposition \ref{11.07.2017--1} can be applied: there exists some flat and closed  $H\subset U_{n_0}$ such that   
 $\g H=\wh H$. In addition, as $G$ is not of finite type over $R$, the closed immersion $H\subset U_{n_0}$ is not an isomorphism.  
Now, if $f\in R[U_{n_0}]\setminus\{0\}$ belongs to the ideal cutting out $H$, then, in  $R[G]\simeq\cn_{H}^\infty(U_{n_0})$, $f$ can be \emph{uniquely} divided by any power of $\pi$, so that $\cup_mR\pi^{-m}f$ is isomorphic to $K$.   
\end{proof}

\section{Neron blowups of formal subgroup schemes in the Tannakian theory of ``abstract'' groups}\label{abstract_groups}

In Section \ref{06.02.2017--1} we saw that blowing up a formal subgroup scheme is a very pertinent operation in the theory of flat group schemes over $R$. We now investigate if such group schemes play a role in the Tannakian theory of abstract groups. Apart from the results of Section \ref{24.02.2017--1}, our motivation to carry the present study comes from the fact that Tannakian categories of abstract groups produce easily many interesting examples of group schemes. 

Section \ref{10.02.2017--1} contains preparatory material on the Tannakian categories in question. Section \ref{06.07.2017--3} explains how to mimic, in our setting, the folkloric computation of differential Galois groups from the monodromy representation. 
Section \ref{23.02.2017--7} shows that Neron blowups of formal subgroup schemes appear naturally in the Tannakian theory of abstract groups, already in characteristic $(0,0)$. The results here shall be employed in Section \ref{24.02.2017--1} to study categories of $\cd$-modules.

\subsection{Representations of abstract groups}\label{10.02.2017--1}\label{06.07.2017--4} (In this section we make no supplementary assumption on $R$.)

Let $\Ga$ be an abstract group. In what follows, we convey some thoughts on the existence and basic properties of a ``Tannakian envelope'' for $\Ga$ over $R$, that is, a \emph{flat} affine group scheme $\Pi$ such that $\rep{R}{\Pi}\simeq \rep{R}{\Ga}$.  Of course, contrary to the case of a ground field, flatness is not gratuitous, so that the existence of such an envelope imposes one extra property on $\rep R\Ga$  which we briefly explain. Recall from \cite[Definition 1.2.5]{DH14} that $\rep{R}{\Ga}$ is \emph{Tannakian} if every  $V\in\rep{R}{\Ga}$ is the target of an epimorphism $\wt V\to V$ from an object $\wt V\in\repp{R}{\Ga}$. If $\rep R{\Ga}$ is not Tannakian, it still contains the full Tannakian subcategory $\rep R{\Ga}^\text{tan}$ consisting of those objects satisfying the aforementioned condition.

\begin{dfn}\label{22.02.2017--3}
\begin{enumerate}\item 
We say that $\Ga$ is \emph{Tannakian over $R$} if       $\rep{R}{\Ga}$ is Tannakian in the sense explained above.   
\item  The \emph{Tannakian envelope} of $\Ga$ over $R$  is the flat   group scheme  constructed from $\rep{R}{\Ga}^{\rm tan}$ and the forgetful functor $\rep{R}{\Ga}^{\rm tan} \to\modules R$ by means of the main theorem of Tannakian duality (see \cite[II.4.1, 152ff]{saavedra} or \cite[Theorem 1.2.6]{DH14}). Note that, whether $\Ga$ is Tannakian or not, its  envelope does exist. 

\end{enumerate}
\end{dfn}

\begin{ex}
Any \emph{finite} abstract group $\Ga$ is Tannakian. (One uses that the multiplication morphism  $R\Ga\ot_RV\to V$ is equivariant.)   
\end{ex}

In order to explore the defining property of a Tannakian group,   we use the following terminology. 
 
\begin{dfn}Let $V\in\mm{Rep}_R(\Ga)$. 
\begin{enumerate}\item Assume that $\pi V=0$, that is, $V$ is a $k$-module. An object  $\wt V\in\repp{R}{\Ga}$ such that $\wt V/\pi\wt V\simeq V$ is called a \emph{lift} of $V$ from $k$ to $R$. 
\item An objet $\wt V\in\repp{R}{\Ga}$ which is the source of an epimorphism $\wt V\to V$ is called a \emph{weak lift} of $V$. 
\end{enumerate}
\end{dfn}

\begin{prp}[{cf. \cite[Proposition 5.2.2]{DH14}}]\label{21.02.2017--1} Assume that each $V\in\mm{Rep}_R(\Ga)$ which is annihilated by $\pi$ has a weak lift. Then every $E\in\mm{Rep}_R(\Ga)$ has a weak lift.
\end{prp}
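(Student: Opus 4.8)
The plan is to argue by induction on the length $\ell(E):=\mathrm{length}_R(E_{\mathrm{tors}})$ of the torsion submodule of $E$, which is finite because $E$ is of finite type over the DVR $R$. If $\ell(E)=0$, then $E$ is $R$-free, so $E\in\repp R\Ga$ and the identity exhibits $E$ as its own weak lift. So the content is in the inductive step, where $\ell(E)>0$.

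For the step, I would first apply the hypothesis to $\ov E:=E/\pi E$, which is annihilated by $\pi$ and of finite type over $k$: this yields a weak lift $q\colon W\to\ov E$ with $W\in\repp R\Ga$ and $q$ an equivariant epimorphism. The central construction is then the fibre product $P:=E\times_{\ov E}W$, formed along $q$ and the reduction $E\to\ov E$ and equipped with the diagonal $\Ga$-action; since both structure maps are equivariant, $P$ is a $\Ga$-submodule of $E\op W$, hence an object of $\rep R\Ga$ (it is finitely generated over the noetherian ring $R$). The two projections furnish equivariant maps $\mathrm{pr}_E\colon P\to E$ and $\mathrm{pr}_W\colon P\to W$. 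As $q$ is onto, $\mathrm{pr}_E$ is onto; as $E\to\ov E$ is onto, $\mathrm{pr}_W$ is onto with kernel $\pi E$, giving an exact sequence $0\to\pi E\to P\to W\to 0$.

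The key point, which makes the induction run, is that this sequence strictly lowers the torsion length. Since $W$ is $R$-free the sequence $R$-splits, so $P\cong_R \pi E\op W$ and therefore $P_{\mathrm{tors}}\cong(\pi E)_{\mathrm{tors}}=\pi(E_{\mathrm{tors}})$. Because $E_{\mathrm{tors}}\neq0$, Nakayama gives $E_{\mathrm{tors}}/\pi E_{\mathrm{tors}}\neq0$, whence $\ell(P)=\mathrm{length}_R(\pi E_{\mathrm{tors}})<\ell(E)$. By the induction hypothesis $P$ admits a weak lift $F\to P$ with $F\in\repp R\Ga$, and composing with the epimorphism $\mathrm{pr}_E$ displays $F\to P\to E$ as a weak lift of $E$, completing the induction.

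The step I expect to require the most care -- and the very reason for introducing the fibre product rather than lifting $q$ directly -- is precisely this drop in torsion length. A naive attempt to lift the equivariant surjection $W\to\ov E$ to an equivariant map $W\to E$ meets a genuine cohomological obstruction living in $H^1(\Ga,\hh R W{\pi E})$, which need not vanish for a general abstract group $\Ga$. The fibre product sidesteps this obstruction entirely: instead of lifting $q$, one enlarges the target to $P$ and recurses, trading the lifting problem for a strictly smaller instance of the same problem. I would therefore be most careful about two verifications on which everything rests: that $P$ is of finite type over $R$, and that the torsion computation $(\pi E)_{\mathrm{tors}}=\pi(E_{\mathrm{tors}})$ is correct, so that the strict inequality $\ell(P)<\ell(E)$ is secured.
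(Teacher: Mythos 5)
Your proof is correct, and it shares the paper's central device --- the fibre product of $E$ with a weak lift of a $\pi$-torsion quotient, equipped with the diagonal $\Gamma$-action, whose two projections are equivariant surjections --- but the induction is organized genuinely differently. The paper inducts on the exponent $r(E)=\min\{s:\pi^sE_{\mathrm{tors}}=0\}$ and splits into two cases: when $r(E)=1$ it forms exactly your square over $E/\pi E$ and observes that the pull-back is torsion-free (an extension of the torsion-free $\tilde C$ by the torsion-free $\pi E$), hence already free, so no further recursion is needed; when $r(E)>1$ it quotients instead by $N=E[\pi]$, applies the \emph{inductive} hypothesis to $E/N$ (whose exponent has dropped by at least one), and the resulting pull-back has torsion exactly $N$, killed by $\pi$, so the first case finishes it off. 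You instead induct on $\mathrm{length}_R(E_{\mathrm{tors}})$, always quotient by $\pi E$, invoke the proposition's hypothesis (never the inductive one) for the weak lift $W\to E/\pi E$, and recurse on the pull-back $P$; the strict drop $\ell(P)<\ell(E)$ via $P_{\mathrm{tors}}\cong\pi(E_{\mathrm{tors}})$ and Nakayama is correct, as are the finite-generation, surjectivity and $R$-splitting verifications you single out. What each buys: your scheme is more uniform (one case, one construction, and the hypothesis of the statement is what gets used at every step), while the paper's two-case scheme has recursion depth bounded by the torsion exponent rather than the typically larger torsion length, and its $r(E)=1$ case exhibits the pull-back as free outright instead of recursing once more.
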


\begin{proof}Given $M\in\rep R\Ga$, we define  
\[r(M)=\min\{s\in\NN\,:\,\pi^sM_{\rm tors}=0\}.
\]
We shall proceed by induction on $r(E)$, the case $r(E)=0$ being trivial. 
Assume $r(E)=1$, so that $\pi E$ is torsion-free. Let $q:E\to C$ be the quotient by $\pi E$; since $C$ is annihilated by $\pi$, the hypothesis gives us $\wt C\in\repp{R}{\Ga}$ and an epimorphism  $\si:\wt C\to C$.  
We  then have a commutative diagram with  exact rows 
\[
\xymatrix{0\ar[r]& \pi E\ar[r]&E\ar[r]^q&C\ar[r]&0\\ 0\ar[r]& \pi E\ar[u]^\sim\ar[r]&\wt E\ar@{}[ru]|\square\ar[u]^{\tau}\ar[r]_\chi&\ar[u]_\si\wt C\ar[r]&0,}
\]
where the rightmost square is cartesian and $\tau$ is surjective. Since $\pi E$ and $\wt C$ are torsion-free, so is $\wt E$, and we have found a weak lift of $E$.  

Let us now assume that $r(E)>1$.  Let $N=\{e\in E\,:\,\pi e=0\}$ and denote by $q:E\to C$ the quotient by $N$. It then follows that $\pi^{r(E)-1}C_\mm{tors}=0$, so that $r(C)\le r(E)-1$.  
By induction there exists  $\wt C\in\repp R\Ga$ and a surjection $\si:\wt C\to C$. We arrive at commutative diagram with exact rows 
\[
\xymatrix{0\ar[r]& N\ar[r]&E\ar[r]^q&C\ar[r]&0\\ 0\ar[r]& N\ar[u]^\sim\ar[r]&\wt E\ar@{}[ru]|\square\ar[u]^{\tau}\ar[r]_\chi&\ar[u]_\si\wt C\ar[r]&0,}
\]
where the rightmost square is cartesian and $\tau$ is surjective.  Since $\wt C$ is torsion-free as an $R$-module, we conclude that $\wt E_\mm{tors}= N$, so that $r(\wt E)\le1$. We can therefore find  $\wt E_1$ and a surjection  $\wt E_1\to\wt E$ and consequently  a surjection $\wt E_1\to E$. \end{proof}

The simplest cases where any $V\in\rep R\Ga$ annihilated by $\pi$ has a lift  occurs when $R$ has a coefficient field \cite[p.215]{matsumura} (in this case the reduction morphism $\GL_r(R)\to\GL_r(k)$ has a section) or when the group in question is free. 
Hence: 

\begin{cor}\label{23.02.2017--2}A free group is Tannakian. If  $R$ has a coefficient field, then any abstract group is Tannakian.\qed
\end{cor}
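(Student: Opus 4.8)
The plan is to deduce both assertions from Proposition \ref{21.02.2017--1}. That result reduces the Tannakian property of $\Ga$ to a single verification: every $V\in\rep{R}{\Ga}$ with $\pi V=0$ must admit a weak lift. I will in fact produce, in each of the two situations, an honest \emph{lift}, that is an object $\wt V\in\repp{R}{\Ga}$ with $\wt V/\pi\wt V\simeq V$; the quotient map $\wt V\to\wt V/\pi\wt V$ then furnishes the epimorphism required for a weak lift. So fix $V$ with $\pi V=0$. Being a finitely generated $R$-module killed by $\pi$, $V$ is a finite-dimensional $k$-vector space, say of dimension $r$, and after choosing a $k$-basis its $\Ga$-action is recorded by a homomorphism $\rho:\Ga\to\GL_r(k)$. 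The entire problem is thus to lift $\rho$ along the reduction homomorphism $\GL_r(R)\to\GL_r(k)$ to some $\wt\rho:\Ga\to\GL_r(R)$; then $R^r$ equipped with the action $\wt\rho$ is the desired lift.

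Suppose first that $R$ has a coefficient field, i.e. a subfield $\ka\subset R$ carried isomorphically onto $k$ by the residue map; this amounts to a ring-theoretic section $s:k\to R$ of $R\to k$. Applying $s$ entrywise to matrices gives a map $\GL_r(k)\to\GL_r(R)$: a matrix over $k$ with nonzero determinant lifts to one whose determinant reduces to a nonzero element of $k$ and hence is a unit, since $R$ is local, so the lifted matrix is invertible. As $s$ is a ring homomorphism this map is a group homomorphism, and it splits the reduction $\GL_r(R)\to\GL_r(k)$ because $s$ splits $R\to k$. Composing $\rho$ with it yields $\wt\rho$, and Proposition \ref{21.02.2017--1} concludes that every abstract group is Tannakian in this case.

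Now suppose $\Ga=F(X)$ is free on a set $X$. No splitting of the reduction is available, but freeness makes one unnecessary: since $R$ is local, $\GL_r(R)\to\GL_r(k)$ is surjective (lift a matrix entrywise and observe that its determinant is a unit). Choosing, for each generator $x\in X$, a preimage of $\rho(x)$ in $\GL_r(R)$, the universal property of the free group extends this family uniquely to a homomorphism $\wt\rho:\Ga\to\GL_r(R)$. By construction $\wt\rho$ reduces to $\rho$ on the generators, hence on all of $\Ga$, so $R^r$ with the action $\wt\rho$ lifts $V$, and a final appeal to Proposition \ref{21.02.2017--1} finishes the argument.

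I do not anticipate a genuine obstacle here, since the corollary is a direct application of Proposition \ref{21.02.2017--1}: the sole content is the lifting of the mod-$\pi$ matrix representation $\rho$, and the two hypotheses are precisely what make this possible, a coefficient field splitting the reduction on $\GL_r$ while freeness reduces lifting to lifting the images of generators, for which mere surjectivity of $\GL_r(R)\to\GL_r(k)$ suffices. The only points needing a moment's care are the routine checks that these lifts respect the $R$-linear structure and commute with the $\Ga$-action, both immediate from the construction.
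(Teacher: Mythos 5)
Your proof is correct and follows essentially the same route as the paper: the paper's justification is precisely the remark preceding the corollary (a coefficient field yields a section of $\GL_r(R)\to\GL_r(k)$, and for a free group one lifts the images of generators using surjectivity of this reduction map), combined with Proposition \ref{21.02.2017--1}. You have simply written out in full the details the paper leaves implicit.
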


\begin{cor}\label{12.06.2017} Let $\Pi$  denote the Tannakian envelope of $\Ga$ over $R$. Let $\Te$ denote the Tannakian envelope of $\Ga$ over $k$. 
The following claims hold true. 
\begin{enumerate}\item There exists a canonical  faithfully flat arrow of group schemes 
$h:\Te\to \Pi\ot k$.
\item The   morphism $h$ is an isomorphism if and only if $\Ga$ is Tannakian. 
\end{enumerate} \end{cor}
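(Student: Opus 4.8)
The plan is to work with both Hopf algebras as algebras of representative functions. Let $\rho\colon\rep R\Ga^{\rm tan}\to\rep k\Ga$ be the reduction functor $V\mapsto\ov V:=V/\pi V$. Since every object of $\rep R\Ga^{\rm tan}$ is a quotient of a free one, $R[\Pi]$ is spanned, inside the ring $\mathrm{Funct}(\Ga,R)$ of functions $\Ga\to R$, by the matrix coefficients $\ga\mapsto\xi(\ga v)$ with $V\in\repp R\Ga$, $v\in V$, $\xi\in V^\vee$; likewise $k[\Te]$ is the algebra of all matrix coefficients of objects of $\rep k\Ga$. I would \emph{define} $h$ as $\mathrm{Spec}$ of the Hopf-algebra homomorphism $h^*\colon k[\Pi\ot k]=R[\Pi]\ot k\to k[\Te]$ carrying the class of a matrix coefficient of $V$ to the matrix coefficient of $\ov V$. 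Functoriality of $\rho$ shows $h^*$ respects the coend relations, so it is well defined; alternatively $h$ is the morphism produced by Tannakian functoriality from the $\ot$-functor $\rho$ together with the comparison of forgetful fibre functors under $R\to k$.

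For part (1) it is then enough to check that $h^*$ is \emph{injective}, because over the field $k$ a homomorphism of group schemes is faithfully flat precisely when the corresponding map of Hopf algebras is injective. The crux --- and the step I expect to be the main obstacle --- is a saturation statement: \emph{$R[\Pi]$ is saturated in $\mathrm{Funct}(\Ga,R)$.} (That $R[\Pi]\hookrightarrow\mathrm{Funct}(\Ga,R)$ at all follows from the density of $\Ga$ in $\Pi\ot K$, the field case over $K$, together with torsion-freeness of $R[\Pi]$.) Granting saturation, injectivity of $h^*$ is immediate: an element of $R[\Pi]\ot k$ in the kernel lifts to $f\in R[\Pi]$ with $f(\ga)\in\pi R$ for all $\ga$, so $f\in R[\Pi]\cap\pi\,\mathrm{Funct}(\Ga,R)=\pi R[\Pi]$ and the class is zero. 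To prove saturation, write such an $f$ (after replacing $V$ by a direct sum) as a single matrix coefficient $\ga\mapsto\xi(\ga v)$ of a free $V\in\repp R\Ga$. The $R$-span $W$ of the orbit $\{\ga v:\ga\in\Ga\}$ is finitely generated and torsion-free, hence \emph{free}; as $\xi\equiv0\bmod\pi$ on a generating set of $W$ we get $\xi|_W=\pi\eta$ with $\eta\in W^\vee$, and therefore $f/\pi$ is the matrix coefficient $\ga\mapsto\eta(\ga v)$ of the free representation $W$. Thus $f/\pi\in R[\Pi]$. Conceptually this says exactly that $\Ga$ is schematically dense in $\Pi\ot k$, which is why $h$ is faithfully flat.

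For part (2) I would use the standard dictionary (see \cite{DM82}): being already faithfully flat, $h$ is an isomorphism iff it is a closed immersion, iff $h^*$ is surjective, iff every object of $\rep k\Ga$ is a subquotient of reductions $\ov V$ of objects $V\in\repp R\Ga$. It remains to identify this with the Tannakian property. If $\Ga$ is Tannakian, a $\pi$-torsion $W\in\rep k\Ga$ has a weak lift $\wt W\to W$ with $\wt W\in\repp R\Ga$; since $\pi W=0$ this factors through a surjection $\ov{\wt W}\to W$, so $W$ is a quotient of a reduction, and every object of $\rep k\Ga$ arises so. Conversely, if each $W\in\rep k\Ga$ is a subquotient $A/B$ with $B\subset A\subset\ov V$, then the preimage $\wt A\subset V$ of $A$ is a free object of $\repp R\Ga$ admitting a surjection $\ov{\wt A}\to A\to W$, i.e. a weak lift of $W$; by Proposition \ref{21.02.2017--1} this forces $\Ga$ to be Tannakian. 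The only delicate points are the routine DVR module manipulations (freeness of finitely generated torsion-free submodules, and descent of surjections along $\ot k$) and checking that ``subquotient of reductions'' is exactly the essential image of the representation functor attached to $h$.
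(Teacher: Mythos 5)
Your argument for part (1) rests entirely on the identification of $R[\Pi]$ with the algebra of matrix coefficients inside $\mathrm{Funct}(\Ga,R)$: both the saturation claim and the computation of $\ker h^*$ presuppose that the evaluation map $R[\Pi]\to\mathrm{Funct}(\Ga,R)$ is injective, and this is precisely the step whose justification fails as written. You derive it from ``density of $\Ga$ in $\Pi\ot K$'' treated as ``the field case over $K$''; but $\Pi\ot K$ is \emph{not} the Tannakian envelope of $\Ga$ over $K$, so the classical field argument (take the closed subgroup generated by the image, retract onto it by the universal property) cannot be quoted for $\Pi\ot K$: its universal property only concerns representations coming from $\repp{R}{\Ga}$, not all of $\rep{K}{\Ga}$. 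Concretely, for $\Ga$ infinite cyclic the character of $\Ga$ over $K$ sending a generator to $\pi$ does not factor through $\Pi\ot K$ (every representation of $\Pi\ot K$ admits a $\Pi$-stable lattice, hence has unit eigenvalues on $\Ga$). In the paper, density of $u(\Ga)$ in the fibres of $\Pi$ is a genuine theorem, Proposition \ref{23.02.2017--4}(1), stated and proved only \emph{after} the present corollary, and its proof relies on the universal property of Corollary \ref{22.02.2017--4}. The gap is fillable without circularity: since $R[\Pi]$ is by construction the coend of \cite[II.4.1]{saavedra}, injectivity follows from the very trick you use for saturation --- if a coefficient class $[\xi\ot v]$ with $V\in\repp{R}{\Ga}$ vanishes on all of $\Ga$, then $\xi$ kills the free, $\Ga$-stable $R$-span $W$ of the orbit of $v$, and the coend relation applied to $W\hookrightarrow V$ gives $[\xi\ot v]=[\xi|_W\ot v]=0$. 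But as written this load-bearing step is missing.

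Once that identification is secured, the rest of your proof is correct and takes a genuinely different route from the paper's. For part (1) the paper never looks at functions on $\Ga$: it identifies $\rep{k}{\Pi\ot k}$ with the $\pi$-torsion objects of $\rep{R}{\Ga}^{\rm tan}$ via \cite[Part I, 10.1]{jantzen}, observes that the resulting functor $\eta:\rep{k}{\Pi\ot k}\to\rep{k}{\Ga}$ is fully faithful and closed under subobjects, and invokes \cite[Proposition 2.21]{DM82} to conclude faithful flatness of $h$; your route replaces this categorical criterion by an explicit Hopf-algebra computation (saturation plus \cite[Theorem 14.1]{waterhouse}), which is more hands-on and yields a concrete description of $\ker h^*$, at the price of the function-algebra model discussed above. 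For part (2) the two proofs essentially coincide: both directions funnel through weak lifts and Proposition \ref{21.02.2017--1}, your pullback construction (the preimage in $V$ of a subobject of $\ov V$ is free and surjects onto the given torsion representation) being the detail the paper hides in its identification of $\rep{k}{\Pi\ot k}$ with torsion objects of $\rep{R}{\Ga}^{\rm tan}$; you also spell out the direction the paper dismisses as ``also simple''.
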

\begin{proof}(1)  According to \cite[Part I, 10.1]{jantzen}, $\rep k{\Pi \otimes k}$ can be identified with the full subcategory of $\rep R\Pi$ consisting  of those representations annihilated by $\pi$. Hence, $\rep k{\Pi\ot k}$ can be identified with the full tensor subcategory of $\rep R\Ga^{\rm tan}$ of objects which are annihilated by $\pi$. 
We then derive a fully faithful $\ot$-functor  $\eta:\rep k{\Pi\ot k}\to\rep k\Ga$ which, on the level of vector spaces is just the identity. (Note that if $V\in\rep k{\Pi\ot k}$, then $\eta(V)$  always admits a weak lift.)
Moreover, $\eta$ is also closed under taking subobjects. Tannakian duality then produces a morphism $h:\Te\to  \Pi\otimes k$ which is, in addition, faithfully flat \cite[Proposition 2.21, p.139]{DM82}.

(2) Assume now that $h$ is an isomorphism, so that $\eta$ is an equivalence. Then,  any $V\in\rep k\Ga$ admits a weak lift, so that, by Proposition \ref{21.02.2017--1}, any $E\in\rep R\Ga$ admits a weak lift. This means that $\rep R\Ga$ is Tannakian. The converse is also simple.
\end{proof}

\begin{ex}[Non-Tannakian groups]\label{06.07.2017--2}
We assume that $R$ is of mixed characteristic $(0,p)$.  
Let $\Ga$ be a periodic group of finite exponent (all orders are divisors of a fixed integer). A theorem of Burnside   \cite[Theorem 2.9, p.40]{dixon} says that every morphims $\Ga\to\GL_r(R)$ must then have a \emph{finite image}, so that a morphism $\Ga\to \GL_r(k)$ which admits a lift must have a finite image. 
This fact puts heavy restrictions on a Tannakian periodic group of finite exponent. 
For, if  $\Ga$ is moreover Tannakian, then all representations $\rho:\Ga\to\GL_r(k)$ must have a finite image. (By the Tannakian property some $\si:\Ga\to\GL_{r+h}(k)$ of the form
\[\begin{bmatrix}*&*\\ *&\rho\end{bmatrix}
\]
admits a lift.) Hence, such a periodic group cannot be ``linear over $k$''. A specific counter-example is then given by the additive group $(k,+)$, if $k$ is infinite. 
\end{ex}

\subsection{Computing faithfully flat quotients of the Tannakian envelope}\label{06.07.2017--3}(In this section we make no supplementary assumption on $R$.)

We fix  $G\in(\bb{FGSch}/R)$ and  an abstract group  $\Ga$, whose  Tannakian envelope over $R$ is denoted by $\Pi$ (see Definition \ref{22.02.2017--3}).   In this section, we shall first explain how to associate to each arrow of abstract groups $\ph:\Ga\to G(R)$ a morphism $u_\ph:\Pi\to G$. Then, we set out to  determine under which conditions on $\ph$, the map $u_\ph$ is faithfully flat.  We inform the reader that the analogous situation over a base field is folkloric (meaning that some cursory discussions can be found in the literature, e.g.   \cite[10.24]{deligne87} and  \cite[Appendix]{ABC}).


Before starting, let us fix some notations. We write  $\om_\Ga:\rep R\Ga^{\rm tan}\to\modules R$ and $\om_G:\rep RG\to\modules R$ for the  forgetful functors.  
The set $\bb{Fun}^\ot_*(\rep RG,\rep R\Ga^{\rm tan})$ is formed by  all $\ot$-functors $\eta$  satisfying, as $\ot$-functors, the equality $\om_G\circ\eta=\om_\Ga$. Given $H\in(\bb{FGSch}/R)$,  we define   $\bb{Fun}^\ot_*(\rep RG,\rep RH)$  analogously. Elements in these sets shall be called \emph{pointed} $\ot$-functors.  Finally, if $\ga\in\Ga$ and $M\in\rep R\Ga$, respectively $g\in G(R)$ and $N\in \rep RG$, we let $\ga_M:M\to M$, respectively $g_N$, stand for the action of $\ga$ on $M$, respectively $g$ on $N$.  

Let $\ph:\Ga\to G(R)$ be a morphism of abstract groups. For each  $M\in \rep RG$ and each $\ga\in\Ga$, we have an $R$-linear automorphism $\ph(\ga)_M$. 
In this way, we obtain a pointed $\ot$-functor 
\[
\ph^\natural\colon\rep RG\aro\rep R\Ga^{\rm tan}
\]
verifying the equations 
\[
\text{$\ga_{\ph^\natural(M)}=\ph(\ga)_M$ as elements of $\mm{Aut}_R(M)$}, 
\]
for all $\ga\in \Ga$ and $M\in\rep RG$.

Conversely, let $T:\rep RG\to\rep R\Ga^{\rm tan}$ be a pointed $\ot$-functor. If $\ga\in \Ga$ is fixed, and $M\in\rep RG$ is given, then  $\ga_{TM}$ is an $R$-linear automorphism of $M$. It is a simple matter to show that the family $\{\ga_{TM}\,:\,M\in\rep RG\}$ defines a $\ot$-automorphism of $\om_G$, that is, an element of $\mm{Aut}^\ot(\om_G)$. (It is equally simple to note that $\ga\mapsto\{\ga_{TM}\,:\,M\in\rep RG\}$ is in fact an arrow of groups.) Since the canonical arrow $G(R)\to\mm{Aut}^\ot(\om_G)$ is bijective \cite[II.2.5.4, p.133]{saavedra},  we arrive at a morphism of \emph{groups} 
\[T^\flat:\Ga\aro G(R)\] 
verifying the equations 
\[
\text{$[T^\flat(\ga)]_M=\ga_{TM}$ as elements of $\mm{Aut}_R(M)$,}
\]
for all $\ga\in\Ga$ and $M\in\rep RG$.

\begin{prp}\label{26.09.2017--1}The arrows  $\ph\mapsto\ph^\natural$ and $T\mapsto T^\flat$ are inverse bijections between $\hh{}{\Ga}{G(R)}$ and $\bb{Fun}_*^\ot(\rep RG,\rep R\Ga^{\rm tan})$. In addition, the construction $(-)^\natural$ is functorial in the sense that, for each $\rho:G\to H$ in $(\bb{FGSch}/R)$, 
the diagram 
\[
\xymatrix{
\hh{}{\Ga}{G(R)}\ar[rr]^-{(-)^\natural}\ar[d]_{\rho(R)\circ(-)}&&\ar[d]^{(-)\circ\rho^\#}\bb{Fun}^\ot_*(\rep RG,\rep R\Ga^{\rm tan})
\\
\hh{}{\Ga}{H(R)}\ar[rr]_-{(-)^\natural}&& \bb{Fun}^\ot_*(\rep RH,\rep R\Ga^{\rm tan})}
\]
commutes. 
\end{prp}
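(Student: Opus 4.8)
The plan is to treat the two assertions separately, both of which I expect to reduce entirely to the defining equations of $(-)^\natural$ and $(-)^\flat$ together with the bijectivity of the canonical map $G(R)\to\mm{Aut}^\ot(\om_G)$ recalled just above. For the bijection claim I would check that the two round-trips $\ph\mapsto(\ph^\natural)^\flat$ and $T\mapsto(T^\flat)^\natural$ are the relevant identity maps; for the functoriality claim I would chase the square on an arbitrary object, exploiting that a pointed $\ot$-functor is nothing more than an assignment of $\Ga$-actions to a fixed system of underlying modules.

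First I would establish $(\ph^\natural)^\flat=\ph$. By the defining equation of $(-)^\flat$, for every $M\in\rep RG$ and $\ga\in\Ga$ one has $[(\ph^\natural)^\flat(\ga)]_M=\ga_{\ph^\natural(M)}$, and the defining equation of $(-)^\natural$ rewrites the right-hand side as $\ph(\ga)_M$. Thus $(\ph^\natural)^\flat(\ga)$ and $\ph(\ga)$ induce the same element of $\mm{Aut}^\ot(\om_G)$, and since $G(R)\to\mm{Aut}^\ot(\om_G)$ is injective they coincide. For the other round-trip I would use that a pointed $\ot$-functor $T$ preserves underlying $R$-modules (by pointedness), so that $T(M)$ is simply $M$ endowed with the action $\ga\mapsto\ga_{TM}$; likewise $(T^\flat)^\natural(M)$ carries the action $\ga\mapsto[T^\flat(\ga)]_M=\ga_{TM}$. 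Hence the two functors agree on objects, and as both act as the identity on the underlying $R$-linear maps they agree on morphisms, whence $(T^\flat)^\natural=T$.

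For functoriality, I would fix $\ph\in\hh{}{\Ga}{G(R)}$ and an object $N\in\rep RH$ and compare the two pointed functors obtained by traversing the square. Composing along the top and right edges yields $\ph^\natural\circ\rho^\#$, on which $\ga$ acts through $\ga_{\ph^\natural(\rho^\# N)}=\ph(\ga)_{\rho^\# N}$. The crucial observation is that the $G$-action on $\rho^\# N$ is the pullback along $\rho$, that is $g_{\rho^\# N}=[\rho(R)(g)]_N$ for $g\in G(R)$; applying this with $g=\ph(\ga)$ turns the action into $[(\rho(R)\circ\ph)(\ga)]_N$. Composing along the left and bottom edges yields $(\rho(R)\circ\ph)^\natural$, whose $\ga$-action on $N$ is by definition precisely $[(\rho(R)\circ\ph)(\ga)]_N$. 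Since both pointed functors share the same underlying modules and morphisms, matching the $\Ga$-actions suffices, so the square commutes.

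I do not anticipate a serious obstacle: once pointedness is used to identify a functor with an assignment of $\Ga$-actions to fixed underlying modules, every identity is forced by the defining equations. The single genuine external input is the bijectivity of $G(R)\to\mm{Aut}^\ot(\om_G)$, which supplies the injectivity needed to pass from equality of induced tensor automorphisms to equality in $G(R)$. The only point demanding a little care is confirming that the identifications respect the $\ot$-structure, but this is automatic, since all functors in sight preserve underlying modules together with their standard tensor products.
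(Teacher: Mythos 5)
Your proof is correct and follows essentially the same route as the paper: both round-trip identities are deduced from the defining equations of $(-)^\natural$ and $(-)^\flat$ together with the bijectivity of the canonical map $G(R)\to\mm{Aut}^\ot(\om_G)$. The only difference is that you actually carry out the functoriality check (via the key identity $g_{\rho^\# N}=[\rho(R)(g)]_N$ for the pullback action), which the paper omits as ``very simple''; your verification is precisely the intended one.
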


\begin{proof}One immediately sees that  $[(\ph^\natural)^\flat(\ga)]_M=\ph(\ga)_M$ for any $M\in\rep RG$. Since the natural arrow $G(R)\to\mm{Aut}^\ot(\om_G)$ is bijective, we conclude that 
 $(\ph^\natural)^\flat(\ga)=\ph(\ga)$. 
 To prove that $(T^\flat)^\natural:\rep RG\to\rep R\Ga^{\rm tan}$ coincides with $T$, we need to show that for each $\ga\in\Ga$ and each $M\in\rep RG$,  the $R$-linear automorphisms    $\ga_{(T^\flat)^\natural(M)}$ and $\ga_{TM}$ are one and the same. Now,   $\ga_{(T^\flat)^\natural(M)}$ is just  $[T^\flat(\ga)]_M$, which is the only element of $G(R)$ inducing the automorphism $\ga_{TM}$. The verification of the last property is also very simple and we omit it. 
\end{proof}

Given $\ga\in\Ga$, let $u(\ga)\in\Pi(R)={\rm Aut}^\ot(\om_\Ga)$ be defined by 
\[
\text{$u(\ga)_M=\ga_M$ as elements of $\mm{Aut}_R(M)$}. 
\]
In this way, we construct a morphism of groups \[u:\Ga\aro \Pi(R)\] and then a pointed $\ot$-functor $u^\natural:\rep R\Pi\to\rep R\Ga^{\rm tan}$. Composing the canonical equivalence $\rep R\Ga^{\rm tan}\to\rep R\Pi$ with $u^\natural$, we obtain precisely the identity, so that $u^\natural$ is also a $\ot$-equivalence. Using  Proposition \ref{26.09.2017--1} and the bijection \[\hh{}{G}{H}\aro\bb{Fun}^\ot_*(\rep RH,\rep RG),\quad\rho\longmapsto\rho^\#,\] explained in \cite[II.3.3.1, p.148]{saavedra}, we arrive     at:

\begin{cor}\label{22.02.2017--4}The function 
\[
\hh{}{\Pi}{G}\aro \hh{}{\Ga}{G(R)},\quad \rho\longmapsto \rho(R)\circ u
\]
is a bijection. \qed
\end{cor}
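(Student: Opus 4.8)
The plan is to exhibit the asserted map as a composite of bijections already produced in the text, thereby reducing the corollary to a formal manipulation. First I would observe that by construction the pointed $\ot$-functor $u^\natural\colon\rep R\Pi\to\rep R\Ga^{\rm tan}$ is a $\ot$-equivalence, and in fact the canonical identification $\rep R\Ga^{\rm tan}\simeq\rep R\Pi$ (Tannakian duality) composed with $u^\natural$ is the identity. Thus $u^\natural$ is precisely the inverse of that canonical equivalence, so precomposition with $u^\natural$ induces a bijection
\[
\bb{Fun}^\ot_*(\rep RG,\rep R\Pi)\arou{\sim}\bb{Fun}^\ot_*(\rep RG,\rep R\Ga^{\rm tan}),\qquad T\longmapsto u^\natural\circ T.
\]

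Next I would assemble the two bijections. On one side, the cited result \cite[II.3.3.1, p.148]{saavedra} gives the bijection $\hh{}{\Pi}{G}\to\bb{Fun}^\ot_*(\rep RG,\rep R\Pi)$, $\rho\mapsto\rho^\#$. On the other side, Proposition \ref{26.09.2017--1} gives the bijection $\ph\mapsto\ph^\natural$ between $\hh{}{\Ga}{G(R)}$ and $\bb{Fun}^\ot_*(\rep RG,\rep R\Ga^{\rm tan})$; composing its inverse $(-)^\flat$ with the equivalence $u^\natural\circ(-)$ above yields a chain of bijections
\[
\hh{}{\Pi}{G}\arou{\sim}\bb{Fun}^\ot_*(\rep RG,\rep R\Pi)\arou{\sim}\bb{Fun}^\ot_*(\rep RG,\rep R\Ga^{\rm tan})\arou{\sim}\hh{}{\Ga}{G(R)}.
\]
Since each arrow is a bijection, so is the composite.

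The remaining—and genuinely important—step is to check that the composite bijection is the map stated in the corollary, namely $\rho\mapsto\rho(R)\circ u$, rather than some other bijection. Tracing an element $\rho\in\hh{}{\Pi}{G}$ through the chain, I would verify that $(u^\natural\circ\rho^\#)^\flat=\rho(R)\circ u$ as morphisms $\Ga\to G(R)$. This is exactly the content of the functoriality square in Proposition \ref{26.09.2017--1}, applied to the morphism $\rho\colon\Pi\to G$ and the arrow $u\colon\Ga\to\Pi(R)$: the square says $(\rho(R)\circ u)^\natural=u^\natural\circ\rho^\#$, and applying the inverse operation $(-)^\flat$ to both sides gives the desired identity. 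The main obstacle is therefore purely bookkeeping: one must match the two notions of ``composition'' (composition of $\ot$-functors on the categorical side versus composition with $\rho(R)$ on the group-homomorphism side) via the functoriality statement, and confirm that the equivalence $u^\natural$ really acts as the identity on underlying modules so that no spurious automorphism is introduced. Once this compatibility is pinned down, the corollary follows immediately, which is why I expect the proof to be short enough to omit, as the text indeed does with \qed.
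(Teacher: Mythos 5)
Your proposal is correct and reconstructs exactly the argument the paper intends: the corollary is obtained by chaining Saavedra's bijection $\rho\mapsto\rho^\#$, the bijection of Proposition \ref{26.09.2017--1}, and the equivalence induced by $u^\natural$ (which the paper notes is strictly inverse to the canonical equivalence $\rep R\Ga^{\rm tan}\to\rep R\Pi$), with the functoriality square of Proposition \ref{26.09.2017--1} identifying the composite as $\rho\mapsto\rho(R)\circ u$. This is precisely why the paper states the corollary with no further proof, so nothing is missing.
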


\begin{rmk}Corollary  \ref{22.02.2017--4} gives a different way to think about the Tannakian envelope; needless to say this is a common viewpoint in algebraic Topology, see \cite[Appendix A]{ABC}. 
\end{rmk}

Using Corollary \ref{22.02.2017--4}, we can easily detect when a morphism $\Ga\to G(R)$ presents  $G$  as a faithfully flat quotient of $\Pi$.

\begin{dfn} Let   $G$ be a flat group scheme over $R$, and $\ph:\Ga\to G(R)$ a homomorphism of abstract groups. 
Write $\ph(\ga)_k:\mathrm{Spec}\,k\to G_k$, respectively $\ph(\ga)_K:\mathrm{Spec}\,K\to G_K$, for the closed immersion associated to $\ph(\ga):\mathrm{Spec}\,R\to G$ by base-change. We say that  $\ph$ is \emph{dense} if  $\{\ph(\ga)_k\}_{\ga\in\Ga}$, respectively  $\{\ph(\ga)_K\}_{\ga\in\Ga}$, is schematically dense  in $G_k$ \cite[${\rm IV}_3$, 11.10.2, p.171]{EGA}, respectively in $G_K$.  
\end{dfn}

\begin{rmk}The somewhat complicated definition given above is due to the usual discrepancy between topological and schematic properties. Needless to say, if in the above definition $G_k$ and $G_K$ are reduced, then the condition simply means that the image of $\Ga$ in $|G_k|$ and in $|G_K|$ is dense \cite[${\rm IV}_3$, 11.10.4]{EGA}. But in the case of  non-reduced schemes one has to proceed with caution as a simple example shows (take $\Ga=\{\pm1\}$, $R=\ZZ_2$ and $G=\mu_{2,R}$). Also note that  requiring  the family $\{\ph(\ga)\}_{\ga\in\Ga}$ to be schematically dense does not imply, in general, the  above condition (same example as before).
\end{rmk}

\begin{prp}\label{23.02.2017--4} The following claims about the Tannakian envelope $\Pi$ of $\Ga$ are true. 
\begin{enumerate}\item The morphism $u:\Ga\to\Pi(R)$ is dense.
\item Both fibres of $\Pi$ are reduced. 
\item
 Let $G\in(\bb{FGSch}/R)$ and let $\ph:\Ga\to G(R)$ be a dense morphism. Then, the morphism $u_\ph:\Pi\to G$  associated to $\ph$ by means of Corollary \ref{22.02.2017--4} is faithfully flat. 
 \end{enumerate}
\end{prp}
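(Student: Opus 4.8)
The plan is to prove (1) first, deduce (2) from it almost formally, and then reduce (3) to a fibrewise statement by means of the relation $u_\ph\circ u=\ph$ supplied by Corollary \ref{22.02.2017--4}. The single fact underlying everything is tautological: since $\rep R\Pi\simeq\rep R\Ga^{\rm tan}$ as abelian tensor categories compatibly with the forgetful functors, and $\Ga$ acts on any object $V$ precisely through $u\colon\Ga\to\Pi(R)$, an $R$-submodule of $V$ is $\Ga$-stable if and only if it is a $\Pi$-subcomodule. (Here one uses that $\rep R\Ga^{\rm tan}$ is closed under subquotients, so that every $\Ga$-stable submodule is indeed a subobject.)

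For (1) I would treat the two fibres separately but identically. Fix $F\in\{K,k\}$. As $R[\Pi]$ is generated as an $R$-module by the matrix coefficients $c^V_{\lambda,v}$ of objects $V\in\rep R\Ga^{\rm tan}$, the ring $F[\Pi_F]$ is generated by the coefficients of the base changes $V_F$, and any finite sum of these is again a single coefficient of a direct sum. Schematic density of $\{u(\ga)_F\}$ is therefore equivalent to the implication: if $c^{V_F}_{\lambda,v}$ vanishes at every $u(\ga)_F$, then it is zero. So assume $\langle\lambda,\ga_{V_F}v\rangle=0$ for all $\ga$, and let $W\subseteq V_F$ be the $F$-span of the $\Ga$-orbit of $v$; it is $\Ga$-stable, contains $v$, and is annihilated by $\lambda$. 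The key is to recognise $W$ as a $\Pi_F$-subrepresentation. Over $K$ one sets $U=W\cap V$ inside the lattice $V\subseteq V_K$; then $U$ is a $\Ga$-stable $R$-submodule, hence $\Pi$-stable by the tautological fact, and $U\ot K=W$ because $V$ is free over $R$. Over $k$ one takes $U$ to be the preimage of $W$ under $V\to V_k$, again $\Ga$-stable hence $\Pi$-stable, and $W$ is then the image of the morphism of $\Pi_k$-representations $U\ot k\to V_k$. In either case $W$ is $\Pi_F$-stable, so $g\cdot v\in W$ for all $g$ and $\langle\lambda,g\cdot v\rangle\equiv 0$; that is, $c^{V_F}_{\lambda,v}=0$. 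Part (2) is then immediate: by (1) each fibre $\Pi_F$ is the schematic image of the \emph{reduced} scheme $\coprod_{\ga}\mathrm{Spec}\,F\to\Pi_F$ defined by the rational points $u(\ga)_F$, and a schematic image of a reduced scheme is reduced (the structure sheaf of the image injects into the pushforward of a reduced sheaf). Hence $\Pi_K$ and $\Pi_k$ are both reduced.

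For (3), Corollary \ref{22.02.2017--4} gives $u_\ph(R)\circ u=\ph$, so that $\ph(\ga)_F=(u_\ph)_F\circ u(\ga)_F$ on each fibre. The schematic image of $(u_\ph)_F\colon\Pi_F\to G_F$ is a closed subgroup scheme of $G_F$ through which every $\ph(\ga)_F$ factors; as $\ph$ is dense, this closed subgroup contains a schematically dense family and hence equals $G_F$. Since over a field a homomorphism of affine group schemes is faithfully flat onto its schematic image \cite[Theorem 14.1]{waterhouse}, both fibres $(u_\ph)_K$ and $(u_\ph)_k$ are faithfully flat.

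The remaining, and genuinely delicate, step is to ascend this to faithful flatness of $u_\ph$ over $R$; here the non-finite-type nature of $\Pi$ (recall that strict pro-algebraicity fails in general) forbids a naive appeal to the fibral flatness criterion. My preferred route is the Tannakian criterion over $R$ of \cite{DH14}: $u_\ph$ is faithfully flat provided $u_\ph^\#=\ph^\natural\colon\rep RG\to\rep R\Ga^{\rm tan}$ is fully faithful and closed under subobjects. Full faithfulness follows from density once more, since a linear map intertwining all the $\ph(\ga)$ intertwines the whole of $G$: the locus of $G$ on which it is $G$-equivariant is a closed subgroup scheme containing the schematically dense family $\ph(\Ga)$, and $G$ is $R$-flat, so this locus has zero-ideal and equals $G$. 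Closure under subobjects is obtained in the same manner, realising a \emph{saturated} $\Ga$-stable submodule as $G$-stable through its (closed) stabiliser subgroup scheme. The hard part will be controlling the \emph{non-saturated} $\Ga$-stable submodules, whose transporter need not be representable by a closed subscheme; this is exactly the point where the careful DVR input is required and where I expect the main obstacle to lie.
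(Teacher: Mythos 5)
Your treatment of parts (1) and (2) is correct but proceeds along a genuinely different path from the paper's. The paper proves (1) and (2) together by a retraction argument: on the generic fibre it takes the smallest $K$-subgroup $B$ of $\Pi_K$ bounding the family $\{u(\ga)_K\}$ (\cite[$\mathrm{VI}_B$, Proposition 7.1]{SGA3}), forms the schematic closure $\ov B\subseteq\Pi$, and uses Corollary \ref{22.02.2017--4} twice to manufacture a section $u_\ps:\Pi\to\ov B$ of the inclusion, forcing $\ov B=\Pi$; reducedness and density then fall out of \cite[$\mathrm{VI}_B$, Proposition 7.1(ii)]{SGA3}. On the special fibre the same trick is run through the Neron blowup of $\Pi$ at the smallest bounding $k$-subgroup, with \cite[Theorem 14.1]{waterhouse} converting the resulting section into faithful flatness on special fibres. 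Your matrix-coefficient argument avoids SGA3 and Neron blowups entirely, and it buys (2) for free from the embedding $F[\Pi_F]\hookrightarrow\prod_\ga F$; it is more self-contained. Two details deserve a line each: (a) you implicitly take the object $V$ carrying a given coefficient to be free over $R$ (you speak of ``the lattice $V\subseteq V_K$'' and use $V^\vee\ot F\simeq (V_F)^\vee$) --- this is harmless because every object of $\rep R\Ga^{\rm tan}$ receives an epimorphism from an object of $\repp R\Ga$ and coefficients factor through such a cover; (b) the closure of $\rep R\Ga^{\rm tan}$ under $\Ga$-stable submodules, which you assert parenthetically, holds because the preimage of such a submodule in a free weak lift is again free, hence is itself a weak lift.

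Part (3), however, contains a genuine gap, and you flag it yourself. What you do establish --- faithful flatness of both fibres of $u_\ph$, equivalently (by \cite[Theorem 14.1]{waterhouse}) injectivity of $K[G]\to K[\Pi]$ and $k[G]\to k[\Pi]$ --- is exactly the point the paper reaches by hand; the ascent from these fibrewise statements to faithful flatness over $R$, which you call ``the main obstacle'' and leave open, is precisely what the paper's citation of \cite[Theorem 4.1.1]{DH14} disposes of: a morphism in $(\bb{FGSch}/R)$ inducing injections on the Hopf algebras of both fibres is faithfully flat. Your proposed substitute --- a Deligne--Milne-style criterion (full faithfulness plus closure under subobjects, as in \cite[Proposition 2.21]{DM82}) transplanted to $R$ --- is indeed delicate for exactly the reason you identify: over a DVR one must control non-saturated $\Ga$-stable submodules, and you do not resolve this. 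As it stands, then, the proposal proves (1) and (2) but not (3); the missing step is not a routine verification you postponed, it is the crux of the statement, and closing it requires either the Dedekind-ring faithful-flatness criterion of \cite{DH14} (applied to the fibrewise injectivity you already have) or an argument of comparable depth.
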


\begin{proof}
(1) and (2).  Let $B$ be the smallest $K$-subgroup scheme of $\Pi_K$ bounding (``majorant'' in French) the family $\{u(\ga)_K\}_{\ga\in\Ga}$ \cite[${\rm VI}_B$, Proposition 7.1, p.384]{SGA3}. Let $i:\ov B\to \Pi$ be the schematic closure of $B$  in $\Pi$ \cite[${\rm IV}_2$, 2.8, 33ff]{EGA}. Note that, for each $\ga\in\Ga$, the arrow $u(\ga):\mathrm{Spec}\,R\to\Pi$ factors as 
\[
\tag{$*$}u(\ga)=i(R)\circ \ps(\ga),
\] 
where $\ps(\ga):\mathrm{Spec}\,R\to\ov B$ is an $R$-point of $\ov B$. In addition, since $i(R):\ov B(R)\to\Pi(R)$ is a monomorphism, $\ps:\Ga\to\ov B(R)$ is a morphism of \emph{groups}. By Corollary \ref{22.02.2017--4}, there exists a unique morphism 
$u_\ps:\Pi\to \ov B$
such that 
\[
\ps(\ga)=u_\ps(R)\circ u(\ga)\tag{$**$}.
\] 
From ($*$) and ($**$) we deduce that $u(\ga)=(i\circ u_\ps)(R)\circ u(\ga)$, and hence $\id_\Pi= i\circ u_\ps$ (due to Corollary \ref{22.02.2017--4}). Therefore $i$ must be an isomorphism. Using \cite[${\rm VI}_B$, Proposition 7.1(ii)]{SGA3}, we conclude that $\Pi_K$ is reduced, and that the set of points associated to $\{u(\ga)_K\}_{\ga\in\Ga}$ is dense in it. In this case, it is obvious that $\{u(\ga)_K\}_{\ga\in\Ga}$ is also schematically dense \cite[${\rm IV}_3$, 11.10.4]{EGA}.

In the same spirit, let $j:H\to\Pi_k$ be the smallest $k$-subgroup  bounding    $\{u(\ga)_k\}_{\ga\in\Ga}$ and let $\ze:\Pi'\to \Pi$ be the Neron blowup of $\Pi$ at $H$. By the definition of a Neron blowup,  for each $\ga\in\Ga$ there exists $u'(\ga):\mathrm{Spec}\,R\to \Pi'$ such that 
\[
\tag{$\dagger$}u(\ga)=\ze(R)\circ u'(\ga).
\] Since $\ze(R):\Pi'(R)\to\Pi(R)$ is a monomorphism of groups, we conclude that $u':\Ga\to\Pi'(R)$ is a homomorphism. Corollary \ref{22.02.2017--4} provides $v:\Pi\to\Pi'$ such that 
\[
\tag{$\ddagger$} u'(\ga)=v(R)\circ u(\ga).
\]
As a consequence of  ($\dagger$) and ($\ddagger$), we have $u(\ga)=(\ze v)(R)\circ u(\ga)$. 
Another application of Corollary \ref{22.02.2017--4} leads to $\ze v=\id_\Pi$. Now, for each $a'\in R[\Pi']$ there exist $m\in\NN$ and $a\in R[\Pi]$ such that $\pi^ma'=\ze^\#(a)$, so that $\pi^mv^\#(a')=a$, which shows that $\pi^ma'=\pi^m\ze^\#(v^\#(a))$ thereby proving that $a'=\ze^\#(v^\#(a))$. (It is equally possible to note that $\ze(A):\Pi'(A)\to\Pi(A)$ is injective for each flat $R$-algebra $A$, and that the equation $\ze v={\rm id}_\Pi$ further implies that $\ze(A)$ is surjective.) Consequently, $\ze$ is an isomorphism and $H=\Pi_k$. As in the previous paragraph, these findings show that $\Pi_k$ is reduced and that $\{u(\ga)_k\}_{\ga\in\Ga}$ is schematically dense.


(3) Because $\ph(\ga)=u_\ph\circ u(\ga)$ for each $\ga$, it follows that the arrows   $K[G]\to K[\Pi]$ and $k[G]\to k[\Pi]$ derived from $u_\ph$ are injective. 
The  result then  follows by applying \cite[Theorem 4.1.1]{DH14}.
\end{proof}

\begin{rmk} Let $\varphi:\Ga\to \Ga'$ be a homomorphism of abstract groups. Then there exists, by the universal property, a unique group scheme homomorphism $\Phi:\Pi\to \Pi'$ rendering the following  diagram commutative
\[\xymatrix{ \Ga\ar[r]^\varphi\ar[d]_u& \Ga'\ar[d]^{u'}\\ \Pi\ar[r]_\Phi& \Pi'.}\]
\end{rmk}

\subsection{The Tannakian envelope of  an infinite cyclic group}\label{23.02.2017--7}
In this section we assume $R=\pos{k}{\pi}$, with $k$ of characteristic zero, and we give ourselves an infinite cyclic group $\Ga$ generated by $\ga$.   In the terminology of Definition \ref{22.02.2017--3}, this is certainly  a Tannakian group (apply Corollary \ref{23.02.2017--2}); let $\Pi$ stand for its Tannakian envelope over $R$. 

We wish to show that, despite the very simple nature of $\Ga$, its envelope \emph{does} allow rather ``large'' faithfully flat quotients. A precise statement is Proposition \ref{13.09.2017--1} below.

Let $G=\GG_{a,R}\ti\GG_{m,R}$ have ring of functions $R[G]=R[x,y^\pm]$. 
In the $\pi$-adic completion  $R\langle G\rangle$  of $R[G]$, we single out the element 
\[
\Ph=y-\exp(\pi x)\in R\langle G\rangle.
\]
Obviously, the closed formal subscheme  $\g H$ cut out by the ideal $(\Ph)\subset R\langle G\rangle$ is a formal subgroup, isomorphic to the $\pi$-adic completion of $\GG_{a,R}$; let $\cn$ stand for the Neron blowup of $\g H$ (see Section \ref{23.02.2017--3}). By definition, \[R[\cn]=\lid_n R[G][\pi^{-{n-1}}\Ph_n],\] 
where \[\Ph_n=y-\left(1+\pi x+\fr{\pi^2}{2!}x^2+\cdots+\fr{\pi^n}{n!}x^n\right).\]

We now introduce the ``evaluation'' at the point $P=(1,e^\pi)\in G(R)$: 
\[
\mm{ev}_P:R[G]\aro R,\quad \begin{array}{cc} x&\longmapsto 1\\ y&\longmapsto e^\pi.
\end{array} 
\]
Since $\mm{ev}_P(\Ph_n) \equiv0\mod\pi^{n+1}$,
 $P$ is a point of $\cn$. 
Let $\ph:\Ga\to\cn(R)$ send $\ga$ to $P$. According to Corollary \ref{22.02.2017--4}, there exists a morphism $u_\ph:\Pi\to \cn$
and a commutative diagram
\[
\xymatrix{\Pi(R)\ar[r]^{u_\ph(R)}& \cn(R)\\ \Ga.\ar[ru]_\ph\ar[u]^u&}
\]
\begin{prp}\label{13.09.2017--1}  The morphism $u_\ph:\Pi\to\cn$ is faithfully flat. 
\end{prp}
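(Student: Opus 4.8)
The plan is to reduce everything to Proposition~\ref{23.02.2017--4}(3). Since $\cn$ is a flat affine group scheme, that proposition tells us that $u_\ph$ is faithfully flat as soon as $\ph:\Ga\to\cn(R)$ is \emph{dense}; so the whole task is to verify density on both fibres. As $\Ga$ is generated by $\ga$ and $\ph(\ga)=P$, the relevant families are the reductions of the powers $P^m=(m,e^{m\pi})$ ($m\in\ZZ$), regarded inside the two fibres of $\cn$. I will check below that both fibres of $\cn$ are \emph{reduced}, so that in each case schematic density follows from ordinary Zariski density by \cite[${\rm IV}_3$, 11.10.4]{EGA}; it therefore remains to prove topological density of $\{P^m_k\}$ in $\cn_k$ and of $\{P^m_K\}$ in $\cn_K$.

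First I would dispose of the special fibre. By \cite[Corollary 5.11]{DHdS15} the map $\cn\to G$ identifies $\cn\ot k$ with $H_0=\g H\ot k$; since $\Ph\equiv y-1\pmod\pi$, this $H_0$ is the closed subgroup $\{y=1\}$ of $G_k$, so $\cn_k\cong\GG_{a,k}$ with coordinate $x$. Reducing $P^m=(m,e^{m\pi})$ modulo $\pi$ gives the $k$-point $(m,1)$, which lies on $\{y=1\}$ and whose $x$-coordinate is the image of the integer $m$ in $k$. As $\mathrm{char}\,k=0$, the subset $\{m:m\in\ZZ\}\subset k$ is infinite, hence Zariski dense in $\GG_{a,k}\cong\af^1_k$, and a fortiori schematically dense.

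The generic fibre is the heart of the matter and the step I expect to be the main obstacle. Here $\cn\to G$ is an isomorphism on generic fibres, so $\cn_K\cong G_K=\GG_{a,K}\ti\GG_{m,K}$ and $P^m_K$ becomes $(m,q^m)$ with $q=e^\pi$. I must show that the only Laurent polynomial $f\in K[x,y^{\pm1}]$ vanishing at all the points $(m,q^m)$ is $f=0$. Writing $f=\sum_j p_j(x)\,y^j$ (a finite sum with $p_j\in K[x]$), this vanishing reads $\sum_j p_j(m)\,(q^j)^m=0$ for every $m\in\ZZ$. The key arithmetic input is that $q=\exp(\pi)$ is \emph{not} a root of unity in $K$---indeed $q^N=\exp(N\pi)=1+N\pi+\cdots\neq1$ for $N\neq0$---so the bases $q^j$ are pairwise distinct and nonzero. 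The classical linear independence over $K$ of the functions $m\mapsto m^a q^{jm}$ (valid because $\mathrm{char}\,K=0$) then forces each $p_j$ to be zero, whence $f=0$. Consequently $\{(m,q^m)\}$ is Zariski dense in $\GG_{a,K}\ti\GG_{m,K}$, and since the latter is reduced, schematically dense.

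With density of $\ph$ established on both fibres, Proposition~\ref{23.02.2017--4}(3) applies and shows that $u_\ph:\Pi\to\cn$ is faithfully flat. I stress that the only genuinely delicate point is the generic-fibre density; once the identifications $\cn_k\cong\GG_{a,k}$ and $\cn_K\cong G_K$ are recorded (from \cite[Corollary 5.11]{DHdS15} and the defining property of the blowup), the special fibre and the passages between topological and schematic density are routine.
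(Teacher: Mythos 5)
Your proof is correct, and it shares the paper's overall skeleton: reduce to Proposition \ref{23.02.2017--4}(3) and verify density of $\ph$ fibre by fibre, with an essentially identical treatment of the special fibre via \cite[Corollary 5.11]{DHdS15} and the characteristic-zero hypothesis. Where you genuinely diverge is on the generic fibre, which you rightly identify as the heart of the matter. The paper argues group-theoretically: it forms the Zariski closure $C\subset G_K$ of the abstract subgroup generated by $P$, observes that $C$ is a closed subgroup scheme whose two projections $C\to\GG_{a,K}$ and $C\to\GG_{m,K}$ are faithfully flat, and concludes by a dimension count ($\dim C\ge 2$, hence $C=G_K$), citing standard facts from Waterhouse. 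You instead prove density by direct computation: a Laurent polynomial $\sum_j p_j(x)y^j$ vanishing at all the points $(m,q^m)$ must vanish identically, because $q=e^\pi$ is not a root of unity (so the bases $q^j$ are pairwise distinct) and exponential-polynomial sequences $m\mapsto m^a q^{jm}$ with distinct nonzero bases are linearly independent in characteristic zero. Your route is more elementary and self-contained---it avoids the facts that closures of abstract subgroups are subgroup schemes and the dimension theory of faithfully flat morphisms---but it is tailored to the specific ambient group $\GG_{a,K}\ti\GG_{m,K}$, whereas the paper's argument (a one-dimensional group cannot surject onto both $\GG_a$ and $\GG_m$) transfers to other situations with little change. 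Note that both proofs rest on the same arithmetic input, namely that $e^\pi$ is not a root of unity: you state it explicitly, while in the paper it is hidden in the claim that $C\to\GG_{m,K}$ is faithfully flat.
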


\begin{proof} According to \cite[Corollary 5.11]{DHdS15},  $\cn\ot k$ is just $\g H\ot k\simeq\GG_{a,k}$; as the image of $P$ in $\cn(k)$ is not the neutral element, the assumption on the characteristic  proves that  the subgroup it generates is schematically dense. On the generic fibre, the Zariski closure $C$ of the subgroup generated by $P$ must be of dimension at least two since both restrictions $C\to \GG_{m,K}$ and $C\to\GG_{a,K}$ are faithfully flat (the standard facts employed here are in \cite{waterhouse}, see the Corollary on p.65 and the Theorem on the bottom of p.114). This implies that $C=G_K$ and 
Proposition \ref{23.02.2017--4} then finishes the proof.  
\end{proof}

\begin{cor}\label{12.10.2018--2}
The group scheme $\Pi$ is not strictly pro-algebraic (in the sense of  Question (\textbf{SA}) from the Introduction). In fact, the Tannakian envelope $\Pi^\star$ of any abstract group $\Ga^\star$ admitting $\Ga$ as a quotient fails to be strictly pro-algebraic. 
\end{cor}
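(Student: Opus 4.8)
The plan is to leverage Proposition \ref{13.09.2017--1}, which exhibits a ``large'' faithfully flat quotient $u_\ph:\Pi\to\cn$ where $\cn$ is a Neron blowup of a formal subgroup. The strategy is to argue that the existence of such a quotient is precisely the obstruction to strict pro-algebraicity, and then to propagate this defect along surjections of abstract groups.

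First I would recall what strict pro-algebraicity demands: that $\Pi$ be expressible as $\lip_i G_i$ with each transition map $G_j\to G_i$ a \emph{faithfully flat} morphism between flat group schemes \emph{of finite type}. The key tension is that $\cn=\cn_{\g H}^\infty(G)$ is genuinely of infinite type --- its Hopf algebra is the direct limit $\lid_n R[G][\pi^{-n-1}\Ph_n]$, which is not finitely generated over $R$. So the plan is to suppose, for contradiction, that $\Pi\simeq\lip_i G_i$ is a strict pro-algebraic presentation. Composing the faithfully flat quotient $u_\ph:\Pi\to\cn$ with this presentation, I would examine how $\cn$ sits as a quotient of the $G_i$. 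The essential point to extract is that a faithfully flat image of $\Pi$ that happens to be of finite type would have to factor through one of the finite-type $G_i$; but $\cn$ is \emph{not} of finite type. More carefully, since $\cn$ is a quotient of $\Pi$, any presentation of $\Pi$ by finite-type pieces would force $\cn$ itself to be a cofiltered limit of finite-type faithfully flat quotients of the $G_i$ --- and I would argue that the standard sequence analysis from Section \ref{06.02.2017--1}, together with the fact that $\cn$ has a nontrivial infinite-type structure coming from an \emph{isomorphism} of centres $\ph_n:B_{n+1}\to B_n$ (the situation of Theorem \ref{20.02.2017--2}(1)), obstructs writing $\cn$ with faithfully flat transition maps of finite type. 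The heuristic is that Corollary \ref{29.09.2017--1} already detected the incompatibility: a non-finite-type blowup of this kind carries a submodule isomorphic to $K$ in its coordinate ring, which cannot occur in a projective limit of finite-type faithfully flat group schemes.

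For the second assertion --- that $\Pi^\star$ also fails for any $\Ga^\star$ admitting $\Ga$ as a quotient --- I would use functoriality of the Tannakian envelope. A surjection $\Ga^\star\twoheadrightarrow\Ga$ induces, by the remark following Proposition \ref{23.02.2017--4}, a morphism $\Phi:\Pi^\star\to\Pi$ of group schemes. The plan is to show $\Phi$ is faithfully flat: since $\Ga^\star\to\Ga$ is surjective, the composite $\Ga^\star\to\Ga\arou{u}\Pi(R)$ is a dense morphism (density of $u$ being Proposition \ref{23.02.2017--4}(1), and surjectivity of $\Ga^\star\to\Ga$ preserving schematic density of the image), so Proposition \ref{23.02.2017--4}(3) gives that the associated $u_{\ph^\star}:\Pi^\star\to\Pi$ is faithfully flat. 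Composing, $u_\ph\circ\Phi:\Pi^\star\to\cn$ is again faithfully flat, exhibiting the same infinite-type quotient $\cn$ as a faithfully flat quotient of $\Pi^\star$. The same contradiction as before then applies verbatim.

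The main obstacle I anticipate is making precise the claim that ``a strictly pro-algebraic group cannot have an infinite-type faithfully flat quotient of the type $\cn$.'' This requires a clean lemma to the effect that if $\Pi=\lip_i G_i$ with faithfully flat finite-type transition maps, then every faithfully flat quotient $\Pi\to Q$ with $Q$ affine flat must itself be a cofiltered limit $\lip$ of faithfully flat finite-type quotients of the $G_i$, and in particular $R[Q]$ cannot contain a copy of $K$. Establishing this cleanly --- controlling how quotients interact with cofiltered limits in $(\bb{FGSch}/R)$ and importing the structural obstruction from Corollary \ref{29.09.2017--1} --- is where the real work lies; the rest is a formal chase through the functoriality already set up in Section \ref{abstract_groups}.
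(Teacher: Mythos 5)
Your overall skeleton---compose the faithfully flat quotient $u_\ph:\Pi\to\cn$ of Proposition \ref{13.09.2017--1} with a hypothetical presentation $\Pi=\lip_iG_i$ and derive a contradiction from the infinite-type nature of $\cn$, then propagate to $\Pi^\star$ by functoriality---is the right one, and your treatment of the second assertion (the composite $\Ga^\star\to\Ga\to\Pi(R)$ has the same image as $u$, hence is dense, so Proposition \ref{23.02.2017--4}(3) makes $\Pi^\star\to\Pi$ faithfully flat; the paper instead cites \cite[Proposition 3.2.1(ii)]{DH14}) is sound. The genuine gap is in your contradiction mechanism. Your argument rests on the claim that $R[\cn]$ contains an $R$-submodule isomorphic to $K$, ``as detected by Corollary \ref{29.09.2017--1}.'' That corollary does not apply here, and the claim is false. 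Corollary \ref{29.09.2017--1} concerns blowups sitting over a \emph{unipotent} group $U$, and its proof hinges on Proposition \ref{11.07.2017--1}: in characteristic $(0,0)$ every flat closed formal subgroup of $\wh U$ is algebraizable, and it is the ideal of the algebraic subgroup $H$ that supplies the infinitely $\pi$-divisible elements. In Section \ref{23.02.2017--7} the ambient group is $G=\GG_{a,R}\ti\GG_{m,R}$, which is \emph{not} unipotent, and the formal subgroup $\g H$ cut out by $\Ph=y-\exp(\pi x)$ is precisely \emph{not} algebraizable: no nonzero $f\in K[x,y,y^{-1}]$ vanishes on the formal graph of the exponential. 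Indeed, any $f\in\bigcap_m\pi^mR[\cn]$ must vanish at every $R$-point $(a,\exp(\pi a))$ of $\cn$, since evaluation lands in $\bigcap_m\pi^mR=0$; writing $f=\sum c_{ij}x^iy^j$, the vanishing of $\sum c_{ij}a^ie^{j\pi a}$ for all $a\in R$ together with the linear independence of the functions $x^ie^{j\pi x}$ forces $f=0$. So $\g d(\cn)=0$, and no contradiction of the kind you envisage can be extracted; this is exactly what makes this example different from the unipotent situation of Corollary \ref{29.09.2017--1}.

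The obstruction to use instead is noetherianity, not divisibility, and your anticipated ``heavy'' lemma (that every faithfully flat quotient of a strictly pro-algebraic group is again a cofiltered limit of finite-type faithfully flat quotients) is not needed. From $\Pi=\lip_iG_i$ one gets a \emph{single} factorization $u_\ph=\chi\circ q$ with $q:\Pi\to\cg$ faithfully flat and $\cg$ of finite type, exactly as in the first, correct, part of your outline: the three generators $x,y,y^{-1}$ of $K[\cn]$ lie in some $R[G_i]=:R[\cg]$, and since $R[\cg]$ is saturated in $R[\Pi]$ it follows that $R[\cn]\subset R[\cg]$. Then $\chi$ is faithfully flat, so noetherianity descends from $R[\cg]$ to $R[\cn]$ by \cite[Exercise 7.9, p.~53]{matsumura}. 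The contradiction is that $R[\cn]$ is \emph{not} noetherian: setting $\Ps_n=\pi^{-n-1}\Ph_n$ and letting $\g a$ be the augmentation ideal, one has $\pi\Ps_{n+1}\equiv\Ps_n\bmod\g a^2$, so $\g a/\g a^2$ could only be finitely generated if $y-1-\pi x\in\g a^2$, which is impossible. Unless you replace your ``copy of $K$'' step by this (or some other) proof that $R[\cn]$ is not noetherian, the argument does not close.
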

\begin{proof}We assume that $\Pi$ is strictly pro-algebraic and write $R[\Pi]=\lid_iR[\cg_i]$, where each $\cg_i\in(\bb{FGSch}/R)$ is of finite type, and each arrow $R[\cg_i]\to R[\cg_j]$ is faithfully flat. In particular, each arrow $R[\cg_i]\to R[\Pi]$ is faithfully flat (since tensor products commute with direct limits), and 
$R[\cg_i]$ is saturated in both $R[\cg_j]$ and  $R[\Pi]$. Let $i_0$ be such that $x,y,y^{-1}\in R[\cg_{i_0}]$ (we identify $R[\cn]$ with its image in $R[\Pi]$). As $K[\cn]$ is generated by $\{x,y,y^{-1}\}$ and $R[\cg_{i_0}]$ is saturated in $R[\Pi]$, we arrive at a commutative diagram 
 factoring $u_\ph$:
\[
\xymatrix{&\cg_{i_0}\ar[dr]^{\chi}&  \\ \ar[ru]^q\Pi\ar[rr]_{u_\ph}&&\cn.}
\]
It then follows, since $u_\ph$ and $q$ are faithfully flat, that $\chi$ is faithfully flat \cite[p. 46]{matsumura}. 
Together with Exercise 7.9 on p. 53 of \cite{matsumura}, this implies that $R[\cn]$ is a noetherian ring, something which is certainly false. (To verify this last claim, one proceeds as follows. Let $\g a$ be the augmentation ideal of $R[\cn]$, and let $\Ps_n=\pi^{-n-1}\Ph_n$. Then   $\pi\Ps_{n+1}\equiv\Ps_n\mod\g a^2$ for all $n\ge1$, so that  $\g a/\g a^2$ is either not finitely generated over $R=R[\cn]/\g a$, or  $y-1-\pi x\in\g a^2$. But this latter condition is impossible.)

To verify the second statement, we observe that if  $\ps:\Ga^\star\to\Ga$ is an epimorphism, then the induced arrow $\Ps:\Pi^\star\to\Pi$ is faithfully flat due to \cite[Proposition 3.2.1(ii)]{DH14}. The exact same proof as above now proves the second statement. 
\end{proof}

\section{Neron blowups of formal subgroup schemes in differential Galois theory}\label{24.02.2017--1}

In this section, we set out  to investigate whether blowups of closed formal subgroup schemes, as in Section \ref{23.02.2017--3}, appear in differential Galois theory over $R$. (This theory is to be understood as the one explained in \cite[Section 7]{DHdS15}.) Our strategy is to restrict attention to the discrete valuation ring  $\pos{\CC}{t}$. We shall see that through a ``Riemann-Hilbert correspondence'', which is Theorem \ref{Riemann-Hilbert} below, we are  able to transplant the results of Section \ref{23.02.2017--7} concerning the category of representations of an abstract group to swiftly arrive at a conclusion: see Corollary \ref{23.02.2017--6}. The proof of Theorem \ref{Riemann-Hilbert} is routine and employs Serre's GAGA,  Grothendieck's Existence Theorem in formal Geometry (GFGA), and Deligne's  dictionary \cite[I.2]{Del70}. 


\subsection{Preliminaries on $\cd$-modules}\label{10.02.2017--2}
We now review some useful notions concerning $\cd$-modules.  These are employed to establish an algebraization result, Proposition \ref{03.07.2017--1}, further ahead.


Let $T$ be a noetherian scheme and $Y$  a smooth $T$-scheme; we write $\cd(Y/T)$ for the ring of differential operators described in \cite[${\rm IV}_4$, 16.8]{EGA} and \cite[\S2]{BO}. Following the notation of \cite[\S7]{DHdS15}, we let $\modules{\cd(Y/T)}$ stand for the category of (sheaves of) $\cd(Y/T)$-modules on $Y$ whose underlying $\co_Y$-module is coherent. If $T$ is not a $\QQ$-scheme, then the ring $\cd(Y/T)$ is usually not finitely generated over $\co_Y$, so that a fundamental tool in dealing with $\cd$-modules is Grothendieck's notion of stratification. 

Let $\cp_{Y/T}^\nu$ stand for the sheaf of principal parts of order $\nu$ of $Y\to T$ \cite[${\rm IV}_4$, 16.3.1, p.14]{EGA}. 
The scheme $(|Y|,\cp_{Y/T}^\nu)$ \cite[${\rm IV}_4$, 16.1]{EGA}, usually named the \emph{scheme of principal parts of order $\nu$}, is denoted by $P_{Y/T}^\nu$. It comes with two projections $p_0,p_1:P_{Y/T}^\nu\to Y$ and a closed immersion $\De:Y\to P_{Y/T}^\nu$, and we follow the convention of EGA fixing the first projection as the structural one, while the other is supplementary. 
 
Given $\ce\in\modules{\cd(Y/T)}$, for any $\nu\in\NN$ we can associate an isomorphism of $\co_{P^\nu_{Y/T}}$-modules 
\[\te(\nu):p_1^*(\ce)\arou\sim p_0^*(\ce)\]
which, when pulled-back along $\De$ gives back the identity. These are are called {\it Taylor isomorphisms}  of order $\nu$.
 Clearly, using the obvious closed immersions $P_{Y/T}^{\nu}\to P_{Y/T}^{\nu+1}$, we obtain the notion of a  compatible family of Taylor isomorphisms.

These considerations prompt the definition of a pseudo-stratification of a coherent module $\cf$:  It is a \emph{compatible} family of isomorphisms  
\[
\si(\nu):p_1^*(\cf)\arou\sim p_0^*(\cf),\quad\nu=0,\ldots ,
\]
which induce the identity when pulled-back along the diagonal. The pseudo-stratification $\{\si(\nu)\}_{\nu\in\NN}$ is a stratification if the isomorphisms satisfy, in addition, a cocycle condition  \cite[Definition 2.10]{BO}.  Coherent modules endowed with stratifications constitute a category, denoted by  $\mathbf{str}(Y/T)$. 
The main point of these definitions is that the Taylor isomorphisms define an equivalence of categories   $\modules{\cd(Y/T)}\to\mathbf{str}(Y/T)$ which preserves all the underlying coherent modules and morphisms   \cite[2.11]{BO}.

Although stratifications are a more intricate version of the   natural concept of $\cd$-module, their  functoriality properties are most welcome. From a commutative diagram of schemes  
\[
\xymatrix{Z\ar[r]^\ph\ar[d]&\ar[d]Y\\ 
U\ar[r] & T,}
\] 
we obtain another such diagram  
\[
\xymatrix{P_{Z}^\nu\ar[d]_{p_{i,Z}}\ar[rr]^{P^\nu(\ph)} && P_{Y}^\nu\ar[d]^{p_{i,Y}}\\
\ar[d]  Z\ar[rr]_{\ph}  && Y\ar[d]\\  U\ar[rr]&& T,}
\]
where $P^\nu(\ph)$ is the morphism defined by $|\ph|:|Z|\to|Y|$ and the arrow of $\co_Z$-algebras (abusively denoted by the same symbol)
\[
P^\nu(\ph):\co_Z\otu{\co_Y}\cp^\nu_{Y/T}\aro \cp^\nu_{Z/U},
\] which is explained on  \cite[${\rm IV}_4$, 16.4.3, 17ff]{EGA}.
Consequently, we have an isomorphism of functors 
\[
P^\nu(\ph)^*\circ p_{i,Y}^*\simeq p_{i,Z}^*\circ\ph^*,
\]
which allows us to prolong $\ph^*:\bb{Coh}(Y)\to\bb{Coh}(Z)$ into   
\[
\ph^\#:\bb{str}(Y/T)\aro\bb{str}(Z/U),\quad(\ce,\{\te(\nu)\})\longmapsto(\ph^*\ce,\{\ph^\#\te(\nu)\} );
\]
just define  $\ph^\#\te(\nu)$ by decreeing that 
\[
\xymatrix{P^\nu(\ph)^*p_{1,Y}^*(\ce)\ar[d]_{\sim}\ar[rr]^{P^\nu(\ph)^*\te(\nu)}&&P^\nu(\ph)^*p_{0,Y}^*(\ce)\ar[d]^{\sim}\\
p_{1,Z}^*\ph^*(\ce)\ar[rr]_{\ph^\# \te(\nu)}&& p_{0,Z}^*\ph^*(\ce)
}
\]
commutes. Using the equivalence $\modules{\cd(Y/T)}\simeq {\bf str}(Y/T)$ mentioned above, we can also prolong $\ph^*$ to a functor $\ph^\#:\modules {\cd(Y/T)}\to\modules{\cd(Z/U)}$.

\begin{rmk}
Similar notions hold  for the case where $g:Y\to T$ is replaced by   a smooth morphism of \emph{complex analytic spaces} \cite[I.2.22]{Del70}. This being so, we adopt similar notations.   
\end{rmk}

\subsection{An algebraization result}\label{20.09.2017--2} In this section, we assume that $R$ is complete. Let \[f:X\aro S\] be a smooth morphism (recall that $S={\rm Spec}\,R$). For every  $n\in\NN$, we obtain  cartesian commutative diagrams 
\[
\xymatrix
{
X_{n}\ar[d]\ar[r]^-{u_{n}}&\ar[d]X_{n+1}\\
S_{n}\ar[r]&S_{n+1}}\]
and
\[
\xymatrix
{
X_{n}\ar[d]\ar[r]^-{v_{n}}&\ar[d]X\\
S_{n}\ar[r]&S, 
}
\]
where the vertical arrows are the evident closed embeddings. In this case, the arrows  
\begin{equation}\label{19.09.2017--1}
P^\nu(v_n): \co_{X_n}\otu{\co_X}\cp_{X/S}^\nu\aro\cp^\nu_{X_n/S_n}
\end{equation}
are isomorphisms \cite[${\rm IV}_4$, 16.4.5]{EGA}.  Hence, the \emph{dual} arrows 
\[
 \cd(X_n/S_n)\aro \co_{X_n}\otu{\co_X}\cd(X/S)
\]
are isomorphisms of $\co_{X_n}$-modules (here we used that the modules of principal parts are locally free \cite[Proposition 2.6]{BO}). This  particular situation allows us to construct an arrow of  $\co_X$-modules  
\begin{equation}\label{19.09.2017--2}
\rho_n:\cd(X/S)\aro\cd(X_n/S_n).
\end{equation}
A moment's thought proves that for any differential operator (on some open of $X$) $\pd:\co_V\to\co_V$, the operator $\rho_n(\pd):\co_{V_n}\to\co_{V_n}$ is just the reduction of $\pd$ modulo $\pi^{n+1}$, so that $\rho_n$ is a morphisms of $v_n^{-1}\co_X$-algebras. Moreover, $\rho_n$ allows us to identify $\cd(X/S)/(\pi^{n+1})$ and  $\cd(X_n/S_n)$ in such a way that the functor $v_n^\#:\modules{\cd(X/S)}\to\modules{\cd(X_n/S_n)}$ is  the obvious one.

\begin{dfn}\label{20.09.2017--1}Denote by $\bb{Coh}^\wedge(X)$ the category constructed as follows. Objects are sequences $(\ce_n,\al_n)$, where  $\ce_n\in\bb{Coh}(X_n)$, and  $\al_n:u_n^*\ce_{n+1}\to\ce_n$ an isomorphism. Morphisms between $(\ce_n,\al_n)$ and $(\cf_n,\be_n)$ are families $\ph_n:\ce_n\to\cf_n$ fulfilling $\ph_n\circ\al_n=\be_n\circ u_n^*(\ph_n)$, see \cite[8.1.4]{Illusie}. (Needless to say, this is the category of coherent modules on the formal completion.)
We let $\modules{\cd(X/S)}^\wedge$ be constructed analogously from $\modules{\cd(X/S)}$ and the functors $u_n^\#$. 
\end{dfn} 

Let 
\[ \bb{Coh}(X)\arou\Ph\bb{Coh}(X)^\wedge\quad \text{and} \quad\modules{\cd(X/S)}\arou{\mm D\Ph}\modules{\cd(X/S)}^\wedge\] be the obvious functors.
Grothendieck's existence theorem \cite[Theorem 8.4.2]{Illusie}, henceforth called GFGA, says that \emph{if $f$ is proper}, then $\Ph$ is an equivalence of $R$-linear $\ot$-categories. Using the notion of stratification, we have:

\begin{prp}\label{03.07.2017--1}Assume that $f:X\to S$ is proper. Then, $\mm D\Ph$ 
is an equivalence.  
\end{prp}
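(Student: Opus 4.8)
The plan is to transport the statement, via the equivalence $\modules{\cd(X/S)}\simeq\mathbf{str}(X/S)$ of \cite[2.11]{BO}, into an assertion about stratified modules, and then to apply GFGA repeatedly. Since the equivalence $\modules{\cd(X_n/S_n)}\simeq\mathbf{str}(X_n/S_n)$ is natural and compatible with the transition functors $u_n^\#$ (which correspond to pullback of stratifications), it induces an equivalence $\modules{\cd(X/S)}^\wedge\simeq\mathbf{str}(X/S)^\wedge$ between the completed categories built as in Definition \ref{20.09.2017--1}, so it suffices to show that the completion functor on stratified modules is an equivalence. The observation that makes this work is that a stratification on $\ce$ is nothing but a compatible, normalized, cocycle-satisfying family of isomorphisms $\te(\nu)\colon p_1^*\ce\arou{\sim}p_0^*\ce$ living on the schemes of principal parts $P^\nu_{X/S}$; and since $\cp^\nu_{X/S}$ is a coherent, locally free $\co_X$-module \cite[Proposition 2.6]{BO}, the first projection $p_0\colon P^\nu_{X/S}\to X$ is finite, so that $P^\nu_{X/S}\to X\to S$ is proper. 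Consequently GFGA holds for $X$ and for every $P^\nu_{X/S}$. Moreover, \eqref{19.09.2017--1} identifies the reduction of $P^\nu_{X/S}$ modulo $\pi^{n+1}$ with $P^\nu_{X_n/S_n}$, so the completion of $P^\nu_{X/S}$ is the formal scheme attached to the system $(P^\nu_{X_n/S_n})_n$, and $\Ph$ commutes with the pullbacks $p_0^*,p_1^*$ and with the structural morphisms relating the various $P^\nu$.

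For full faithfulness, GFGA on $X$ gives that the map on underlying coherent modules $\mathrm{Hom}_{\mathbf{Coh}(X)}(\ce,\cf)\to\mathrm{Hom}(\Ph\ce,\Ph\cf)$ is a bijection. A morphism $\ce\to\cf$ respects the two stratifications precisely when a certain square on each $P^\nu_{X/S}$ commutes; since $\Ph_{P^\nu_{X/S}}$ is faithful (GFGA on $P^\nu_{X/S}$), that square commutes over $X$ if and only if it commutes after completion. Hence a morphism lifts a given morphism of formal stratified modules if and only if it is itself a morphism of stratified modules, which yields full faithfulness of $\mm D\Ph$.

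For essential surjectivity, start from an object $(\wh\ce,\{\wh\te(\nu)\})$ of $\mathbf{str}(X/S)^\wedge$. GFGA on $X$ produces a unique $\ce\in\mathbf{Coh}(X)$ with $\Ph\ce\simeq\wh\ce$. Because $\Ph$ commutes with $p_i^*$, each $\wh\te(\nu)$ is an isomorphism $\Ph_{P^\nu_{X/S}}(p_1^*\ce)\to\Ph_{P^\nu_{X/S}}(p_0^*\ce)$, and GFGA on $P^\nu_{X/S}$ supplies a unique isomorphism $\te(\nu)\colon p_1^*\ce\arou{\sim}p_0^*\ce$ completing to $\wh\te(\nu)$. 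The compatibility of the family $\{\te(\nu)\}$, its normalization along the diagonal $\De$, and the cocycle identity are equalities between morphisms of coherent sheaves on the principal-parts schemes (all proper over $S$); since each relevant GFGA functor is faithful and these identities hold for $\{\wh\te(\nu)\}$, they hold for $\{\te(\nu)\}$ as well. Thus $(\ce,\{\te(\nu)\})$ is a genuine stratified module mapping to the given object.

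The main obstacle is bookkeeping rather than a deep difficulty: one must make certain that completion commutes with the pullbacks $p_0^*,p_1^*$ and with the transition maps among the $P^\nu_{X/S}$, so that the formal datum $\wh\te(\nu)$ really corresponds to data of the form $\Ph(\te(\nu))$; this rests on \eqref{19.09.2017--1} together with the standard fact that completion commutes with pullback along a finite morphism. The subtlest point is the cocycle condition, which is phrased on a higher scheme of principal parts (a fiber product of the $P^\nu$ over $X$); one checks that this scheme is again finite over $X$, hence proper over $S$, so that GFGA and its faithfulness apply there too and transfer the cocycle identity from the formal level down to $X$.
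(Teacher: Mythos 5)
Your proof is correct, but it is not the route the paper takes --- it is essentially the ``obvious proof'' that the authors explicitly say they ``choose to avoid, so that the number of verifications relying on canonical identifications is reduced.'' Both arguments share the same opening moves: GFGA algebraizes the underlying coherent module on the proper scheme $X$, and, via the identification \eqref{19.09.2017--1} of $\co_{X_n}\ot_{\co_X}\cp^\nu_{X/S}$ with $\cp^\nu_{X_n/S_n}$, GFGA applied to the proper $S$-scheme $P^\nu_{X/S}$ (finite over $X$, since $\cp^\nu_{X/S}$ is locally free of finite rank) algebraizes each Taylor isomorphism. The divergence lies in how one then certifies that the algebraized data really is an object, or morphism, of $\modules{\cd(X/S)}$. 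You stay inside the category of stratifications and transfer every identity --- compatibility in $\nu$, normalization along $\De$, and the cocycle condition on the triple principal-parts scheme --- from the formal level down to $X$ by faithfulness of the relevant GFGA functors; this is sound, since each of those schemes is again finite over $X$ and hence proper over $S$, but it carries exactly the bookkeeping burden you acknowledge: one must track the canonical identifications between completion and the various pullbacks $p_i^*$, $p_{ij}^*$. The paper instead never verifies the cocycle condition at all: it uses the algebraized Taylor isomorphism $\te(\nu)$ to \emph{define} the action $\na_{\nu}(\pd)(e)=(\id\ot\pd)\left[\te(\nu)(1\ot e)\right]$ of a differential operator, and then checks the module axioms (and, for full faithfulness, the $\cd$-linearity of an algebraized morphism) by reducing modulo $\pi^{n+1}$ and invoking injectivity of $\ce(V)\to\lip_n\ce_n(V\cap X_0)$ on affine opens $V$ chosen via Lemma \ref{14.09.2017--1}, an associated-points argument guaranteeing $\pi$-adic separatedness of sections. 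What your approach buys is uniformity: everything is a formal consequence of GFGA faithfulness on proper schemes, with no local separatedness lemma and no recourse to choosing special affine covers. What the paper's approach buys is the avoidance of the higher principal-parts scheme and of the chain of canonical identifications, at the price of the local Lemma \ref{14.09.2017--1}.
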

 
\begin{proof}One obvious proof is to apply GFGA to the  isomorphisms of coherent sheaves on the proper $S$-schemes $P^\nu_{X/S}$ (the Taylor isomorphisms) and then prove that these define a stratification. 
For the sake of clarity, we choose to avoid the second step, so that the number of verifications relying on canonical identifications is reduced. In taking this approach, we shall require Lemma \ref{14.09.2017--1} below; its  proof is simple and we leave it to the reader.

\emph{Essential surjectivity:} Due to GFGA, we need to prove the following
\begin{claim*}  Let $\ce\in\bb{Coh}(X)$, and assume that (i) each $\ce_n:=\co_{X_n}\ot \ce$ has the structure of a $\cd(X_n/S_n)$-module, and (ii) the canonical isomorphisms $\al_n:u_n^*\ce_{n+1}\to\ce_n$ preserve these structures. Then $\ce$ carries a $\cd(X/S)$-module structure, and $\mm D\Ph(\ce)=(\ce_n,\al_n)$.

\end{claim*}

Let $\nu\in\NN$ be fixed, and for each $n\in\NN$, write 
\[
\te(\nu)_{n}:p_{1,X_n}^*(\ce_n)\aro p_{0,X_n}^*(\ce_n)
\] 
for the Taylor isomorphism of order $\nu$ associated to the $\cd(X_n/S_n)$-module $\ce_n$. The fact that $\al_n$ is an isomorphism of stratified modules implies,  after the necessary identifications, that 
\begin{equation}\label{26.01.2018--2}
P^\nu(u_n)^*(\te(\nu)_{n+1})=\te(\nu)_n,\quad\text{ for all $n$.}
\end{equation}                                                                                                                                                                                                                                                                                                                                                                                                                                                                                                                                                                                                                                                                                                                  By the isomorphism \eqref{19.09.2017--1}, we can  apply GFGA  to the proper $S$-scheme   $P_X^\nu$ to obtain an isomorphism of $\co_{P^\nu_X}$-modules
\[
\te(\nu):p_{1,X}^*(\ce)\aro p_{0,X}^*(\ce)\]
such that 
\[
P^\nu(v_n)^*(\te(\nu))=\te(\nu)_n\quad\text{for all $n$.}
\]

Let $V\subset X$ be an open subset constructed from $\ce$ as in Lemma \ref{14.09.2017--1} below, and let $\pd\in \cd(X/S)(V)$ be of order $\le\nu$. Let $e\in\ce(V)$. We define 
\[\na_{\nu}(\pd)(e):= (\id\ot\pd)\left[\te(\nu)(1\ot e)\right],\] 
where we remind the reader that $p_{0,X}^*(\ce)=\ce\ot \cp_{X}^\nu$ and  $p_{1,X}^*(\ce)=   \cp_{X}^\nu\ot\ce$ as sheaves on $|X|$. At this point, $\na_{\nu}(\pd)(e)$ is just a name for a section of $\ce(V)$. We have $\co(V)$-linear maps  $q_n:\ce(V)\to\ce_n(V\cap X_0)$ and $\rho_n$ (as in eq. \eqref{19.09.2017--2})   and it is clear from the definition and eq. \eqref{26.01.2018--2} that 
\begin{equation}\label{26.01.2018--1}
q_n[\na_{\nu}(\pd)(e)]=\rho_n(\pd)\po q_n(e),\qquad\text{for each $n$.}
\end{equation}
As $(q_n):\ce(V)\to\lip_n\ce_n(V\cap X_0)$ is injective (Lemma \ref{14.09.2017--1}-(b)), we can say:
\begin{enumerate}\item  if $\nu\le\nu'$, then $\na_{\nu}(\pd)(e)=\na_{\nu'}(\pd)(e)$. 
\item If $\pd'$   is such that $\pd\pd'$ has order  $\le\nu$, then $\na_{\nu}(\pd\pd')(e)=\na_{\nu}(\pd)[\na_{\nu}(\pd')(e)]$.
\end{enumerate}
This means that $\ce(V)$ now carries the structure of a $\cd(X/S)(V)$-module.  Because     $V$    is affine, this endows $\ce|_V$ with a structure of $\cd(V/S)$-module. Furthermore, by definition, the Taylor isomorphism of level $\nu$ associated to this structure is simply the restriction of $\te(\nu)$ to $V$. Hence, covering $X$ by open subsets as these,  $\ce$ becomes a  $\cd$-module for which the Taylor isomorphism of order $\nu$ is simply $\te(\nu)$. By construction (see eq. \eqref{26.01.2018--2}) we then conclude that $\mm D\Ph(\ce)=(\ce_n,\al_n)$.

\emph{Fully faithfulness:}  As before, by GFGA, it is sufficient to show 

\begin{claim*} Let $\ce$ and $\cf$ be $\cd(X/S)$-modules, and let  $\phi:\ce\to\cf$ be an $\co_X$-linear arrow  such that $\phi_n:v_n^*\ce\to v_n^*\cf$ is $\cd(X_n/S_n)$-linear for all $n$. Then  $\phi$ is a morphism of $\cd(X/S)$-modules. 
\end{claim*}
Let $V$ be constructed from $\cf$ as in Lemma \ref{14.09.2017--1}(a); we show that  $\phi|_V$ is $\cd(V/S)$-linear. 
Let $e\in\ce(V)$, $\pd\in\cd(X/S)(V)$, and write $r_n$  for the natural $\co(V)$-linear morphisms  $\cf(V)\to v_n^* \cf(V\cap X_0)$. It follows easily that $r_n(\pd\po\phi(e))=r_n(\phi(\pd \po e))$, so that   injectivity of $(r_n):\cf(V)\to\lip v_n^*(\cf)(V\cap X_0)$
assures that $\phi|_V$ is $\cd(V/S)$-linear.  Since $X$ can be covered by open subsets like $V$ (Lemma \ref{14.09.2017--1}), we are done. 
\end{proof} 
 
\begin{lem}\label{14.09.2017--1}Let $\cm$ be a coherent $\co_X$-module. 

\begin{enumerate}\item[(a)] Denote by $\xi_1,\ldots,\xi_r$ the associated points of $\cm$ \cite[${\rm IV}_2$, 3.1]{EGA} lying on the generic fibre $X_K$. Then, every point $x\in X_k$ has an open affine coordinate neighbourhood $V$ such that if $\xi_j\in V$, then $\ov{\{\xi_j\}}\cap V_k\not=\varnothing$. 

\item[(b)] If $V$ is as before, then $\cm(V)$ is $\pi$-adically separated.

\item[(c)] The scheme $X$ can be covered by finitely many open subsets as $V$ above. 
\qed 
\end{enumerate}
\end{lem}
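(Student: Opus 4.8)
I would establish the three assertions separately, since they are of quite different natures: (a) is purely topological, (b) rests on the Krull intersection theorem, and (c) on the properness of $f$.

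For \emph{part (a)}, the plan is as follows. Fix $x\in X_k$. Among the finitely many associated points $\xi_1,\ldots,\xi_r$ I would discard those whose closure avoids $x$, forming the open set
\[
U\;=\;X\setminus\bigcup_{j\,:\,x\notin\ov{\{\xi_j\}}}\ov{\{\xi_j\}},
\]
which contains $x$. Since $f$ is smooth, $x$ admits an affine coordinate neighbourhood contained in any given open, so I would choose such a $V$ with $x\in V\subseteq U$. The key observation is then that if $\xi_j\in V$, the inclusion $V\subseteq U$ forbids $\xi_j$ from lying in a discarded closure, which forces $x\in\ov{\{\xi_j\}}$; as $x\in V\cap X_k=V_k$, this already gives $x\in\ov{\{\xi_j\}}\cap V_k$, so the latter is nonempty. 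No properness is needed here.

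For \emph{part (b)}, set $A=\co(V)$ (noetherian), $M=\cm(V)$ (finitely generated over $A$), and $N=\bigcap_n\pi^nM$. I would invoke the Krull intersection theorem to produce $b\in A$ with $(1-\pi b)N=0$, so that $\mm{Supp}(N)$ lies in the vanishing locus of $1-\pi b$. Since $1-\pi b$ reduces to $1$ along $V_k$, this locus misses $V_k$, whence $\mm{Supp}(N)\cap V_k=\varnothing$; in particular every associated point of $N$ sits on the generic fibre. But $\mm{Ass}(N)\subseteq\mm{Ass}(\cm|_V)=\mm{Ass}(\cm)\cap V$, and the associated points of $\cm$ on $X_K$ are exactly the $\xi_j$, so any associated point of $N$ is some $\xi_j\in V$. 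If $N$ were nonzero it would have such an associated point $\xi_j$, forcing $\ov{\{\xi_j\}}\subseteq\mm{Supp}(N)$; yet part (a) gives $\ov{\{\xi_j\}}\cap V_k\neq\varnothing$, contradicting $\mm{Supp}(N)\cap V_k=\varnothing$. Hence $N=0$, which is the $\pi$-adic separatedness of $\cm(V)$.

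For \emph{part (c)}, I would cover $X_k$ (closed in the noetherian $X$, hence quasi-compact) by finitely many opens $V$ as in (a) and let $W$ be their union. Then $W$ is open, $X_k\subseteq W$, so $X\setminus W$ is closed and contained in $X_K$. Properness of $f$ makes $f(X\setminus W)$ closed in $S$, and being contained in the image of $X_K$, i.e.\ in the non-closed generic point of $S$, it must be empty; thus $X\setminus W=\varnothing$ and $W=X$. The main obstacle, and the conceptual heart of the argument, is part (b): one must marry the commutative-algebra input (the Krull intersection theorem pinning the associated points of the ``infinitely $\pi$-divisible'' submodule $N$ to the generic fibre) with the point-set condition isolated in (a) (which forbids the closure of such a point from staying off $V_k$). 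Parts (a) and (c) are routine once this mechanism is understood.
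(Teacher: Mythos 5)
Your proof is correct, and in fact there is nothing in the paper to compare it against: the lemma is stated with its proof omitted (the proof of Proposition \ref{03.07.2017--1} says explicitly that it is ``simple and we leave it to the reader''), so your write-up is a legitimate filling-in of that gap, and the division of labour you chose --- topology for (a), Krull intersection plus associated points for (b), properness for (c) --- is exactly the intended mechanism. Two small points are worth tightening. First, in part (b) the inclusion $\ov{\{\xi_j\}}\subset\mm{Supp}(N)$ must be read with the closure taken \emph{inside $V$}, since $\mm{Supp}(N)$ is only a closed subset of $V=\mm{Spec}\,\co(V)$; this costs nothing, because $\ov{\{\xi_j\}}\cap V_k$ (closure in $X$) equals $\bigl(\ov{\{\xi_j\}}\cap V\bigr)\cap V_k$, which is the closure in $V$ intersected with $V_k$, so part (a) still hands you a point of $\mm{Supp}(N)\cap V_k$ and the contradiction stands --- indeed your proof of (a) produces the point $x$ itself in this intersection. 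Second, part (c) genuinely needs properness of $f$ (otherwise $X$ may contain a nonempty closed subset lying entirely in $X_K$, e.g.\ $X=\af^1_K$ viewed over $S$); properness is not among the standing hypotheses of Section \ref{20.09.2017--2} but is assumed in Proposition \ref{03.07.2017--1} and in Theorem \ref{prudence_galois}, the only places the lemma is used, so your invocation of it is the correct reading of the statement.
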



\subsection{Local systems and representations of the topological fundamental group}\label{17.03.2017--1}
Let $\La$ be a local $\CC$-algebra which, as a complex vector space, is finite dimensional. (In this case, the residue field of $\La$ is $\CC$.) Writing  $T$ for the  analytic space associated to $\La$ \cite[Expos\'e 9, 2.8]{sc}, we give ourselves   a smooth morphism $g:Y\to T$ of \emph{connected} complex analytic spaces \cite[Expos\'e 13, \S3]{sc}.

Let us now bring to stage the category $\bb{LS}(Y/T)$ of relative local systems \cite[I.2.22]{Del70}.
These are $g^{-1}(\co_T)$-modules $\cf$ on $Y$ enjoying the ensuing property: for every $y\in Y$, there exists an  open  $V\ni y$  such that the $g^{-1}(\co_T)$-module $\cf|V$ belongs to $g^{-1}\bb{Coh}(T)$. 
Under these conditions, any $\cf\in\bb{LS}(Y/T)$ is 
a \emph{locally constant} sheaf of $\La$-modules whose values on connected open sets are of finite type. Conversely, any such locally constant sheaf of $\La$-modules satisfies the defining property of $\bb{LS}(Y/T)$. Hence, the following arguments constitute a well-known exercise, see Exercise F of Ch. 1 and Exercise F of Ch. 6 in \cite{spanier}. (Here one should recall that $|Y|$ is the topological space of a {\it manifold}.) 

Let $y_0\in |Y|$ be a point and $\Ga$ the fundamental group of $|Y|$ based at it. Given a locally constant sheaf of $\La$-modules $\cf$ on $Y$, we can endow its stalk $\cf_{y_0}$ with a left action of $\Ga$. This construction then allows the definition of a functor 
\[
\mathds M_{y_0}: \left\{\begin{array}{c}\text{locally constant sheaves}\\ \text{ of  $\La$-modules whose }\\ \text{stalks are of finite type}\end{array}\right\}\aro {\rm Rep}_\La(\Ga). 
\]


 
\begin{prp}\label{17.03.2017--3}The functor $\mathds M_{y_0}$ defines a $\La$-linear $\ot$-equivalence between $\bb{LS}(Y/T)$ and $\rep{\La}{\Ga}$. 
\qed 
\end{prp}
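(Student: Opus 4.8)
The plan is to reduce the statement to the classical monodromy equivalence between locally constant sheaves of $\La$-modules and $\La[\Ga]$-modules, exploiting that $|Y|$ is the underlying space of a manifold. First I would record the topological input: since $Y$ is a connected analytic space smooth over $T$, its space $|Y|$ is a manifold, hence path-connected, locally path-connected and locally simply connected; consequently the universal covering $p:\wt Y\to|Y|$ exists and $\Ga=\pi_1(|Y|,y_0)$ acts on $\wt Y$ by deck transformations. I would also note that $|T|$ is a single reduced point with $\co_T=\La$, so that $g^{-1}\co_T$ is the constant sheaf $\un\La$ on $Y$. Combined with the discussion preceding the statement, this shows that the objects of $\bb{LS}(Y/T)$ are \emph{precisely} the locally constant sheaves of $\La$-modules whose stalks are of finite type over $\La$, so that the proposition is a refined form of the usual correspondence.

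Next I would make $\mathds M_{y_0}$ precise and check that it lands in $\rep{\La}{\Ga}$. Given such a sheaf $\ce$, the unique continuation of germs along paths (supplied by local constancy) attaches to each loop based at $y_0$ an automorphism of the stalk $\ce_{y_0}$; because the transition isomorphisms of a locally constant sheaf of $\La$-modules are $\La$-linear, this automorphism is $\La$-linear, and homotopy invariance of the continuation shows that the assignment factors through $\Ga$. This yields a representation of $\Ga$ on the finite-type $\La$-module $\ce_{y_0}$, i.e.\ an object of $\rep{\La}{\Ga}$, and the construction is manifestly functorial and $\La$-linear on Hom-groups.

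For the equivalence I would exhibit a quasi-inverse. To $M\in\rep{\La}{\Ga}$ I would associate the locally constant sheaf $\cl_M$ of sections of the bundle $\wt Y\tiu{\Ga}M\to|Y|$ (with $M$ given the discrete topology), equivalently the sheaf $V\mapsto\{\,\Ga\text{-equivariant locally constant maps }p^{-1}(V)\to M\,\}$. This is a locally constant sheaf of $\La$-modules with stalk $M$, hence an object of $\bb{LS}(Y/T)$ since $M$ is of finite type over $\La$. The core of the argument is then the production of natural isomorphisms $\mathds M_{y_0}(\cl_M)\cong M$ (matching the deck-transformation monodromy with the prescribed $\Ga$-action) and $\cl_{\mathds M_{y_0}(\ce)}\cong\ce$, which establish that $\mathds M_{y_0}$ and $\cl_{(-)}$ are mutually quasi-inverse; these are exactly the cited exercises of \cite{spanier}, and I would only sketch them.

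Finally I would verify $\ot$-compatibility. The tensor product in $\bb{LS}(Y/T)$ is $\otu{\La}$ over the constant sheaf $\un\La$, with unit $\un\La$; since the stalk functor commutes with $\otu{\La}$ and the monodromy on $\ce\otu{\La}\cf$ is the diagonal action on $\ce_{y_0}\otu{\La}\cf_{y_0}$, the canonical isomorphisms $(\ce\otu{\La}\cf)_{y_0}\cong\ce_{y_0}\otu{\La}\cf_{y_0}$ assemble into a $\ot$-structure on $\mathds M_{y_0}$ sending the unit to the trivial representation. An equivalence that is a $\ot$-functor is automatically a $\ot$-equivalence, which finishes the proof. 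I expect the main obstacle to be the heart of the classical correspondence, namely the construction of the quasi-inverse from $\wt Y$ and the two natural isomorphisms above; the $\La$-linearity and $\ot$-refinements are then routine book-keeping once one is careful that path-continuation respects the $\La$-module structure rather than merely the underlying $\CC$-vector space.
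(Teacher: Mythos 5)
Your proof is correct and follows exactly the route the paper intends: the paper states this proposition with no written proof, deferring to the classical monodromy correspondence (the cited Exercises F of Ch.~1 and Ch.~6 of \cite{spanier}) after having already identified $\bb{LS}(Y/T)$ with locally constant sheaves of $\La$-modules of finite type, using that $|Y|$ is a manifold. Your write-up—universal cover, quasi-inverse via $\wt Y\tiu{\Ga}M$, and stalkwise verification of $\La$-linearity and the $\ot$-structure—is precisely that standard argument carried out in detail.
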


\subsection{$\cd$-modules on the analytification of an algebraic $\CC$-scheme}
Let $\La$ be a local $\CC$-algebra which, as a complex vector space, is finite dimensional and write $T=\mathrm{Spec}\,\La$. 
We give ourselves a smooth morphism of \emph{algebraic  $\CC$-schemes}  $g:Y\to T$ and set out to explain briefly the equivalence of $\cd$-modules on $Y$ and on its analytification, see Proposition \ref{17.03.2017--4} below. (We observe that these constructions are routine and often left aside, see pp. 97-8 in \cite{Del70}, for example.)

Let 
\[
\xymatrix{Y^\an\ar[r]^\al\ar[d]_{g^\an}& Y\ar[d]^g\\ T^\an\ar[r]&T}
\] 
stand for the diagram of locally ringed spaces obtained from the analytification functor \cite[XII, 1.2]{SGA1}. 
Proceeding as in \ega{IV}{4}{16.4}, while suppressing reference to $\al^{-1}$, $T$ and $T^\an$, we arrive at a morphism of $\co_{Y}$-modules 
\[\{\al\}_\nu:\cp^\nu_Y\aro\cp^\nu_{Y^\an} 
\]
such that, if $d^\nu$ is as in \ega{IV}{4}{16.3.6, p.15}, then 
$\{\al\}_\nu(  d^\nu(f))=d^\nu(\al^\#(f))$.   
(The justification given on page 17 of \ega{IV}{4}{16.4.3} for the existence of $\{\al\}_\nu$ relies on the case of affine schemes, but  a proof based only on formal properties of normal invariants is possible.) We then have a morphism of $\co_{Y^\an}$-modules 
\begin{equation}\label{12.10.2018--1}
P^\nu(\al):\co_{Y^\an}\otu{\co_Y}\cp_{Y}^\nu \aro \cp_{Y^\an}^\nu 
\end{equation}
such that 
\begin{equation}\label{10.10.2018--1}
P^\nu(\al)(1\ot d^\nu(f))=d^\nu(\al^\#(f)).
\end{equation} 
If $V\subset Y$ is an open subset having etale coordinates $\{y_j\}$ over $T$, then $\{d^\nu( y^q)\,:\,\sum q_j\le \nu\}$ is a basis of $\cp^\nu_Y|_V$ \cite[Proposition 2.6]{BO}. Similarly, as $V^\an$ is locally isomorphic to an open set of an affine space over $T^\an$ \cite[Expos\'e 13, 3.1]{sc}, the set
$\{d^\nu( \al^\#(y)^q)\,:\,\sum q_j\le \nu\}$ is a basis of $\cp^\nu_{Y^\an}|_{V^\an}$;  we conclude with the help of equation \eqref{10.10.2018--1} that $P^\nu(\al)$ is  an isomorphism. In particular, 
$(\cp^\nu_Y)^\an\stackrel\sim\to\cp^\nu_{Y^\an}$ via $P^\nu(\al)$ so that  $(|Y^\an|,(\cp^\nu_Y)^\an)\simeq P_{Y^\an}^\nu$. Now we note that  $(|Y^\an|,(\cp_Y^\nu)^\an)$ is just $(P^{\nu}_Y)^\an$.

Taking the dual of arrow \eqref{12.10.2018--1} for varying $\nu$, we obtain an {\it isomorphism} of $\co_{Y^\an}$-modules  
$\cd(Y^\an)\to\co_{Y^\an}\ot_{\co_Y}\cd(Y)$
which allows us to construct an arrow of $\co_Y$-modules 
\[
\rho:\cd(Y)\aro\cd(Y^\an).
\]
Using $V$ and $\{y_j\}$ as above and equation \eqref{10.10.2018--1}, the  verification  that $\rho$ is a morphism of $\co_Y$-\emph{algebras} is straightforward.

If $\ce\in \modules{\cd_Y}$ and $\{\te(\nu)\}$ are its Taylor isomorphisms (see Section \ref{10.02.2017--2}), we arrive at a commutative diagram of $\cp^\nu_{Y^\an}$-modules 
\begin{equation}\label{16.10.2018--3}
\xymatrix{
\cp^\nu_{Y^\an}\otu{ \cp^\nu_Y} \left(\cp^\nu_Y\otu{\co_Y}\ce\right) \ar[rrr]^-{\id\ot\te(\nu)}_-{\sim}\ar@{=}[d]
&&& 
\cp^\nu_{Y^\an}\otu{\cp^\nu_Y}\left(\ce\otu{\co_Y}\cp^\nu_Y\right)\ar@{=}[d]
\\
\cp^\nu_{Y^\an}\otu{\co_{Y^\an}}\ce^\an\ar[rrr]_-{\sim} &&& \ce^\an\otu{\co_{Y^\an}}\cp^\nu_{Y^\an}
}
\end{equation}
allowing us to define an   
analytification functor: 
\[(-)^\an:\modules{\cd(Y/T)} \aro\modules{\cd(Y^\an/T^\an)}.
\]
An argument similar to the one used in proving Proposition \ref{03.07.2017--1} gives: 

\begin{prp}\label{17.03.2017--4} Suppose that $g$ is proper. Then the functor $(-)^\an:\modules{\cd(Y/T)}\to\modules{\cd(Y^\an/T^\an)}$
is an equivalence of $\La$-linear  $\ot$-categories.
\end{prp}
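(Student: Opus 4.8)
The plan is to follow the proof of Proposition~\ref{03.07.2017--1} line by line, replacing Grothendieck's existence theorem (GFGA) everywhere by Serre's GAGA \cite[XII, Theorem 4.4]{SGA1}. First I would reduce, by means of the equivalence $\modules{\cd(Y/T)}\simeq\bb{str}(Y/T)$ of \cite[2.11]{BO} and its analytic counterpart, to showing that
\[(-)^\an:\bb{str}(Y/T)\aro\bb{str}(Y^\an/T^\an)\]
is an equivalence; recall that an object here is a coherent module $\ce$ together with a compatible family of isomorphisms $\te(\nu):p_1^*\ce\to p_0^*\ce$ on $P_{Y/T}^\nu$, normalised along $\De$ and satisfying a cocycle condition. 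The two ingredients that make the transport work are: (i) analytification commutes with the formation of principal parts, so that $(P_{Y/T}^\nu)^\an=P_{Y^\an/T^\an}^\nu$ compatibly with $p_0,p_1,\De$ \cite[Expos\'e XII, \S1]{SGA1} and with the pullbacks $(p_i^*\ce)^\an=p_i^*(\ce^\an)$; and (ii) every $P_{Y/T}^\nu$ is proper over $\CC$. For (ii) one notes that $P_{Y/T}^\nu\to Y$ is finite (principal parts are coherent) while $Y\to\mathrm{Spec}\,\CC$ is proper, being the composite of the proper $g$ with the finite map $T=\mathrm{Spec}\,\La\to\mathrm{Spec}\,\CC$; hence GAGA is available on $Y$, on each $P_{Y/T}^\nu$, and on the auxiliary parts carrying the cocycle identity.

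For essential surjectivity, I would start from $(\cg,\{\si(\nu)\})\in\bb{str}(Y^\an/T^\an)$ and use GAGA on $Y$ to produce a unique coherent $\ce$ with $\ce^\an\simeq\cg$. Each $\si(\nu):p_1^*\cg\to p_0^*\cg$ is then a morphism of coherent sheaves on $(P_{Y/T}^\nu)^\an$; identifying $p_i^*\cg=(p_i^*\ce)^\an$ and invoking GAGA on $P_{Y/T}^\nu$, I obtain a unique $\te(\nu):p_1^*\ce\to p_0^*\ce$ with $\te(\nu)^\an=\si(\nu)$, which is an isomorphism because the functor is an equivalence. The compatibility with the transition maps $P_{Y/T}^\nu\to P_{Y/T}^{\nu+1}$, the normalisation $\De^*\te(\nu)=\id$, and the cocycle identity are equalities of morphisms of coherent sheaves on proper $\CC$-schemes that hold after analytification, hence already hold on $Y$ by faithfulness of GAGA; thus $(\ce,\{\te(\nu)\})$ analytifies to the given object.

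Full faithfulness is similar but shorter: for stratified $\ce,\cf$ and a morphism $\psi:\ce^\an\to\cf^\an$ of stratified modules, GAGA on $Y$ gives a unique $\co_Y$-linear $\phi:\ce\to\cf$ with $\phi^\an=\psi$, and its compatibility with the stratifications is the single identity $\te_\cf(\nu)\circ p_1^*\phi=p_0^*\phi\circ\te_\ce(\nu)$ on each $P_{Y/T}^\nu$, which again descends by faithfulness of GAGA. Finally, the $\La$-linear and $\ot$-structure are inherited from the coherent level, the GAGA equivalence being $\CC$-linear, monoidal, and compatible with the $g^{-1}\co_T$-actions and with tensor products of stratifications. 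In truth the whole argument is a formal transport of structure; the only genuinely substantive points are (i) and (ii) above, and since both are either elementary (the properness of the principal parts) or recorded in the literature (the behaviour of analytification on principal parts), there is no real obstacle — this is why the authors describe the proof as routine.
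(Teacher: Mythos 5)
Your proposal is correct and is essentially the paper's own (very terse) argument: the paper proves this proposition precisely by applying Serre's GAGA to the proper $\CC$-schemes $P^\nu_{Y/T}$ and transporting the stratification/Taylor data, ``by the same technique used in proving Proposition \ref{03.07.2017--1},'' which is exactly your plan, including the two substantive points you isolate (properness of $P^\nu_{Y/T}$ over $\CC$ via $g$ proper, $T\to\mathrm{Spec}\,\CC$ finite and $P^\nu_{Y/T}\to Y$ finite, and the compatibility of $(-)^\an$ with principal parts from SGA1, Exp.\ XII). The only cosmetic difference is organizational: you run the ``obvious'' transport-of-structure argument through $\bb{str}(Y/T)$ using fullness and faithfulness of GAGA, which works smoothly here since both sides are genuine sheaf categories, whereas the proof of Proposition \ref{03.07.2017--1} had to be more hands-on because its target was a limit category.
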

\begin{proof}[Sketch of proof.] We deal only with essential surjectivity. By  GAGA \cite[XII, Theorem 4.4]{SGA1}, we only need to show that if $\ce$ is a coherent $\co_{Y}$-module such that $\ce^\an$ carries a structure of $\cd(Y^\an)$-module, then $\ce$ carries a structure of $\cd(Y)$-module inducing the preceding one. So let $\nu\in\NN$ be fixed and write $\si(\nu):p^{\an*}_{1}\ce^\an\to p_0^{\an*}\ce^\an$ for the Taylor isomorphism of order $\nu$. 
Applying GAGA  to the proper $\CC$-scheme  $P_{Y}^\nu=(|Y|,\cp_Y^\nu)$ we see that  $\si(\nu)$ comes from an isomorphism $\te(\nu):p_1^*\ce\to p_0^*\ce$ of $\cp_{Y}^\nu$-modules. Let $V$ be an affine and open subscheme of $Y$ having etale coordinates $\{y_j\}$ as above, $e$ a section of $\ce$ over $V$,  and $\pd\in\cd(Y)(V)$ of order $\le\nu$. Define $\na_\nu(\pd)(e)=({\rm id}_\ce\ot\pd)[\te(\nu)(1\ot e)]$, which is a section of $\ce$. If $(-)^\an:\Ga(V,\ce)\to\Ga(V^\an,\ce^\an)$ denotes the canonical map, then equation \eqref{10.10.2018--1} and  diagram \eqref{16.10.2018--3} show that $\rho(\pd)(e^\an)=(\na_\nu(\pd)(e))^\an$. This allows us to define on $\ce$ the structure of a $\cd(Y)$-module having $\{\te(\nu)\}$ as Taylor isomorphisms; details are simple and similar to those in the proof of Proposition \ref{03.07.2017--1}.  \end{proof}

To complement this Proposition, we note the following. Let $y$ be a $T$-point  of $Y$ sending $|T|$ to $y_0$. If $y^\an$ is the associated  $T^\an$-point of $Y^\an$, then it sends $|T^\an|$ to $y_0$, and, for an object $\ce$ of $\modules{\cd(Y/T)}$, the natural morphism of stalks  $\ce_{y_0}\to(\ce^\an)_{y_0}$ defines an isomorphism $y^*\ce\to y^{\an*}\ce^\an$.
This produces a natural isomorphism  $y^*\stackrel{\sim}{\to} y^{\an*}\circ(-)^\an$, where we abuse notation and identify $\bb{Coh}(T)$ and $\bb{Coh}(T^\an)$.  

\subsection{$\cd$-modules and relative local systems}Notations now are those of Section \ref{17.03.2017--1}, so that we are concerned only with complex analytic spaces.

For each $\ce\in\modules{\cd(Y/T)}$, we consider the sheaf of $\La$-modules (``solutions'', ``horizontal sections'') 
\[
{\bf Sol}(\ce)(U):=\left\{s\in\ce(U)\,:\,\begin{array}{c} \text{ for  $V\subset U$ open, the section $s|_V$ is annihilated }\\\text{  by the augmentation ideal of $\cd(Y/T)(V)$}
\end{array}\right\}.
\]
Now, recalling that $\cd(Y/T)$-modules are exactly the same thing as integrable connections \cite[Theorem 2.15]{BO},  Deligne shows in  \cite[I.2.23]{Del70} that: 
\begin{thm}\label{17.03.2017--2}For each $\ce\in\modules{\cd(Y/T)}$, the sheaf of $\La$-modules ${\bf Sol}(\ce)$ is a relative local system and the morphism of $\co_Y$-modules \[\co_Y\ot_\La{\bf Sol}(\ce)\aro\ce\] is an isomorphism. In addition, the functor 
\[
{\bf Sol}:\modules{\cd(Y/T)}\aro \bb{LS}(Y/T) 
\]
is a $\ot$-equivalence of $\La$-linear   categories. 
\end{thm}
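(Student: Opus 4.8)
The plan is to establish the classical analytic Riemann--Hilbert correspondence in the relative form dictated by the base $T$. First I would replace the datum of a $\cd(Y/T)$-module by that of a relative integrable connection $(\ce,\na)$, with $\na\colon\ce\to\ce\ot_{\co_Y}\Om^1_{Y/T}$, using the equivalence of \cite[Theorem 2.15]{BO}; under this identification ${\bf Sol}(\ce)$ is the sheaf $\ker\na$ of horizontal sections. The whole theorem then reduces to a single local statement: that integrability forces $(\ce,\na)$ to be \emph{locally trivial}, i.e.\ locally isomorphic to $(\co_Y^{\oplus n},d)$ through a frame of horizontal sections.

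The core step is therefore a relative holomorphic Frobenius (or Poincar\'e) lemma. Working in a local chart in which $Y/T$ looks like a polydisc over $T$, I would solve the integrable first-order system $\na s=0$ to produce, near each point, a basis $s_1,\dots,s_n$ of $\ce$ with $\na s_i=0$; flatness of the connection makes this horizontal frame an $\co_Y$-basis, so the coherent module $\ce$ is in fact locally free of well-defined rank $n$. Granting this, two consequences follow at once and globalise because they are checked stalkwise: (a) ${\bf Sol}(\ce)$ is, in a neighbourhood of each point, the constant sheaf $\La^{\oplus n}$ on $|Y|$, hence a locally constant sheaf of finite-type $\La$-modules, that is, an object of $\bb{LS}(Y/T)$; and (b) the evaluation map $\co_Y\ot_\La{\bf Sol}(\ce)\to\ce$, $f\ot s\mapsto fs$, carries the frame $(1\ot s_i)$ to the basis $(s_i)$ and is therefore an isomorphism.

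To obtain the equivalence I would exhibit a quasi-inverse $\bb{LS}(Y/T)\to\modules{\cd(Y/T)}$ sending a relative local system $L$ to $\co_Y\ot_\La L$ equipped with the connection $d\ot\id_L$; this is manifestly integrable. The relative Poincar\'e lemma, together with the connectedness of $Y$, yields ${\bf Sol}(\co_Y\ot_\La L)\simeq L$, while part (b) above supplies the other composite, so the two functors are mutually quasi-inverse. Finally, the $\ot$-structure is respected because horizontal sections of $\ce\ot_{\co_Y}\cf$ are generated by products of horizontal sections of the factors; checking this on local horizontal frames produces a natural isomorphism ${\bf Sol}(\ce\ot\cf)\simeq{\bf Sol}(\ce)\ot_\La{\bf Sol}(\cf)$ compatible with the unit and associativity constraints.

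The main obstacle is that $T$ is a \emph{fat} point: since $\La$ is a finite-dimensional local $\CC$-algebra, its maximal ideal $\g m$ is nilpotent, and the classical Frobenius theorem applies only to the reduced situation. I would circumvent this by inducting on the length of $\La$ along the $\g m$-adic filtration. The bottom step $\La=\CC$ is the usual analytic integrability theorem; for the inductive step one lifts horizontal frames across a small extension $0\to\g m^j/\g m^{j+1}\to\La/\g m^{j+1}\to\La/\g m^j\to0$, the obstruction being a horizontal section of a connection over the reduced space, which is killed locally by the reduced Poincar\'e lemma. Equivalently, one may solve the holomorphic integrable system directly over the Artinian ring $\La$, treating the nilpotent coordinates as formal parameters; I expect this deformation-theoretic bookkeeping, rather than any conceptual difficulty, to be where the real work lies.
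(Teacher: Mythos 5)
A preliminary remark on the comparison: the paper does not prove this theorem at all — it is quoted directly from Deligne (the sentence preceding the statement reads ``Deligne shows in \cite[I.2.23]{Del70} that\dots''). So your proposal has to be judged as a reconstruction of Deligne's argument. Its skeleton is indeed the right one and close to his: pass from $\cd(Y/T)$-modules to integrable relative connections via \cite[Theorem 2.15]{BO}, integrate locally on a relative polydisc, handle the Artinian base by induction on the length of $\La$ with obstructions killed by the holomorphic Poincar\'e lemma, and exhibit $L\mapsto\co_Y\ot_\La L$ as a quasi-inverse.

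There is, however, a genuine error in your central local claim. You assert that integrability produces, near each point, an $\co_Y$-\emph{basis} $s_1,\dots,s_n$ of horizontal sections, so that $\ce$ is locally free and $\mathbf{Sol}(\ce)$ is locally the constant sheaf $\La^{\oplus n}$. This is false as soon as $\La$ is non-reduced, and the categories in play are designed to contain the counterexamples: take $\ce=\co_Y/\g m\co_Y$ (with $\g m$ the maximal ideal of $\La$) endowed with the relative de Rham differential. This is a coherent $\co_Y$-module with integrable relative connection, it is \emph{not} locally free over $\co_Y$, and $\mathbf{Sol}(\ce)$ is the constant sheaf with value the finite but non-free $\La$-module $\La/\g m$. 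More generally, $\co_Y\ot_\La V$ with connection $d\ot\id$ lies in $\modules{\cd(Y/T)}$ for every finite $\La$-module $V$; if your local-freeness claim held, the theorem itself would be contradicted, since it asserts $\co_Y\ot_\La\mathbf{Sol}(\ce)\simeq\ce$ while relative local systems take values in arbitrary finite-type $\La$-modules. The statement you should be proving locally is \emph{constancy}, not triviality: $\ce\simeq\co_Y\ot_\La V$ for some finite $\La$-module $V$. This is not cosmetic, because your inductive step ``lift horizontal frames across a small extension'' has nothing to lift in the example above; it must be recast as lifting horizontal \emph{sections} that generate (the obstruction to lifting a single horizontal section lives in the local relative de Rham $H^1$ of $\g m^j\ce$, a connection over the reduced fibre, and is killed by the classical Frobenius theorem plus the holomorphic Poincar\'e lemma), followed by a check — say via the exact sequence $0\to\g m^j\ce\to\ce\to\ce/\g m^j\ce\to0$, the five lemma, and flatness of $\co_Y$ over $\La$ (which holds because $g$ is smooth) — that the evaluation map $\co_Y\ot_\La V\to\ce$ is an isomorphism. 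With that correction your induction on the length of $\La$ does go through and recovers Deligne's theorem.
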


Putting together Proposition \ref{17.03.2017--3} and  Theorem \ref{17.03.2017--2}, we have:

\begin{cor}\label{12.04.2017--1}  The composite functor 
\[\modules{\cd(Y/T)}\arou{{\bf Sol}}  \bb{LS}(Y/T)  \arou{\mathds M_{y_0}} \rep{\La}{\Ga}
\]
defines a $\ot$-equivalence of $\La$-linear categories.  \qed
\end{cor}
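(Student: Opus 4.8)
The plan is to observe that the statement is a purely formal consequence of the two results that immediately precede it, since an equivalence of categories is produced by composing equivalences. Concretely, Theorem \ref{17.03.2017--2} asserts that ${\bf Sol}:\modules{\cd(Y/T)}\to\bb{LS}(Y/T)$ is a $\ot$-equivalence of $\La$-linear categories, while Proposition \ref{17.03.2017--3} asserts that $\mathds M_{y_0}:\bb{LS}(Y/T)\to\rep{\La}{\Ga}$ is likewise a $\ot$-equivalence of $\La$-linear categories. The corollary is then nothing more than the assertion that the composite $\mathds M_{y_0}\circ{\bf Sol}$ inherits all of these properties.

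First I would recall the elementary fact that equivalences of categories are stable under composition: if $F$ and $G$ admit quasi-inverses $F'$ and $G'$, then $F'\circ G'$ is a quasi-inverse of $G\circ F$, the required natural isomorphisms $G\circ F\circ F'\circ G'\simeq\id$ and $F'\circ G'\circ G\circ F\simeq\id$ being obtained by pasting the unit/counit isomorphisms of each factor. Applying this with $F={\bf Sol}$ and $G=\mathds M_{y_0}$ shows at once that $\mathds M_{y_0}\circ{\bf Sol}$ is an equivalence of the underlying categories.

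It then remains only to check that the additional structure survives the composition, and this is automatic. A composite of $\La$-linear functors is $\La$-linear, and a composite of $\ot$-functors is again a $\ot$-functor, its structural (coherence) isomorphisms being obtained simply by juxtaposing those of the two factors; the compatibility with the unit object is inherited in the same way. Consequently $\mathds M_{y_0}\circ{\bf Sol}$ is a $\La$-linear $\ot$-equivalence, which is exactly the assertion.

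I do not anticipate any genuine obstacle here, since all of the substance lives in the two cited results — Deligne's equivalence between integrable connections and relative local systems, and the monodromy description of locally constant sheaves on the manifold $|Y|$ — both of which I am permitted to assume. The only point deserving a moment of attention is the bookkeeping of the $\ot$-coherence data, but because each functor is supplied to us \emph{as} a $\ot$-functor, this verification is purely formal and carries no real content.
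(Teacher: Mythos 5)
Your proposal is correct and coincides with the paper's own reasoning: the paper introduces the corollary with the words ``Putting together Proposition \ref{17.03.2017--3} and Theorem \ref{17.03.2017--2}'' and marks it \qed, i.e.\ it too regards the statement as the purely formal fact that a composite of $\La$-linear $\ot$-equivalences is again one. Nothing further is needed.
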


To complement this corollary, we make the following observations. Let $y:T\to Y$ be a section to the structural morphism which takes $|T|$ to $y_0$, and let $\ce$ be an object of $\modules{\cd(Y/T)}$. On the level of stalks at $y_0$, Theorem \ref{17.03.2017--2} implies that the natural  morphism $\co_{y_0}\ot_\La\bb{Sol}(\ce)_{y_0}\to \ce_{y_0}$ is an isomorphism, so that the composite arrow $\bb{Sol}(\ce)_{y_0}\to \ce_{y_0}\to  y^*(\ce)$ is an isomorphism. 
Hence, if $\om:\rep\La\Ga\to\modules\La$ stands for the forgetful functor, it follows that $\om\circ\mathds M_{y_0}\circ\bb{Sol}\stackrel\sim\to y^*$ as $\ot$-functors.   

\subsection{The category $\mm{Rep}_R(\Ga)^\wedge$} 
In this section we shall assume that  $R$ is complete and denote by $t$ a uniformizer.  
Given an abstract group  $\Ga$, we introduce the category  $\mm{Rep}_R(\Ga)^\wedge$ following the pattern of Definition \ref{20.09.2017--1}.   Its objects are sequences $(E_n,\si_n)_{n\in\NN}$ where each $E_n$ is a left ${R_n}\Ga$-module whose underlying  $R_n$-module is finite, and 
\[
\si_n:R_{n}\ot_{R_{n+1}}E_{n+1}\aro E_n
\] 
is an isomorphism of $R_n\Ga$-modules. Term by term tensor product (of $R_n$-modules) defines on $\rep R\Ga^\wedge$ the structure of a $R$-linear $\ot$-category. 

The categories $\rep R\Ga$ and $\rep R\Ga^\wedge$ are related by two functors 
\[\g F:\rep R\Ga\aro \rep R\Ga^\wedge\]
and 
\[
\g L:\mm{Rep}_R(\Ga)^\wedge\aro \mm{Rep}_R(\Ga).
\]
On the level of modules these are defined by 
\[\g F(E)=(R_n\ot_R E,\mm{canonic})\quad\text{and}\quad\g L(E_n,\si_n)=\lip_nE_n.\]
(That the projective limit above is of finite type over $R$ is proved in \ega{0}{I}{7.2.9}.)  Needless to say, $\g L$ and $\g F$ are inverse $\ot$-equivalences   of $R$-linear categories, see 7.2.9-11 and 7.7.1 in \cite[$0_{\rm I}$]{EGA}. 

\begin{rmk}
The additive category ${\rm Rep}_R(\Ga)^\wedge$ is abelian and $\g F$ respects that structure, but kernels   \emph{are not} defined effortlessly because for an arrow $(\ph_n):(E_n)\to(F_n)$, the projective system of kernels $(\mathrm{Ker}\,\ph_n)$ is not necessarily an object of ${\rm Rep}_R(\Ga)^\wedge$. For example, one can consider the obvious object $(R_n)$ of $\rep R\Ga^\wedge$ and  the arrow $(\ph_n):(R_n)\to(R_n)$ of multiplication by $t$; then the kernel is certainly trivial, while  $\mm{Ker}(\ph_0)=R_0$. 
\end{rmk}

\subsection{The main result}\label{04.07.2017--4}We suppose here that  $R=\pos\CC t$ and return to the setting of Section \ref{20.09.2017--2}, where, among others, we considered a \emph{smooth} morphism of schemes
\[
f:X\aro S.
\]
We assume in addition that $f$ is \emph{proper} and that  $X_0$ is \emph{connected}. Note that, the latter condition implies that  each $|X_n^\an|$ is also connected \cite[XII Proposition 2.4]{SGA1}.

Following the model of Definition \ref{20.09.2017--1}, we use the $\NN$-indexed family of categories $\modules{\cd(X_n^\an/S_n^\an)}$
to produce 
\[
\modules{\cd(X^\an/S^\an)}^\wedge.
\] 
(Here  $X^\an$ and $S^\an$ are just graphical signs carrying no mathematical meaning.)
Since $X_n^\an$ is proper over  $S_n^\an$ \cite[XII, Proposition 3.2, p. 245]{SGA1}, 
Proposition \ref{17.03.2017--4} produces   $\ot$-equivalences of $R_n$-linear  categories 
\[
{(-)^\an}:\modules{\cd(X_n/S_n)}\arou\sim \modules{\cd(X_n^\an/S_n^\an)},
\]
which in turn give rise to a $\ot$-equivalence of $R$-linear categories 
\[
\g A:\modules{\cd(X/S)}^\wedge \arou\sim  \modules{\cd(X^\an/S^\an)}^\wedge.
\]

Let $x$ be an $R$-point of $X$ and write $x_n$ for the corresponding $R_n$-point of $X_n$. Clearly, each $x_n$ induces the same point, call it $x_0$, on the topological space  $|X_0^\an|=|X_n^\an|$. Let  $\Ga$ be the fundamental group of $|X_0^\an|$ based at $x_0$. From Corollary \ref{12.04.2017--1} we have  $\ot$-equivalences of $R_n$-linear categories: 
\[
\mathds M_{x_0}\circ{\bf Sol}:\modules{\cd(X_n^\an/S_n^\an)}\arou\sim \rep{R_n} \Ga,
\]
which in turn induce an equivalence 
\[
\g M_{x_0}:  \modules{\cd(X^\an/S^\an)}^\wedge\arou\sim      {\rm Rep}_R(\Ga)^\wedge. 
\]
We have therefore a diagram 
\begin{equation}\label{13.04.2017--1}
\xymatrix{
\modules{\cd(X/S)}\ar[r]^-{\mm D\Ph}\ar[r]& \modules{\cd(X/S)}^\wedge \ar[r]^-{\g A}&
\modules{\cd(X^\an/S^\an)}^\wedge\ar[r]^-{\g M_{x_0}}&
\\
\ar[r]^-{\g M_{x_0}} & \rep R\Ga^\wedge\ar[r]^{\g L}& \rep{R}\Ga,&}
\end{equation}
where each arrow is a $\ot$-equivalence of $R$-linear  categories. We can therefore say: 
\begin{thm}\label{Riemann-Hilbert} The $R$-linear $\ot$-categories $\modules{\cd(X/S)}$ and $\rep{R}{\Ga}$ are equivalent by means of the composition of \eqref{13.04.2017--1} above. 
\qed 
\end{thm}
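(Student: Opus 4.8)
The plan is to read the theorem as the single assertion that the composite displayed in \eqref{13.04.2017--1} is an $R$-linear $\ot$-equivalence, and to obtain this from the observation that each of the four arrows occurring in that diagram is already such an equivalence; since a composite of $\ot$-equivalences is again a $\ot$-equivalence, nothing further is needed. Two of the arrows have been dealt with outright: $\mm D\Ph$ is an equivalence by Proposition \ref{03.07.2017--1} (this is exactly where the properness of $f$ is used, through GFGA), while $\g L$ is an equivalence by \ega{0}{I}{7.2.9--11}. It therefore remains to argue that the two ``assembled'' functors $\g A$ and $\g M_{x_0}$ are equivalences as well.

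First I would isolate the following elementary principle governing the hat-categories of Definition \ref{20.09.2017--1}. Suppose $(\cC_n)$ and $(\cd_n)$ are two $\NN$-indexed families of $R_n$-linear $\ot$-categories with transition functors, and suppose we are given $R_n$-linear $\ot$-equivalences $F_n\colon\cC_n\to\cd_n$ together with natural isomorphisms making the squares formed with the respective transition functors commute. Then the induced functor between the associated hat-categories, sending a compatible sequence $(\ce_n,\al_n)$ to $(F_n\ce_n,\al_n')$ with $\al_n'$ the transported gluing datum, is an $R$-linear $\ot$-equivalence; a quasi-inverse is built term by term from quasi-inverses $G_n$ of the $F_n$, the required compatibility isomorphisms for the $G_n$ being deduced from those for the $F_n$, and the $R$-linear $\ot$-structure being respected automatically since it is defined term by term. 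With this lemma in hand, the claim for $\g A$ follows from the level-wise equivalences $(-)^\an\colon\modules{\cd(X_n/S_n)}\to\modules{\cd(X_n^\an/S_n^\an)}$ of Proposition \ref{17.03.2017--4}, once one checks that analytification commutes with the transition functors $u_n^\#$; this compatibility is a formal consequence of the functoriality of $(-)^\an$ along the closed immersions $u_n$. Similarly, the claim for $\g M_{x_0}$ follows from Corollary \ref{12.04.2017--1} applied at each level, once one knows that $\mathds M_{x_0}\circ\bb{Sol}$ is compatible with reduction along $R_{n+1}\to R_n$. Here the decisive point is that $|X_n^\an|$ and the base point $x_0$ do \emph{not} depend on $n$, so that one and the same fundamental group $\Ga$ occurs at every level, and that passing to solutions commutes with the base change $(-)\ot_{R_{n+1}}R_n$.

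The main obstacle I anticipate lies entirely in this gluing step: one must keep track of the coherence of the comparison isomorphisms attached to $(-)^\an$ and to $\mathds M_{x_0}\circ\bb{Sol}$, since a quasi-inverse defined naively term by term need not strictly commute with the transition functors, only up to the transported cells. Once these natural isomorphisms are recorded — and they are natural in the objects by the very construction of the functors involved — the verification that $\g A$ and $\g M_{x_0}$ are equivalences is routine. Combining this with the already-established equivalences $\mm D\Ph$ and $\g L$, and noting that every arrow in \eqref{13.04.2017--1} preserves the $R$-linear and tensor structures, we conclude that the composite is an $R$-linear $\ot$-equivalence, as asserted.
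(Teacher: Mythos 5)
Your proposal is correct and follows essentially the same route as the paper: the theorem there carries a \qed precisely because the preceding discussion of Section \ref{04.07.2017--4} already establishes that each arrow of \eqref{13.04.2017--1} is an $R$-linear $\ot$-equivalence ($\mm D\Ph$ by Proposition \ref{03.07.2017--1}, $\g A$ and $\g M_{x_0}$ by gluing the level-wise equivalences of Proposition \ref{17.03.2017--4} and Corollary \ref{12.04.2017--1}, and $\g L$ by EGA $0_{\rm I}$, 7.2.9--11), so the composite is one too. Your explicit term-by-term gluing lemma for the hat-categories, including the remark that the base point and hence $\Ga$ are the same at every level, merely spells out what the paper leaves as routine (``which in turn give rise to\ldots'').
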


Since we are also interested in the differential Galois groups associated to objects in $\modules{\cd(X/S)}$ (see  \cite[section 7]{DHdS15} and Section \ref{22.08.2020--1}), we complement Theorem \ref{Riemann-Hilbert} by explaining how the aforementioned equivalence relates fibre functors.  As we observed after Proposition \ref{17.03.2017--4} and Corollary \ref{12.04.2017--1}, $x_n^*\simeq x_n^{\an*}\circ(-)^\an$ and $x_n^{\an*}\simeq \om\mathds M_{x_0}\bb{Sol}$. This implies that $\om\g M_{x_0}\g A$ is $\ot$-isomorphic to $(\ce_n)\mapsto(x_n^*\ce_n)$ so under the above equivalence the forgetful functor $\rep R\Ga\to\modules R$ and  $x^*:\modules{\cd(X/S)}\to\modules R$ are $\ot$-isomorphic.

\begin{cor}\label{23.02.2017--6}If $\Ga$ has an infinite cyclic quotient, then the group scheme $\cn$ of Section \ref{23.02.2017--7} appears as   the full differential Galois group of some $(\ce,\na)\in\modules{\cd(X/S)}^\circ$.  
\end{cor}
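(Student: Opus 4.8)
The plan is to transport the problem to representations of $\Ga$ by means of Theorem \ref{Riemann-Hilbert}, and then to recognise $\cn$ as the image of a suitable homomorphism issued from the Tannakian envelope of the infinite cyclic quotient.

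First I would invoke Theorem \ref{Riemann-Hilbert}. Since $R=\pos\CC t$, the standing hypotheses of Section \ref{23.02.2017--7} hold with $k=\CC$ and $\pi=t$, and the theorem supplies a $\ot$-equivalence $\modules{\cd(X/S)}\simeq\rep R\Ga$ under which, as recorded just after its statement, the fibre functor $x^*$ corresponds to the forgetful functor $\om$. Hence the full differential Galois group of an object of $\modules{\cd(X/S)}$---namely $\mm{Aut}^\ot$ of $x^*$ restricted to the Tannakian subcategory generated by that object---is carried to the Tannakian group of the associated representation of $\Ga$. Denote the given infinite cyclic quotient by $\Ga\twoheadrightarrow C$, and let $\Pi$ and $\Pi_C$ be the Tannakian envelopes over $R$ of $\Ga$ and of $C$ respectively (so $\Pi_C$ is the group scheme called $\Pi$ in Section \ref{23.02.2017--7}). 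The induced homomorphism of envelopes $\Pi\to\Pi_C$, furnished by the remark following Proposition \ref{23.02.2017--4}, is faithfully flat by \cite[Proposition 3.2.1(ii)]{DH14}; thus $\rep RC$ sits inside $\rep R\Ga$ as a full Tannakian subcategory closed under subquotients, and Tannakian groups computed in $\rep RC$ coincide with those computed in $\rep R\Ga$. It therefore suffices to produce a single object of $\rep RC$, free over $R$, whose Tannakian group is $\cn$.

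Next I would fix the object. Pick a faithful representation $V$ of $G=\GG_{a,R}\ti\GG_{m,R}$ on a finite free $R$-module, so that $G\hookrightarrow\mathbf{GL}(V)$ is a closed immersion. Let $M\in\repp RC$ be the pullback of $V$ along the composite $C\arou\ph\cn(R)$ and $\cn\to G$; concretely the generator of $C$ acts through the matrix of the point $P=(1,e^t)$. Under the equivalence above, $M$ corresponds to an object $(\ce,\na)\in\modules{\cd(X/S)}^\circ$, and everything reduces to computing the Tannakian group of $\langle M\rangle^\ot\subset\rep RC$.

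Here lies the crux. The Tannakian group of $\langle M\rangle^\ot$ is the flat affine $R$-group scheme through which the homomorphism $\Pi_C\to\mathbf{GL}(V)$ defining $M$ factors as a faithfully flat morphism followed by a monomorphism. I would exhibit such a factorization directly: by Proposition \ref{13.09.2017--1} the arrow $u_\ph:\Pi_C\to\cn$ is faithfully flat, while the composite $\cn\to G\hookrightarrow\mathbf{GL}(V)$ is a monomorphism, the Neron blowup $\cn\to G$ being itself a monomorphism of group schemes. By the uniqueness of the (faithfully flat, monomorphism) factorization, the Tannakian group of $M$ is $\cn$, which proves the corollary. I expect the principal obstacle to be precisely this identification of the Tannakian group with the monomorphic image over $R$: over a field a monomorphism into $\mathbf{GL}(V)$ is a closed immersion and the group is the naive Zariski closure of the monodromy, which here would merely be $G$; over $R$, by contrast, the blow-up functions $t^{-n-1}\Ph_n$ persist as matrix coefficients of saturated subquotients of the tensor powers of $V$, so that the image is the group $\cn$---which is not of finite type---rather than $G$. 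Establishing this, equivalently that a faithful (i.e. monomorphic) finite free representation tensor-generates $\rep R\cn$ together with the uniqueness of the factorization, is the delicate $R$-linear Tannakian ingredient; it is the schematic density of $\ph$ on \emph{both} fibres, secured in Proposition \ref{13.09.2017--1}, that pins the faithfully flat factor down to $\cn$ in place of $G$.
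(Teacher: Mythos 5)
Your global strategy is the paper's: transfer by Theorem \ref{Riemann-Hilbert}, obtain a faithfully flat arrow from the envelope onto $\cn$ (via \cite[Proposition 3.2.1(ii)]{DH14} and Proposition \ref{13.09.2017--1}), and pull back a faithful finite free representation of $\GG_{a,R}\ti\GG_{m,R}$ along $\cn\to G$. The gap lies exactly at the step you yourself flag as the crux, and the substitute you offer for it is based on a false assertion: the Neron blowup $\cn\to G$ is \emph{not} a monomorphism of group schemes. It is injective on points with values in \emph{flat} $R$-algebras (every element of $R[\cn]$ has a $\pi$-power multiple lying in $R[G]$), but not on all $R$-algebras. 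Indeed, writing $\Ps_n=t^{-n-1}\Ph_n$, the algebra $R[\cn]$ is generated over $R[G]$ by the $\Ps_n$ subject only to the relations $\Ps_n=t\Ps_{n+1}+x^{n+1}/(n+1)!$ and $y=1+t\Ps_0$ (the quotient of the polynomial algebra by these relations is a direct limit of localizations of polynomial rings, hence a domain mapping isomorphically onto $R[\cn]\subset K[G]$). Now take $A=R\oplus(K/R)$ with square-zero multiplication and let $m_n\in K/R$ be the class of $t^{-n-1}$; then $x\mapsto 0$, $y^{\pm1}\mapsto 1$, $\Ps_n\mapsto m_n$ respects all relations (since $tm_{n+1}=m_n$ and $tm_0=0$) and so defines a point of $\cn(A)$, distinct from the identity, whose image in $G(A)$ is the identity. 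Hence the ``(faithfully flat, monomorphism) factorization'' of $\Pi_C\to\bb{GL}(V)$ that you invoke does not exist in the sense stated.

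More fundamentally, even after reinterpreting ``monomorphism'' inside $(\bb{FGSch}/R)$ (where $\cn\to\bb{GL}(V)$ does become a mono and uniqueness of such factorizations can be proved), your argument begs the question: what needs proof is that the Tannakian group of $\langle M\rangle_\ot$ \emph{admits} such a factorization, i.e.\ that every function on that group is a $t$-power division of a matrix coefficient coming from $\bb{GL}(V)$. Over $R$, unlike over a field, the Tannakian group of $\langle M\rangle_\ot$ is not a schematic image; it is the saturation term of the diptych of $\Pi\to\bb{GL}(V)$, and its identification with $\cn$ --- given that $u:\Pi\to\cn$ is faithfully flat and $\cn\to\bb{GL}(V)$ is a blowup followed by a closed immersion --- is precisely what \cite[Proposition 4.10]{DHdS15} delivers; this citation is the one nontrivial ingredient of the paper's proof. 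Your closing heuristic (the functions $t^{-n-1}\Ph_n$ ``persist as matrix coefficients of saturated subquotients,'' plus density on both fibres) is a restatement of the desired conclusion rather than an argument for it. In short: your proposal reproduces the paper's reduction but replaces its key cited input by a principle that is false as stated and, once repaired, is equivalent to that very input.
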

\begin{proof}Let $\Pi$ be the Tannakian  envelope of $\Ga$ (Definition \ref{22.02.2017--3}). Using \cite[3.2.1(ii)]{DH14} and then Proposition \ref{13.09.2017--1} we produce a faithfully flat morphism $u:\Pi\to\cn$. Let now $E$ be a free $R$-module of finite rank and  $\rho:\GG_a\ti\GG_m\to\bb{GL}(E)$ a faithful representation. We have a commutative diagram in $(\bb{FGSch}/R)$
\[
\xymatrix{
\Pi\ar[r]\ar[d]_{u}&\bb{GL}(E)\\
\cn\ar[r]&\GG_a\ti\GG_m\ar[u]_\rho;
}
\]
from \cite[Proposition 4.10]{DHdS15} we know that the full subcategory $\langle E\rangle_\ot$ of $\rep R\Pi$ is equivalent to $\rep R\cn$ under $u$. Now we translate these observations to the category $\modules{\cd(X/S)}$. 
\end{proof}

\section{Prudence of flat group schemes and projectivity of the underlying Hopf algebra}\label{07.07.2017--1}

In this section,   $R$ is assumed to be a  
\emph{complete} DVR. In what follows,   $G$ is a flat group scheme over $R$, which is \emph{not} assumed to be of  finite type.  
We shall investigate under which conditions $R[G]$  is a projective $R$-module. Our method is based on the following fundamental result. 

\begin{thm}[{cf. \cite[page 338]{kaplansky}}] \label{kaplansky}Let $M$ be a flat $R$-module whose associated $K$-module $M\ot K$ has countable rank. Then $M$ is free if and only if $\cap\pi^nM=(0)$. \qed
\end{thm}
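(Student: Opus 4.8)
The plan is to dispatch the easy implication quickly and to concentrate on the converse. \emph{Freeness implies separatedness} is immediate: if $M=\bigoplus_iR$, then any element of $\bigcap_n\pi^nM$ has finite support with each coordinate lying in $\bigcap_n\pi^nR$, and the latter is $(0)$ since $R$ is a DVR. For the converse, I would first record that flatness over a DVR is the same as torsion-freeness, so that $M$ embeds into $V:=M\otimes K$, a $K$-vector space of at most countable dimension. Fixing basis vectors $e_1,e_2,\dots$ of $V$, which I may take to lie in $M$, I set $V_n=Ke_1\oplus\cdots\oplus Ke_n$ and $M_n=M\cap V_n$. Since every vector of $V$ involves only finitely many $e_i$, the $M_n$ exhaust $M$, giving an increasing filtration $M=\bigcup_nM_n$ by submodules of finite rank, each of which inherits separatedness from $M$ because $\bigcap_m\pi^mM_n\subseteq\bigcap_m\pi^mM=(0)$.

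The heart of the argument --- and the step I expect to be the main obstacle --- is to prove that \emph{each $M_n$ is free of finite rank}. A $K$-linear dependence among any $n+1$ elements of $M_n\subseteq V_n$, cleared of denominators so that some coefficient is a unit, reduces modulo $\pi M_n$ to show that $\dim_k(M_n/\pi M_n)=:r\le n$. Lifting a $k$-basis to $x_1,\dots,x_r\in M_n$ produces a free submodule $F=Rx_1+\cdots+Rx_r$ (the $x_i$ are $K$-independent, for otherwise a relation would reduce to a dependence modulo $\pi M_n$) satisfying $M_n=F+\pi M_n$. The delicate point is to upgrade this to $M_n=F$, i.e.\ to kill the $\pi$-divisible quotient $M_n/F$; since quotients of separated modules need not be separated, one cannot argue naively. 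Here I would exploit completeness of $R$: given $y\in M_n$, iterating $M_n=F+\pi M_n$ lets me write $y=\sum_{j=0}^m\pi^jf_j+\pi^{m+1}z_{m+1}$ with $f_j\in F$ and $z_{m+1}\in M_n$. Because $F\cong R^r$ is $\pi$-adically complete, the series $\sum_j\pi^jf_j$ converges to some $f\in F$, and then $y-f\in\bigcap_m\pi^mM_n=(0)$, whence $y=f\in F$. Thus $M_n=F$ is free. This is exactly where passing to the finite-rank pieces is essential: a free module of \emph{infinite} rank is not $\pi$-adically complete, so the convergence argument would break down on $M$ itself.

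It then remains to assemble a global basis. For each $n$, the quotient $M_{n+1}/M_n$ embeds into $V_{n+1}/V_n\cong K$ (the kernel of $M_{n+1}\to V_{n+1}/V_n$ is $M\cap V_n=M_n$), hence is torsion-free; being finitely generated as a quotient of the finite free module $M_{n+1}$, it is free. Therefore each inclusion $M_n\hookrightarrow M_{n+1}$ splits, and a basis of $M_n$ extends to a basis of $M_{n+1}$. Taking the union of these nested bases yields a subset of $M$ that generates $M=\bigcup_nM_n$ (any element lies in some $M_n$) and is linearly independent (any finite relation already lives in some $M_n$, where the chosen basis is independent), so $M$ is free. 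The case of finite overall rank is subsumed, as then $M=M_N$ for large $N$ and one concludes directly from the finite-rank step.

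In summary, the two mechanisms doing the real work are the separatedness hypothesis and the completeness of $R$, combined via the finite-rank filtration; the countability of $\dim_K V$ enters only to make the filtration a genuine (countable) exhaustion along which a basis can be built inductively.
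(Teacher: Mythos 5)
Your proof is correct. There is, however, nothing in the paper to compare it against: the statement is quoted as a known result, with the \qed attached to the statement itself and a pointer to \cite[page 338]{kaplansky}, so the paper supplies no argument of its own. What you have produced is essentially a reconstruction of Kaplansky's classical proof, and all the delicate points are handled properly: separatedness passes to the submodules $M_n=M\cap V_n$ (it passes to arbitrary submodules, though not to quotients, as you rightly caution); the bound $\dim_k(M_n/\pi M_n)\le n$ via clearing denominators is sound; and the step from $M_n=F+\pi M_n$ to $M_n=F$ is exactly where the two hypotheses (completeness of $R$ and $\bigcap_m\pi^m M_n=0$) are consumed, via the convergent series in the finite free module $F$. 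The assembly step also works because $M_n$ is saturated in $M_{n+1}$ (being $M_{n+1}\cap V_n$), which is what makes $M_{n+1}/M_n$ torsion-free, hence free, hence the inclusions split and nested bases exist. Your closing diagnosis is also accurate and worth keeping: completeness cannot be applied to $M$ itself since infinite-rank free modules are not $\pi$-adically complete, and countability of the rank enters only to provide an exhaustion indexed by $\NN$ along which bases can be extended one step at a time --- this is precisely why Kaplansky's theorem has the countability hypothesis, and why the paper invokes it (e.g.\ in Section \ref{07.07.2017--1}) only after reducing to group schemes whose generic fibre is of finite type.
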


Therefore, all efforts should be concentrated on understanding the \emph{divisible ideal}, which we now introduce. 

\begin{dfn}\label{divisible_ideal}Let $M$ be an $R$-module. We denote by $\g d(M)$ the submodule  $\cap \pi^nM$, and call it the divisible submodule of $M$. 
If $A$ is an $R$-algebra, $\g d(A)$ is an ideal, which we call the divisible ideal of $A$. 
If $X=\mathrm{Spec}\,A$, we write alternatively $\g d(X)$ to mean $\g d(A)$ and let $\mm{Haus}(X)$ stand for the closed subscheme of $X$ defined by $\g d(X)$. 
\end{dfn}

It is quite easy to see that  $\g d(G)$ cuts out a closed subgroup scheme (see Proposition \ref{15.09.2017--2}). This being so,  in order to study how far $\g d(G)$ is from being $(0)$, the natural idea is, following Chevalley, to search semi-invariants. This is the crucial idea behind our notion of \emph{prudence}, see Definition \ref{17.10.2016--4}. 

Section \ref{28.09.2017--1} explains what we understand by semi-invariants in the context of group schemes over $R$. In it we also derive basic results on this concept which prove useful further ahead. In Section \ref{28.09.2017--2}   we introduce the notion of a prudent group scheme; this property is compared to the vanishing of the divisible ideal in Section \ref{15.09.2017--1}. The function of Section \ref{28.09.2017--3} is to relate prudence to other existing properties (appearing in \cite{SGA3}, \cite{raynaud} and \cite{DH14}).
Finally, Section \ref{28.09.2017--4}  links prudence of differential Galois groups to Grothendieck's existence theorem of projective geometry.

\subsection{Generalities on semi-invariants}\label{28.09.2017--1}
Let  $V$ be an object of  $\rep{R}{G}$; comultiplication $V\to V\ot_RR[G]$ is denoted by $\rho$. 

\begin{dfn}\label{semi-invariant}
We say that $\bb v\in V$ is a semi-invariant if $R \bb  v$ is a subrepresentation \cite[Part I, 2.9]{jantzen}. The set of all semi-invariants in $V$ is denoted by ${\rm SI}_G(V)$.  
\end{dfn}
We need the following basic results on semi-invariants. 

\begin{lem}\label{17.10.2016--1}The element $\bb v\in V$ is a semi-invariant if and only if $\rho(\bb v)=\bb v\ot \te$ for some $\te\in R[G]$. 
\end{lem}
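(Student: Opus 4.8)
The plan is to unwind Definition \ref{semi-invariant} into a concrete condition on $\rho(\bb v)$, using that $R[G]$ is flat over $R$. Recall that $R\bb v$ being a subrepresentation means precisely $\rho(R\bb v)\subseteq R\bb v\ot_RR[G]$, where, since $R[G]$ is $R$-flat, the natural map $R\bb v\ot_RR[G]\to V\ot_RR[G]$ is injective and identifies the source with a submodule of the target. The whole statement then reduces to computing this submodule explicitly.

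For the implication $(\Leftarrow)$ I would argue directly: assuming $\rho(\bb v)=\bb v\ot\te$ for some $\te\in R[G]$, the $R$-linearity of $\rho$ gives $\rho(r\bb v)=(r\bb v)\ot\te\in R\bb v\ot_RR[G]$ for every $r\in R$, so $R\bb v$ is stable under $\rho$ and is therefore a subrepresentation; hence $\bb v$ is a semi-invariant. For $(\RA)$, suppose $\bb v$ is a semi-invariant, so that $\rho(\bb v)\in R\bb v\ot_RR[G]$. The key observation is that, since $\bb v$ generates the cyclic module $R\bb v$ over $R$, the $R$-linear map $R[G]\to R\bb v\ot_RR[G]$ sending $\te\mapsto\bb v\ot\te$ is surjective: any elementary tensor $(r\bb v)\ot\ps$ equals $\bb v\ot(r\ps)$, and such tensors span. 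Consequently every element of $R\bb v\ot_RR[G]$ is of the form $\bb v\ot\te$ for a suitable $\te\in R[G]$, and applying this to $\rho(\bb v)$ produces the desired expression.

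The only point requiring care, and hence the main (mild) obstacle, is that $R\bb v$ need not be a free $R$-module, since the annihilator of $\bb v$ may be nonzero; thus $\te$ is not uniquely determined. This does not affect the argument, because uniqueness is not asserted and the surjectivity above yields a valid $\te$ regardless of torsion. The flatness of $R[G]$ is invoked solely to guarantee that $R\bb v\ot_RR[G]$ genuinely embeds in $V\ot_RR[G]$, so that the condition $\rho(\bb v)\in R\bb v\ot_RR[G]$ appearing in the definition of a subrepresentation is meaningful. With these remarks in place the equivalence is immediate.
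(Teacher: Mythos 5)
Your proof is correct and follows essentially the same route as the paper's: both directions rest on the observation that every element of $R\bb v\ot_RR[G]$ can be written as $\bb v\ot\te$ (since $(r\bb v)\ot\ps=\bb v\ot(r\ps)$ and such tensors span), together with Jantzen's characterization of a subrepresentation as stability of $\rho$ on $R\bb v\ot R[G]$. Your additional remarks on flatness of $R[G]$ and possible torsion in $R\bb v$ are sound clarifications of points the paper leaves implicit, not deviations in method.
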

\begin{proof}If $R\bb v$ is a subrepresentation, then $\rho(\bb v)$ belongs to the $R$-submodule $R\bb v\ot_RR[G]$ of $V\ot R[G]$, and hence $\rho(\bb v)=\bb v\ot \te$ for some $\te\in R[G]$. On the other hand, if $\rho(\bb v)=\bb v\ot\te$ for some $\te\in R[G]$, then $\rho|_{R\bb v}$ factors through $R\bb v\ot R[G]$, which amounts to being a  subrepresentation \cite[Part I, 2.9]{jantzen}. 
\end{proof}

\begin{rmks}We refrain from calling $\te$ a group-like element because we \emph{do not} assume that $R\bb v$ is isomorphic to $R$. 
Note that [SGA 3, Expos\'e ${\rm VI}_B$] only uses the notion of semi-invariant   in the case of a base field, see Definition 11.15 on p. 420 in there.

The set of semi-invariants is not usually stable under addition, so that it cannot be an $R$-submodule. On the other hand, it is stable under scalar multiplication (apply Lemma \ref{17.10.2016--1}).
\end{rmks}

\begin{lem}\label{17.10.2016--2}The following claims are true. 
\begin{enumerate} 
\item Let $\ph:V\to W$ be a morphism of representations of $G$.  The image of $\mm{SI}_G(V)$ lies in $\mm{SI}_G(W)$.
\item Let $H$ be another flat affine group scheme over $R$ and $f:G\to H$ a morphism. Then, for any $V\in\rep{R}{H}$, we have $\mm{SI}_H(V)\subset\mm{SI}_G(V)$.  
\item Under the notations of the previous item, assume that $f^\#:R[H]\to R[G]$ is injective and that $V$ belongs to $\repp RH$. Then   $\mm{SI}_H(V)=\mm{SI}_G(V)$.
\end{enumerate}\end{lem}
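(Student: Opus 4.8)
The plan is to reduce all three assertions to the coordinate-free criterion of Lemma \ref{17.10.2016--1}: an element $\bb v$ is a semi-invariant exactly when its image under the appropriate coaction has the shape $\bb v\ot\te$. I will write $\rho_V\colon V\to V\ot_R R[G]$ for the $G$-coaction on a representation $V$, and, when $V$ additionally carries an $H$-action, $\rho_H\colon V\to V\ot_R R[H]$ for the $H$-coaction. The link between the two, coming from the very definition of $f^\#$, is $\rho_G=(\id\ot f^*)\circ\rho_H$, and this single identity drives parts (2) and (3).

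For (1) I would use that $\ph$ being $G$-equivariant means $\rho_W\circ\ph=(\ph\ot\id)\circ\rho_V$. Taking $\bb v\in\mm{SI}_G(V)$ and writing $\rho_V(\bb v)=\bb v\ot\te$ by Lemma \ref{17.10.2016--1}, equivariance gives $\rho_W(\ph(\bb v))=(\ph\ot\id)(\bb v\ot\te)=\ph(\bb v)\ot\te$, whence $\ph(\bb v)\in\mm{SI}_G(W)$ by the same Lemma. Part (2) is equally immediate: for $\bb v\in\mm{SI}_H(V)$ with $\rho_H(\bb v)=\bb v\ot\te$, applying $\id\ot f^*$ yields $\rho_G(\bb v)=\bb v\ot f^*(\te)$, and since $f^*(\te)\in R[G]$ this exhibits $\bb v$ as a $G$-semi-invariant.

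For (3), part (2) already provides $\mm{SI}_H(V)\subset\mm{SI}_G(V)$, so only the reverse inclusion needs work, and this is where the hypotheses (injectivity of $f^*$, freeness of $V$) enter. I would take a nonzero $\bb v\in\mm{SI}_G(V)$ and first \emph{normalize}: since $V$ is free, hence torsion-free, and the coactions are $R$-linear, factoring out the largest power of $\pi$ dividing $\bb v$ changes neither $G$- nor $H$-semi-invariance (one divides the relation $\rho(\bb v)=\bb v\ot\te'$ by $\pi^m$ in the torsion-free module $V\ot R[\cdot]$), so it suffices to treat the case $\bb v\notin\pi V$. Fixing a basis $e_1,\dots,e_n$ of $V$ and writing $\bb v=\sum_i v_ie_i$, the fact that $R$ is a DVR forces some coefficient $v_j$ to be a unit. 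Writing $\rho_H(\bb v)=\sum_i e_i\ot a_i$ with $a_i\in R[H]$, the freeness identification $V\ot R[G]\cong R[G]^n$ lets me compare $(\id\ot f^*)\rho_H(\bb v)=\rho_G(\bb v)=\bb v\ot\te'$ coefficient by coefficient, giving $f^*(a_i)=v_i\te'$ for all $i$.

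Finally I would set $\te:=v_j^{-1}a_j\in R[H]$, so that $f^*(\te)=v_j^{-1}f^*(a_j)=\te'$; then $f^*(a_i)=v_i\te'=f^*(v_i\te)$ for every $i$, and injectivity of $f^*$ yields $a_i=v_i\te$. Hence $\rho_H(\bb v)=\sum_i e_i\ot v_i\te=\bb v\ot\te$, so $\bb v\in\mm{SI}_H(V)$ by Lemma \ref{17.10.2016--1}. The one genuine obstacle is precisely the construction of this $\te\in R[H]$: one cannot directly pull $\te'$ back along $f^*$ because a priori $\te'$ need not lie in the image of $f^*$. The normalization to a $\pi$-primitive $\bb v$, which guarantees a unit coefficient $v_j$, is exactly what makes the pullback available, and is the crux of the argument.
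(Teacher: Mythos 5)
Your proof is correct and follows essentially the same route as the paper's: parts (1) and (2) are the routine applications of Lemma \ref{17.10.2016--1} that the paper omits, and for (3) both arguments first divide out the largest power of $\pi$ (legitimate because $V\ot R[G]$ is torsion-free) and then compare coaction coefficients in a basis, using injectivity of $f^*$ to descend the semi-invariance relation to $R[H]$. The only cosmetic difference is that the paper completes the primitive vector to a basis and reads off $\si_{i1}=0$ for $i>1$, whereas you keep an arbitrary basis and use a unit coordinate to construct $\te=v_j^{-1}a_j$ directly; the mechanism is identical.
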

\begin{proof}
The verification of (1) and (2) follows the obvious method and we omit it. 

(3) Let $\bb v\in V$ be $G$-semi-invariant. We assume, to begin with, that $\bb v$ is the first element of an ordered basis $\{\bb v_1,\ldots,\bb v_r\}$ of $V$. Write $\si:V\to V\ot R[H]$ for the coaction and let $\si(\bb v_j)=\sum_i\bb v_i\ot \si_{ij}$ with $\si_{ij}\in R[H]$. Our hypothesis jointly with Lemma \ref{17.10.2016--1} then implies that 
\[\sum_i\bb v_i\ot f^\#(\si_{i1}) =\bb  v_1\ot \te\]
for some $\te\in R[G]$. This shows that $f^\#(\si_{i1})=0$ if $i>1$ and, consequently, that $\si_{i1}=0$ if $i>1$. This implies that $\bb v$ is $H$-semi-invariant (again by Lemma \ref{17.10.2016--1}).

Now, for a general $\bb v\in V\setminus\{0\}$, there exists an ordered basis $\{\bb v_1,\ldots,\bb v_r\}$ and some non-negative integer $m$ such that $\pi^m\bb v_1=\bb v$. We then see that $\bb v_1$ is also $G$-semi-invariant: There exists $\te\in R[G]$ such that $(\id_V\ot f^\#)\circ\si(\pi^m\bb v_1)=\pi^m\bb v_1\ot\te$, and hence $(\id_V\ot f^\#)\circ\si(\bb v_1)=\bb v_1\ot\te$ as $\pi^m$ is not a zero-divisor in $V\ot R[G]$;  Lemma \ref{17.10.2016--1} proves that $\bb v_1$ is semi-invariant for $G$. From the previous considerations, $\bb v_1$ is  $H$-semi-invariant, so that $\bb v=\pi^m\bb v_1$ is also $H$-semi-invariant.\end{proof}

\subsection{The concept of prudence}\label{28.09.2017--2}

We are now able to isolate the property of $\rep{R}{G}$ which forces the ideal $\g d(G)$ to vanish.

\begin{dfn}\label{17.10.2016--4}
Let $V\in{\rm Rep}_R^{\circ}(G)$. We say that $V$ is prudent if  an element $\bb v$ whose reductions modulo $\pi^{n+1}V$ are semi-invariant for all $n$ must necessarily be semi-invariant. We say that $G$ is prudent if each $V\in{\rm Rep}_R^{\circ}(G)$ is prudent.  
\end{dfn}

The definition of prudence admits the following reformulation (the proof is similar to the proof of Lemma \ref{17.10.2016--2}(3), and we omit it). 

\begin{lem}\label{17.01.2018--1}Let $V\in\repp{R}{G}$. Assume that each $\bb v\in V$ which \begin{itemize}\item  belongs to a basis, and
\item has a  reduction modulo $\pi^{n+1}V$ which is semi-invariant for all $n$, 
\end{itemize}
is a semi-invariant.  
Then $V$ is prudent. \qed
\end{lem}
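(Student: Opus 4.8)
The plan is to deduce prudence, which is an assertion about every vector of $V$, from the hypothesis, which constrains only the vectors belonging to a basis. The bridge is the elementary structure of finitely generated free modules over the DVR $R$. So I would take $\bb v\in V$ all of whose reductions modulo $\pi^{n+1}V$ are semi-invariant and aim to show $\bb v$ itself is semi-invariant. After discarding the trivial case $\bb v=0$, I would write $\bb v=\pi^m\bb v_1$, where $m$ is the largest integer with $\bb v\in\pi^mV$ (finite, because $V$ is $\pi$-adically separated) and $\bb v_1\in V\setminus\pi V$. By Nakayama's lemma such a primitive $\bb v_1$ extends to a basis $\{\bb v_1,\dots,\bb v_r\}$ of $V$. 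Since semi-invariance is stable under scalar multiplication (an immediate consequence of Lemma \ref{17.10.2016--1}), it then suffices to prove that $\bb v_1$ is semi-invariant; and for that, by the hypothesis of the lemma, it is enough to check that every reduction of $\bb v_1$ modulo $\pi^{n+1}V$ is semi-invariant.

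The core step is to transfer semi-invariance of the reductions of $\bb v=\pi^m\bb v_1$ to those of $\bb v_1$. I would expand the coaction as $\rho(\bb v_1)=\sum_{i}\bb v_i\ot a_i$ with $a_i\in R[G]$, so that $\rho(\bb v)=\sum_i\bb v_i\ot\pi^m a_i$. Reducing modulo $\pi^{n+1}$, the $\bb v_i$ still form a basis of $V_n:=V/\pi^{n+1}V$ over $R_n$, so that $V_n\ot_{R_n}R_n[G]$ decomposes as $\bigoplus_i\bb v_i\ot R_n[G]$; comparing components of the coaction and invoking the criterion of Lemma \ref{17.10.2016--1}, the semi-invariance of the image of $\bb v$ in $V_n$ becomes the system of conditions $\pi^m a_i\in\pi^{n+1}R[G]$ for all $i\neq1$. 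Since $R[G]$ is flat, hence torsion-free, over $R$, I may cancel $\pi^m$ to obtain $a_i\in\pi^{\,n+1-m}R[G]$ for $i\neq1$, valid for every $n\ge m$.

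To conclude, I would read this off at an arbitrary level: given $N\in\NN$, taking $n=N+m$ yields $a_i\in\pi^{N+1}R[G]$ for all $i\neq1$, which is precisely the assertion (again via Lemma \ref{17.10.2016--1}) that the reduction of $\bb v_1$ modulo $\pi^{N+1}V$ is semi-invariant. As $\bb v_1$ belongs to a basis, the hypothesis now gives that $\bb v_1$ is semi-invariant, whence $\bb v=\pi^m\bb v_1$ is semi-invariant. Since $\bb v$ was arbitrary, $V$ is prudent.

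The only delicate point is the cancellation of $\pi^m$. Over $R$ it is harmless, but it cannot be performed inside the rings $R_n$, where $\pi^m$ is a zero-divisor; the argument must therefore be organised so that the divisibilities of the $a_i$ are extracted in $R[G]$, where torsion-freeness is available, and only then reinterpreted modulo $\pi^{N+1}$ at a freely chosen level $N$. Everything else is routine manipulation of the coaction already codified in Lemma \ref{17.10.2016--1}, together with the standard fact that a primitive vector in a finite free $R$-module is part of a basis.
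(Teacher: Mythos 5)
Your proof is correct: the decomposition $\bb v=\pi^m\bb v_1$ with $\bb v_1$ primitive (hence part of a basis), the componentwise reading of the coaction via Lemma \ref{17.10.2016--1}, and the cancellation of $\pi^m$ carried out in the torsion-free module $V\ot R[G]$ rather than modulo $\pi^{n+1}$ are all sound, and the last point is indeed the only delicate one. The paper omits its own proof of this lemma as ``quite simple,'' so there is nothing to compare against, but your argument is plainly the intended one: the same device---writing $\bb v=\pi^m\bb v_1$ with $\bb v_1$ in a basis and cancelling $\pi^m$ using that $\pi^m$ is not a zero-divisor in $V\ot R[G]$---appears almost verbatim in the paper's proof of Lemma \ref{17.10.2016--2}-(3) and in the example on Tannakian envelopes.
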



The following result shall be employed in deriving deeper consequences in Section \ref{15.09.2017--1}.  

\begin{lem}\label{17.10.2016--5}Let $f:G\to H$ be a  morphism in $(\bb{FGSch}/R)$ such that the associated morphism of Hopf algebras $f^\#:R[H]\to R[G]$ is injective.  Then, if $G$ is prudent, so is $H$. 
\end{lem}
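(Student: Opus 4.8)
The plan is to reduce the prudence of $H$ to that of $G$ by transporting semi-invariants along $f$, exploiting both facets of Lemma \ref{17.10.2016--2}. Throughout, I would fix an arbitrary $V\in\repp{R}{H}$ and an element $\bb v\in V$ whose reduction modulo $\pi^{n+1}V$ is $H$-semi-invariant for every $n$; the goal is to show that $\bb v$ itself is $H$-semi-invariant, for this is exactly what prudence of $H$ demands (Definition \ref{17.10.2016--4}). Observe first that the $R$-module underlying $V$ is unaffected when we regard $V$ as a representation of $G$ via $f$, so $V\in\repp{R}{G}$ as well, and $G$-prudence is applicable to it.

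The decisive elementary point I would isolate is that pulling back a representation along $f$ turns its $H$-coaction $\rho_H\colon V\to V\ot_R R[H]$ into the $G$-coaction $\rho_G=(\id_V\ot f^*)\circ\rho_H$. Consequently, if $\rho_H(\bb w)=\bb w\ot\te$ with $\te\in R[H]$, then $\rho_G(\bb w)=\bb w\ot f^*(\te)$; by Lemma \ref{17.10.2016--1}, every $H$-semi-invariant is therefore $G$-semi-invariant. The same computation holds verbatim after reducing modulo $\pi^{n+1}$, since $\rho_H$ and $f^*$ are compatible with the base change $R\to R_n$. This is precisely the content of Lemma \ref{17.10.2016--2}(2), now used after reduction modulo $\pi^{n+1}$.

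With this in hand the argument is short. For each $n$, the reduction $\bb v\bmod\pi^{n+1}V$ is $H$-semi-invariant in $V\ot R_n$ by hypothesis, hence $G$-semi-invariant by the previous paragraph. Thus every reduction of $\bb v$ modulo $\pi^{n+1}V$ is $G$-semi-invariant. Since $G$ is prudent and $V\in\repp{R}{G}$, Definition \ref{17.10.2016--4} yields that $\bb v$ is $G$-semi-invariant, i.e.\ $\bb v\in\mm{SI}_G(V)$. Finally I would invoke the standing hypothesis that $f^*\colon R[H]\to R[G]$ is injective, together with $V\in\repp{R}{H}$: Lemma \ref{17.10.2016--2}(3) gives the equality $\mm{SI}_H(V)=\mm{SI}_G(V)$, whence $\bb v\in\mm{SI}_H(V)$, as required. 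As $V$ and $\bb v$ were arbitrary, $H$ is prudent.

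The only step demanding genuine care is the passage to the truncations $R_n$: one must be sure that ``semi-invariant modulo $\pi^{n+1}V$'' for $H$ really does entail the same for $G$, which is why I would make the coaction computation explicit and note its stability under the reductions $R\to R_n$. Everything else is a direct assembly of Lemma \ref{17.10.2016--2}(2) (the inclusion $\mm{SI}_H(V)\subseteq\mm{SI}_G(V)$, applied after each reduction) and Lemma \ref{17.10.2016--2}(3) (the reverse inclusion, which is exactly where the injectivity of $f^*$ and the freeness of $V$ are consumed).
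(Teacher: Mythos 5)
Your proof is correct and follows essentially the same route as the paper's: apply Lemma \ref{17.10.2016--2}(2) to the reductions $V/\pi^{n+1}$ to see that each reduction of $\bb v$ is $G$-semi-invariant, invoke prudence of $G$ to conclude $\bb v\in\mm{SI}_G(V)$, and finish with Lemma \ref{17.10.2016--2}(3), which is where the injectivity of $f^*$ and freeness of $V$ enter. The extra care you take about reductions is fine but not strictly needed, since Lemma \ref{17.10.2016--2}(2) is stated for arbitrary objects of $\rep{R}{H}$, hence applies directly to the torsion modules $V/\pi^{n+1}$.
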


\begin{proof}
Let $V\in\repp{R}{H}$ and consider an element $\bb v\in V$ whose reduction modulo $\pi^{n+1}V$ is $H$-semi-invariant for any given $n$. From Lemma \ref{17.10.2016--2}-(2), we have 
\[
\mm{SI}_H(V/\pi^{n+1})\subset \mm{SI}_G(V/\pi^{n+1}).
\]
This forces, as $G$ is prudent,  $\bb v$ to be $G$-semi-invariant. But by Lemma \ref{17.10.2016--2}-(3), we know that $\bb v$ is then $H$-semi-invariant. 
\end{proof}

\begin{ex}\label{03.10.2018--1}Let $\Ga$ be an abstract group with Tannakian envelope $\Pi$ (see Section \ref{10.02.2017--1} for definitions). 
We contend that  $\Pi$  is prudent. Let $V$ be a free $R$-module of   rank $r$ affording a linear representation of $\Ga$. Let $\bb v\in V$ be such that, for any given $n\in\NN$, the element $\bb v+\pi^{n+1}V$ is semi-invariant. This implies that $R\bb v+\pi^{n+1}V$, regarded as an $R$-submodule of $V$, is invariant under $\Ga$. Hence, $\cap_n(R\bb v+\pi^{n+1}V)$ is invariant under $\Ga$. But this intersection is the closure of $R\bb v$ for the $\pi$-adic topology \cite[pp 55-6]{matsumura}, which is simply $R\bb v$ \cite[Theorem 8.10]{matsumura}. 
\end{ex}

%
%
%
%

\subsection{Prudence and the divisible ideal}\label{15.09.2017--1}
Our objective here is to establish:  
\begin{thm}\label{17.10.2016--6}If $G$ is  prudent, then $\g d(  G)=0$.  
\end{thm}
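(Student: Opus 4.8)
The plan is to read the vanishing of $\g d(G)$ as the statement that the closed subgroup scheme it cuts out is the whole of $G$, and to detect this equality by comparing semi-invariants for $G$ and for that subgroup. Write $H=\mm{Haus}(G)$ for the closed subgroup scheme defined by the divisible ideal (Proposition \ref{15.09.2017--2}), so that $R[H]=R[G]/\g d(G)$. The decisive first observation is that, because $\g d(G)\subset\pi^{n+1}R[G]$ for every $n$, the reduction map $R[G]\to R_n[G]$ factors through $R[H]$ and induces an isomorphism $R[H]/\pi^{n+1}R[H]\arou{\sim}R_n[G]$. Hence $H_n=G_n$ as group schemes over $R_n$ for every $n$, and for any $V\in\rep RG$ the $H$-coaction and the $G$-coaction on $V$ become the \emph{same} map after reduction modulo $\pi^{n+1}$ (both are the reduction of the $G$-coaction, since $R[G]\to R_n[G]$ factors through $R[H]$).

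From this I extract the bridge between prudence and the inclusion $H\subset G$. Let $V\in\repp RG$ and suppose $\bb v\in V$ is $H$-semi-invariant, so that the $H$-coaction sends $\bb v$ to $\bb v\otimes\theta$ for some $\theta\in R[H]$. Reducing this relation modulo $\pi^{n+1}$ shows that $\bb v+\pi^{n+1}V$ is $H_n$-semi-invariant; since $H_n=G_n$ and the two coactions agree after reduction, $\bb v+\pi^{n+1}V$ is in fact $G_n$-semi-invariant, and this for every $n$. Prudence of $G$ then forces $\bb v$ to be $G$-semi-invariant. In short, for every free representation one obtains the inclusion $\mm{SI}_H(V)\subset\mm{SI}_G(V)$.

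To conclude, suppose for contradiction that $\g d(G)\neq0$. Then $R[H]=R[G]/\g d(G)$ is a proper quotient; passing to the generic fibre, $\g d(G)$ is a nonzero Hopf ideal of $K[G_K]$ (it is already a $K$-vector space, being divisible), so $K[H_K]=K[G_K]/\g d(G)$ presents $H_K\subsetneq G_K$ as a \emph{proper} closed subgroup. Following Chevalley, there is then a finite-dimensional $G_K$-representation carrying a line $L$ whose stabiliser is exactly $H_K$. Descending $L$ to a primitive vector $\bb v$ inside a free $V\in\repp RG$ (using that the finite free subcomodules of $R[G]$ generate $\rep K{G_K}$ after $\ot K$, and that primitivity keeps the $H$-coaction integral) yields a vector that is $H$-semi-invariant but, because $G_K$ does not stabilise $L=K\bb v$, is not $G$-semi-invariant. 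This contradicts the inclusion $\mm{SI}_H(V)\subset\mm{SI}_G(V)$ of the previous paragraph. Therefore $\g d(G)=0$.

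The step I expect to be the main obstacle is the Chevalley input: producing, from the strict inclusion $H\subsetneq G$, a \emph{free} representation over $R$ together with a vector separating $H$-semi-invariance from $G$-semi-invariance. Over the field $K$ this is classical, but one must handle the passage from $K$ to $R$ carefully (scaling to a primitive lattice vector so that the $H$-coaction remains integral, as in the argument of Lemma \ref{17.10.2016--2}(3)) and, should $G$ fail to be of finite type, reduce Chevalley's theorem to the finite-type quotients of $G_K$ through which the separating representation factors.
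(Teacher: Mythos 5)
Your proposal is correct in substance, but it follows a genuinely different route from the paper's. The paper never works with $\mm{Haus}(G)$ directly: it first reduces, via the diptych, to quotients $\Psi'\to\Psi$ with $\Psi$ of finite type (Theorem \ref{17.10.2016--7}), shows that $\Psi'\to\Psi$ factors through the Neron blowup $\cn^\infty_H(\Psi)$ of the closure $H$ of $\mm{Haus}(\Psi')\ot K$ (Lemma \ref{17.10.2016--8}), transfers prudence along injective maps of Hopf algebras (Lemma \ref{17.10.2016--5}), and then applies an integral Chevalley theorem over $R$ (Lemma \ref{02.03.2017--1}) inside Lemma \ref{17.10.2016--9}, where the congruence doing the real work is $\cn^\infty_\ch(\cg)\ot R_n\simeq \ch\ot R_n$, quoted from \cite[Corollary 5.11]{DHdS15}. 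You replace this entire apparatus by the observation that $\mm{Haus}(G)\ot R_n=G\ot R_n$ --- immediate from $\g d(G)\subset\pi^{n+1}R[G]$ --- so that semi-invariance of reductions for $H=\mm{Haus}(G)$ is literally semi-invariance of reductions for $G$, and prudence can be applied to $G$ itself rather than to an auxiliary blowup; you then import Chevalley over the field $K$ and descend to a lattice. This is shorter and avoids blowups altogether, at the price of two inputs the paper's proof of this step does not need: existence of a $G$-stable lattice inside a $G_K$-representation (available by local finiteness \cite[Part I, 2.13]{jantzen}, which the paper invokes anyway in the reduction of Theorem \ref{17.10.2016--6} to Theorem \ref{17.10.2016--7}) and Chevalley for the possibly non-algebraic group $G_K$. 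On this last point, your phrase ``whose stabiliser is exactly $H_K$'' is not available in general: Waterhouse's theorem \cite[16.1]{waterhouse} is for algebraic groups. The repair is the one you sketch: write $K[G_K]=\bigcup_i K[G_i]$ as a directed union of finitely generated Hopf subalgebras, choose $i$ with $K[G_i]\cap\g d(G)\neq 0$, so that the image $H_i$ of $H_K$ in the algebraic quotient $G_i$ is a \emph{proper} closed subgroup, apply Chevalley to $H_i\subset G_i$ and pull the representation back to $G_K$. The stabilizer of the resulting line is then only the preimage of $H_i$, not $H_K$; but your argument never uses exactness --- it only needs the line to be $H$-stable and its full stabilizer to be proper, and both survive. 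Combined with the primitivity computation you describe (the same computation as in Lemma \ref{17.10.2016--2}(3), using flatness of $R[\mm{Haus}(G)]$ from Proposition \ref{15.09.2017--2}), your proof closes. What the paper's longer route buys is the stronger intermediate statement Theorem \ref{17.10.2016--7} and the lemmas surrounding it, which are reused in Section \ref{28.09.2017--3} and have independent interest.
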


Before embarking on a proof, we note a fundamental fact.   
\begin{prp}\label{15.09.2017--2}
The closed subscheme of $G$ defined by $\g d(G)$, $\mathrm{Haus}(G)$, is in fact a closed and flat subgroup scheme. 
\end{prp}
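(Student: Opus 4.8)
The plan is to prove two things about the ideal $\g d:=\g d(G)=\bigcap_n\pi^nR[G]$: that the quotient $R[G]/\g d$ is $R$-flat, and that $\g d$ is a Hopf ideal. Granting both, $\mm{Haus}(G)=\mm{Spec}\,(R[G]/\g d)$ is a closed flat subgroup scheme and the proposition follows. Write $A=R[G]$ and $B=A/\g d$. Flatness is the easy half: $B$ is $R$-flat because it is $\pi$-torsion-free. Indeed, if $\pi a\in\g d$ then $\pi a\in\pi^nA$ for every $n$, and since $A$ is torsion-free we may cancel $\pi$ to get $a\in\pi^{n-1}A$ for all $n$, whence $a\in\g d$. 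For the Hopf ideal property I must check compatibility with the counit $\ep$, the antipode $S$, and the comultiplication $\Delta$. The first two are immediate: $\ep(\g d)\subseteq\bigcap_n\pi^nR=0$ since $R$ is a DVR, and $S$ is $R$-linear with $S(\pi^nA)\subseteq\pi^nA$, so $S(\g d)\subseteq\g d$.

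The whole difficulty is therefore concentrated in the comultiplication, where I must show $\Delta(\g d)\subseteq\g d\ot_RA+A\ot_R\g d$. Since $B$ is flat, tensoring the sequence $0\to\g d\to A\to B\to0$ with $A$ and then with $B$ identifies $\g d\ot_RA+A\ot_R\g d$ with the kernel of the reduction map $q\colon A\ot_RA\to B\ot_RB$. Now fix $a\in\g d$. As $\Delta$ is $R$-linear and $a\in\bigcap_n\pi^nA$, we get $\Delta(a)\in\bigcap_n\pi^n(A\ot_RA)=\g d(A\ot_RA)$, and hence $q(\Delta(a))\in\g d(B\ot_RB)$. So everything reduces to the assertion that $B\ot_RB$ is $\pi$-adically separated, i.e.\ $\g d(B\ot_RB)=0$; this is the real content.

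To prove the vanishing I would argue element-wise and push the problem into a finite free submodule. Write $q(\Delta(a))=\sum_{i=1}^m\bar b_i\ot\bar c_i$ and let $F'$, $F''$ be the saturations in $B$ of the submodules generated by the $\bar b_i$ and by the $\bar c_i$ respectively. Each is a torsion-free $R$-module of finite rank which is moreover separated, being a submodule of the separated module $B$. The key structural input is the lemma that \emph{a torsion-free module of finite rank over a DVR that is $\pi$-adically separated is free of finite rank}. I would establish this by first showing $\dim_kM/\pi M\le\mm{rank}\,M$ (a $K$-linear dependence relation among more than $\mm{rank}\,M$ elements may be scaled to be primitive and reduced mod $\pi$), then deducing that the completion $\hat M$ is finitely generated over the complete ring $R$ by topological Nakayama, and finally noting that $M\hookrightarrow\hat M$ meets the torsion part of $\hat M$ trivially, so $M$ embeds in a finite free module over the Noetherian ring $R$ and is therefore finitely generated, hence free. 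Granting this, $F'$ and $F''$ are free, so $F'\ot_RF''$ is free and in particular separated.

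It remains to transfer separatedness from the finite free piece to the element at hand. Because $F'$ and $F''$ are saturated, $B/F'$ and $B/F''$ are torsion-free, and a short computation with the flatness of $B$ shows that $F'\ot_RF''$ is a saturated submodule of $B\ot_RB$ with torsion-free quotient. For a saturated submodule $N\subseteq M$ one has $N\cap\g d(M)=\g d(N)$, since any divisibility of an element of $N$ witnessed in $M$ can be realized inside $N$ once $M/N$ is torsion-free. Applying this with $N=F'\ot_RF''$ and $M=B\ot_RB$, and using $q(\Delta(a))\in F'\ot_RF''$, we obtain $q(\Delta(a))\in\g d(F'\ot_RF'')=0$, so $\Delta(a)\in\ker q=\g d\ot_RA+A\ot_R\g d$, as wanted. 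I expect the main obstacle to be precisely the structural lemma on finite-rank separated modules, together with the care required to identify $\ker q$ and the saturatedness of $F'\ot_RF''$; once these are in hand the Hopf-ideal verification is formal and $\mm{Haus}(G)$ is exhibited as a flat closed subgroup scheme.
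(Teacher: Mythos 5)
Your proof is correct, and its skeleton is the same as the paper's. The paper derives the proposition from a lemma stating that for flat $R$-modules $M,N$ one has $\g d(M\ot_RN)=\g d(M)\ot N+M\ot\g d(N)$; applied to $M=N=R[G]$ this is exactly your statement that $\De(\g d)$ lands in the kernel of $A\ot A\to B\ot B$. That lemma is in turn proved by precisely your two devices: reduction modulo the divisible submodules (your passage to $B$) to the case $\g d(M)=\g d(N)=0$, and then the element-wise argument trapping a divisible element inside $U\ot V$, where $U,V$ are saturations of finitely generated submodules, so that $U\ot V$ is saturated in $M\ot N$ (whence $\g d(M\ot N)\cap(U\ot V)=\g d(U\ot V)$) and free, hence separated. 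The only genuine divergence is how freeness of these saturations is obtained: the paper applies Kaplansky's theorem (Theorem \ref{kaplansky}), already on record, to the finite-rank module $E^{\rm sat}$, while you re-prove this special case by hand ($\dim_k M/\pi M\le\mathrm{rank}\,M$, passage to the completion, topological Nakayama, Noetherianity). Your argument is valid and makes the proposition independent of Kaplansky's result, but note that completeness of $R$ (in force in this section) is essential to it --- over a non-complete DVR a separated torsion-free module of finite rank need not be free --- so no generality is gained. One point you should state rather than assume: $B$ itself is separated; this is immediate, since $\g d\subseteq\pi^nA$ for all $n$ gives $\pi^nB=\pi^nA/\g d$ and hence $\bigcap_n\pi^nB=\g d/\g d=0$.
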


The proof of this Proposition follows easily  from item (3) of 
\begin{lem}Let $M$ and $N$ be flat $R$-modules.
\begin{enumerate}\item Assume that  $\g d(M)=(0)$, and let $E\subset M$ be finitely generated. Then $E^{\rm sat}$ is free.  
\item If   $\g d(M)=(0)$ and $\g d(N)=(0)$,  then $\g d(M\ot_R N)=(0)$. 
\item The divisible submodule of $M\ot N$ is simply $\g d(M)\ot N+M\ot\g d(N)$. 
\end{enumerate}      
\end{lem}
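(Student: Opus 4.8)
The plan is to treat the three items in the order $(1)\Rightarrow(2)\Rightarrow(3)$, using Kaplansky's theorem (Theorem \ref{kaplansky}) as the only substantial external input. Throughout I would freely use that over the DVR $R$ a module is flat if and only if it is torsion-free, and that $\g d(M)=\cap_n\pi^nM$.

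For (1), I would first note that the saturation $E^{\rm sat}$ (the preimage in $M$ of the torsion submodule of $M/E$, equivalently $\{m\in M:\pi^km\in E\text{ for some }k\}$) is again torsion-free, hence flat. Its generic fibre $E^{\rm sat}\ot K$ coincides with $E\ot K$, which is finite-dimensional since $E$ is finitely generated; in particular it has countable rank. Finally $\cap_n\pi^nE^{\rm sat}\subseteq\cap_n\pi^nM=\g d(M)=0$. Theorem \ref{kaplansky} then yields freeness at once.

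For (2), assume $\g d(M)=\g d(N)=0$ and take $z\in\g d(M\ot N)$. Writing $z$ as a finite sum $\sum_i m_i\ot n_i$, I would let $E\subset M$ and $F\subset N$ be the saturations of the finitely generated submodules spanned by the $m_i$ and the $n_i$; by (1) both are free of finite rank, so $E\ot F$ is free and therefore $\pi$-adically separated. The crux is to show $z$ is divisible by every power of $\pi$ already inside $E\ot F$. For this I would prove that $E\ot F\to M\ot N$ is injective with flat cokernel: since $M/E$ and $N/F$ are torsion-free (the saturations being saturated), they are flat, so tensoring $0\to E\to M\to M/E\to0$ with $F$ and $0\to F\to N\to N/F\to0$ with $M$ and splicing the two resulting short exact sequences exhibits $E\ot F$ as a saturated submodule of $M\ot N$ with flat quotient. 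Saturation lets divisibility descend: from $z=\pi^nw_n$ in $M\ot N$ with $z\in E\ot F$ one gets $w_n\in E\ot F$. Hence $z\in\cap_n\pi^n(E\ot F)=0$.

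For (3), the inclusion $\g d(M)\ot N+M\ot\g d(N)\subseteq\g d(M\ot N)$ is immediate, since $d\ot x=\pi^n(m_n\ot x)$ whenever $d=\pi^nm_n$, and $\g d(M\ot N)$ is a submodule. For the reverse inclusion I would pass to $\ov M=M/\g d(M)$ and $\ov N=N/\g d(N)$. A short computation shows $\g d(\ov M)=0$ (the divisible submodule is saturated, and any element divisible modulo $\g d(M)$ already lies in $\cap_n\pi^nM$), and likewise for $\ov N$. Using flatness to compute kernels, the composite surjection $M\ot N\to\ov M\ot\ov N$ has kernel exactly $\g d(M)\ot N+M\ot\g d(N)$. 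Since surjections carry divisible elements to divisible elements, the image of $\g d(M\ot N)$ lands in $\g d(\ov M\ot\ov N)$, which vanishes by (2); thus $\g d(M\ot N)$ lies in that kernel, giving the desired inclusion. I expect the main obstacle to be the descent of divisibility in step (2)—showing that an element divisible by all powers of $\pi$ in the large module $M\ot N$ is already so divisible inside the free submodule $E\ot F$—which is precisely where the purity of $E\ot F$ in $M\ot N$, and hence the flatness of all the intervening quotients, must be invoked with care.
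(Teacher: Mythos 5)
Your proposal is correct and follows essentially the same route as the paper: item (1) via Kaplansky's theorem, item (2) by trapping a divisible element inside a tensor product of free, saturated, finite-rank submodules (whose saturation in $M\ot N$ lets divisibility descend to a separated free module), and item (3) by passing to $M/\g d(M)$ and $N/\g d(N)$ and invoking (2). The paper's proof is just a terser version of the same argument—your exact-sequence splicing to prove purity of $E\ot F$ in $M\ot N$, and the kernel computation in (3), are precisely the details the paper leaves implicit.
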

\begin{proof}(1) Clearly  $E\ot K\stackrel{\sim}{\to} E^{\rm sat}\ot K$, and we can apply  Theorem \ref{kaplansky} to  conclude. 

(2) Let $t=\sum u_i\ot v_i\in   M\ot N$ belong to $\g d(M\ot N)$. By saturating $\sum Ru_i$ and $\sum Rv_i$, and applying (1), we see that  there exist a free and saturated submodule $U$ of $M$ and a free and saturated submodule $V$ of $N$   such that $t\in U\ot V$. Now $U\ot V$ is saturated in $M\ot N$, and hence $\g d(M\ot N)\cap (U\ot V)=\g d(U\ot V)$. Therefore, $t=0$ since $\g d(U\ot V)=(0)$.

(3) This is a simple consequence of (2).
\end{proof}

We now head towards the proof of Theorem \ref{17.10.2016--6}. 
In fact, this theorem is a consequence of the ensuing result, which therefore becomes the object of our efforts. To state it, we employ the notion of the  diptych \cite[Section 4]{DHdS15}, briefly  recalled  here for the convenience of the reader.  Given an arrow $u:\ch\to\cg$ of $(\bb{FGSch}/R)$, there are two reasonable ways to define its image: the closed subgroup scheme of $\cg$ cut out by the kernel of $u^\#:R[\cg]\to R[\ch]$ (the schematic image of $u$) and the spectrum of the saturation of $u^\#(R[\cg])$ in $R[\ch]$. If $\Ps$ stands for the former and $\Ps'$ for the latter, we then have a commutative diagram 
\[
\xymatrix{\ch\ar[r]^u\ar[d]_{\text{faithfully flat}}&\cg\\\Ps'\ar[r]&\Ps\ar[u]_{\text{closed immersion}}, \
}\] 
called the diptych. 

\begin{thm}\label{17.10.2016--7}Assume that $G$ is prudent and let  $V\in\repp{R}{G}$ be arbitrary. Consider the diptych of $\rho$
\[
\xymatrix{\ar[d]_{\text{faithfully flat}}G\ar[r]^-{\rho}& \bb{GL}(V) \\ \Ps'\ar[r]&\Ps,\ar[u]_{\text{closed immersion}}}
\]
so that $\Psi'\ot K=\Ps\ot K$ is of finite type over $K$. Then $\g d(\Ps')=0$.  
\end{thm}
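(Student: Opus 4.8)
The plan is to assume $\g d(\Ps')\neq 0$ and derive a contradiction with prudence. First I would record two preliminary facts. Since $G\to\Ps'$ is faithfully flat, the comorphism $R[\Ps']\to R[G]$ is injective, so Lemma \ref{17.10.2016--5} transfers prudence from $G$ to $\Ps'$; thus $\Ps'$ is prudent. Next, by Proposition \ref{15.09.2017--2} applied to $\Ps'$, the ideal $\g d(\Ps')=\cap_m\pi^mR[\Ps']$ cuts out a closed and \emph{flat} subgroup scheme $\mm{Haus}(\Ps')\hookrightarrow\Ps'$. The crucial observation is that, because $\g d(\Ps')\subseteq\pi^{n+1}R[\Ps']$ for every $n$, the image of $\g d(\Ps')$ in $R_n[\Ps']$ vanishes; hence $R_n[\mm{Haus}(\Ps')]=R_n[\Ps']$, i.e. the closed immersion $\mm{Haus}(\Ps')\to\Ps'$ is an isomorphism modulo $\pi^{n+1}$ for every $n$. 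On the other hand, since $\g d(\Ps')$ is a nonzero divisible (hence $K$-) ideal, its image generates a nonzero proper ideal of $K[\Ps']=K[\Ps]$, so that $\mm{Haus}(\Ps')\ot K$ is a \emph{proper} closed subgroup of the finite type group scheme $\Ps'\ot K=\Ps\ot K$.

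Second, I would realise this proper subgroup as the stabilizer of a line, integrally. Composing $\mm{Haus}(\Ps')\ot K\hookrightarrow\Ps\ot K$ with the closed immersion $\Ps\hookrightarrow\bb{GL}(V)$, a theorem of Chevalley (see \cite[16.1]{waterhouse}) provides a tensor construction $W_K$ on $V_K$ (a subrepresentation of a sum of spaces $V_K^{\ot a}\ot(V_K^*)^{\ot b}$, together with a line $L$ in a suitable exterior power) whose stabilizer in $\bb{GL}(V_K)$ is exactly $\mm{Haus}(\Ps')\ot K$. The point is that all these operations are defined already over $R$: starting from the free module $V$ and intersecting the relevant $K$-subspaces with the corresponding free $R$-lattices inside the tensor spaces (whose divisible ideals vanish), one obtains a saturated lattice $W\in\repp R{\bb{GL}(V)}$, hence an object of $\repp R{\Ps'}$, and a primitive vector $\bb v\in W$ spanning $L$ over $K$; being primitive, $\bb v$ belongs to a basis of $W$. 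A saturation argument shows that such an intersection is again a subcomodule, so the construction stays within representations. Because $\mm{Haus}(\Ps')$ is flat, the $\mm{Haus}(\Ps')\ot K$-stability of $L$ upgrades to the statement that $R\bb v$ is an honest $\mm{Haus}(\Ps')$-subrepresentation: writing the $\mm{Haus}(\Ps')$-coaction of $\bb v$ in the basis, the off-diagonal coefficients lie in the torsion-free ring $R[\mm{Haus}(\Ps')]$ and vanish after $\ot K$, hence vanish.

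Finally I would play the two fibres against each other. Since $\mm{Haus}(\Ps')$ and $\Ps'$ have the same reduction modulo $\pi^{n+1}$, the $\Ps'$-coaction and the $\mm{Haus}(\Ps')$-coaction on $W\ot R_n$ coincide; therefore the $\mm{Haus}(\Ps')$-semi-invariance of $\bb v$ forces its reduction modulo $\pi^{n+1}W$ to be $\Ps'$-semi-invariant, for \emph{every} $n$. As $\Ps'$ is prudent and $\bb v$ belongs to a basis, Definition \ref{17.10.2016--4} yields that $\bb v$ is $\Ps'$-semi-invariant. But then $L=K\bb v$ would be stable under all of $\Ps'\ot K$, contradicting that its stabilizer is the proper subgroup $\mm{Haus}(\Ps')\ot K$. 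This contradiction gives $\g d(\Ps')=0$. I expect the main obstacle to be the integral realisation of the second paragraph—producing the Chevalley line inside a genuine object of $\repp R{\Ps'}$ and checking that the saturated lattice one intersects out remains a subrepresentation—together with the (easier but essential) use of the flatness of $\mm{Haus}(\Ps')$ to pass from generic to integral semi-invariance.
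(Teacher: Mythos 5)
Your overall strategy is sound and it is genuinely different from the paper's. The paper never manipulates $\mm{Haus}(\Ps')\subset\Ps'$ directly: it forms the schematic closure $H$ of $\mm{Haus}(\Ps')\ot K$ inside the finite-type model $\Ps$, factors $\Ps'\to\cn^\infty_H(\Ps)\to\Ps$ (Lemma \ref{17.10.2016--8}), transfers prudence to the blowup via Lemma \ref{17.10.2016--5}, and concludes $H=\Ps$ from Lemma \ref{17.10.2016--9}, whose proof applies the \emph{integral} Chevalley lemma (Lemma \ref{02.03.2017--1}) to the finite-type pair $H\subset\Ps$; there the isomorphism $\cn^\infty_H(\Ps)\ot R_n\simeq H\ot R_n$ plays exactly the role of your observation that $\mm{Haus}(\Ps')\ot R_n=\Ps'\ot R_n$ (which is correct and more elementary, since $\g d(\Ps')\subset\pi^{n+1}R[\Ps']$). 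What the paper buys with the closure and the blowup is that Chevalley is only ever invoked over $R$ for finite-type groups, so no lattice descent is needed; what you buy by staying with $\mm{Haus}(\Ps')\subset\Ps'$ is that the blowup machinery disappears entirely, at the price of descending Chevalley's line from $K$ to $R$ by hand. Your first and third paragraphs (prudence of $\Ps'$, properness of $\mm{Haus}(\Ps')\ot K$, coincidence of the $\Ps'$- and $\mm{Haus}(\Ps')$-coactions modulo every $\pi^{n+1}$, flatness of $R[\mm{Haus}(\Ps')]$ upgrading generic stability of $L$ to stability of $R\bb v$, and the final contradiction) are all correct.

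The one step that is wrong as stated is the realization of the Chevalley representation as a \emph{subrepresentation} of a sum $\bigoplus_i V_K^{\ot a_i}\ot(V_K^*)^{\ot b_i}$. Over a field of positive characteristic this is not guaranteed: the Chevalley module is carved out of finite-dimensional subcomodules of the regular representation of $\bb{GL}(V)\ot K$, whose building blocks are symmetric powers, and $\mathrm{Sym}^aV_K$ is a quotient, not in general a subobject, of $V_K^{\ot a}$; the standard statement is only that every finite-dimensional representation of $\bb{GL}(V)\ot K$ is a \emph{subquotient} of such sums. (If $K$ has characteristic zero, semisimplicity turns subquotients into subobjects and your claim is fine, but the theorem is asserted for an arbitrary complete DVR.) You correctly flagged this step as the main obstacle, and it is repairable inside your own scheme: write $W_K=A_K/B_K$ with $B_K\subset A_K\subset\La\ot K$ subrepresentations, where $\La=\bigoplus_i V^{\ot a_i}\ot(V^*)^{\ot b_i}$ is the free integral tensor lattice; set $A=A_K\cap\La$, $B=B_K\cap\La$ and $W=A/B$. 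Your saturation argument shows $A$ and $B$ are subcomodules over $R[\bb{GL}(V)]$; moreover $W$ is finitely generated and torsion-free (if $\pi w\in B$ with $w\in A$, then $w\in B_K\cap A=B$), hence free, with $W\ot K=W_K$, so $W\in\repp{R}{\Ps'}$ by restriction along $\Ps'\to\Ps\to\bb{GL}(V)$, and the line and the primitive vector $\bb v$ are found in $W$ exactly as you describe. With this modification the proof is complete.
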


\begin{proof}[Proof that Theorem \ref{17.10.2016--7} $\Rightarrow$ Theorem \ref{17.10.2016--6}] Let $a\in\g d(G)$.  Endowing $R[G]$ with its right-regular action and using ``local finiteness'' \cite[Part I, 2.13]{jantzen}, we can find a subcomodule $V\subset R[G]$ which contains $a$ and is finitely generated as an $R$-module. Since $R[\Psi]$ is constructed as the image of the obvious morphism $R[\mathbf{GL}(V)]\to R[G]$, and $R[\Psi']$ is the saturation of $R[\Psi]$ in $R[G]$ (see \cite[Section 4]{DHdS15}), we conclude that $a\in R[\Psi']$. Now, because $R[\Psi']$ is saturated in $R[G]$, we have $\g d(\Psi')=\g d(G)\cap R[\Psi']$. If Theorem \ref{17.10.2016--7} is true, then $a=0$.  
\end{proof}

We need some lemmas to establish Theorem \ref{17.10.2016--7}. In these results we employ the notion of closure of a closed subscheme of  the generic fibre \ega{IV}{2}{2.8, 33ff} as well as the constructions of Neron blowups appearing in Section \ref{23.02.2017--3}.

\begin{lem}\label{17.10.2016--8}
Let $f:\cg'\to \cg$ be a morphism of $(\bb{FGSch}/R)$. We suppose that $\cg$ is of finite type and that $f\ot K$ is an isomorphism.  Let $H\subset \cg$ stand for the closure of \[{\rm Haus}(\cg')\ot K\subset \cg'\ot K=\cg\ot K\] in $\cg$. Then $f$ factors through  
\[
\cn^\infty_H(\cg)\aro \cg. 
\]
\end{lem}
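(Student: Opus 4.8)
The plan is to prove the factorization at the level of rings of functions, using the explicit description of the Neron blowup recalled in Section \ref{23.02.2017--3}. Since $f\ot K$ is an isomorphism, I would identify $K[\cg]$ with $K[\cg']$ via $(f\ot K)^*$ and regard $R[\cg]$, $R[\cg']$ and the blowup algebra all as $R$-subalgebras of this common generic Hopf algebra. First I would record that $H$ is a genuine \emph{flat closed subgroup scheme}, so that $\cn^\infty_H(\cg)$ is actually defined: indeed $\mm{Haus}(\cg')$ is a flat closed subgroup by Proposition \ref{15.09.2017--2}, hence $\mm{Haus}(\cg')\ot K$ is a subgroup of $\cg'\ot K=\cg\ot K$, and its schematic closure \cite[${\rm IV}_2$, 2.8]{EGA} is automatically flat (its ideal, being the contraction of an ideal of $K[\cg]$, is saturated) and a subgroup. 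As noted in Section \ref{23.02.2017--3}, one then has $R[\cn^\infty_H(\cg)]=\bigcup_n R[\cg][\pi^{-n-1}I]$, where $I\subset R[\cg]$ is the ideal of $H$.

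Next I would translate the factorization into a divisibility statement. A morphism $\cg'\to\cn^\infty_H(\cg)$ over $\cg$ is the same as an $R$-algebra map $R[\cn^\infty_H(\cg)]\to R[\cg']$ extending $f^*$; inside the generic fibres such a map must be the restriction of the isomorphism $(f\ot K)^*$, and it exists exactly when $(f\ot K)^*$ carries $\bigcup_n R[\cg][\pi^{-n-1}I]$ into $R[\cg']$. As $f^*(R[\cg])\subset R[\cg']$ already, this reduces to $\pi^{-n-1}f^*(I)\subset R[\cg']$ for every $n$, i.e. to the single inclusion
\[
f^*(I)\subset\bigcap_n\pi^nR[\cg']=\g d(\cg').
\]
So the whole proof comes down to verifying this.

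To establish it, I would describe $I$ concretely. Writing $J_K\subset K[\cg']$ for the ideal of $\mm{Haus}(\cg')\ot K$, that is the ideal $\g d(\cg')K[\cg']$ generated by the divisible ideal, the definition of schematic closure gives $I=\{g\in R[\cg]:f^*(g)\in J_K\}$, whence $f^*(I)\subset J_K\cap R[\cg']$. It therefore suffices to prove the key lemma $J_K\cap R[\cg']=\g d(\cg')$. The inclusion $\supseteq$ is clear; for $\subseteq$ I would take $h=\sum_i a_id_i$ with $a_i\in K[\cg']$ and $d_i\in\g d(\cg')$, clear denominators by choosing $m$ with $\pi^ma_i\in R[\cg']$ for all $i$, and use $d_i\in\pi^{n+m}R[\cg']$ for every $n$ to get $a_id_i\in\pi^nR[\cg']$; hence $h\in\pi^nR[\cg']$ for all $n$, i.e. $h\in\g d(\cg')$.

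The only genuinely delicate point is this last lemma, and behind it the two basic facts about $\g d(\cg')$: that it is \emph{divisible} (so denominators can be absorbed) and that it is \emph{saturated} in $R[\cg']$ (so passing between $R$ and $K$ is harmless, which is also what guarantees flatness of $H$). Everything else is the formal bookkeeping of identifying the coordinate rings inside the common generic fibre, and compatibility with the Hopf structure is automatic, since the algebra map produced is simply the restriction of the Hopf isomorphism $(f\ot K)^*$.
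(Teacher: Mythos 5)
Your proposal is correct and follows essentially the same route as the paper's own proof: identify the ideal $I$ of $H$ as the contraction of the ideal of $\mm{Haus}(\cg')\ot K$ under the generic identification, reduce the factorization to the inclusion $f^*(I)\subset\g d(\cg')=\bigcap_n\pi^nR[\cg']$, and prove that inclusion by clearing denominators and using that $\g d(\cg')$ consists of infinitely $\pi$-divisible elements. The only differences are cosmetic — you additionally spell out why $H$ is flat and why the Hopf compatibility is automatic, points the paper leaves implicit — so there is nothing to correct.
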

\begin{proof}Algebraically, $H$ is cut out by the ideal 
\[
I=\{ a\in R[\cg]\,:\, 1\ot a\in K\ot \g d(\cg')  \}. 
\]
By definition, $R[\cn_H^\infty(\cg)]$ is the $R$-subalgebra of $K[\cg]$ obtained by adjoining to $R[\cg]$ all elements of the form $\pi^{-n}\ot a$, where $a\in I$. We then need to prove that for each $a\in I$ and each $n\in\NN$, there exists $\de_n\in R[\cg']$ such that $a=\pi^n\de_n$. 
So let $a\in I$. By construction,    $1\ot a$ belongs to the ideal $K\ot \g d(\cg')$. Hence, there exists some $m\in\NN$ and some $\de\in\g d(\cg')$ such that  $1\ot a=\pi^{-m}\ot \de$. This  shows that $\pi^m a\in\g d(\cg')$. Now, for every $n\in\NN$, we pick $\de_n$ such that $\pi^{n+m}\de_n=\pi^ma$, so that $\pi^n\de_n=a$. 

\end{proof}

\begin{lem}\label{17.10.2016--9}Let $\ch\to \cg$ be a closed embedding of flat   group schemes of finite type over $R$. If   $\cn_\ch^\infty(\cg)$ is prudent, then   $\ch=\cg$. 
\end{lem}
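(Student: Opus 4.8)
The plan is to argue by contraposition: I would show that if $\ch\neq\cg$, then the divisible ideal $\g d(\cn)$ of $\cn:=\cn_\ch^\infty(\cg)$ is nonzero, which by Theorem \ref{17.10.2016--6} rules out prudence. Thus prudence of $\cn$ forces $\ch=\cg$.

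First I would invoke the explicit description of the blowup recalled in Section \ref{23.02.2017--3}. Writing $I\subset R[\cg]$ for the ideal cutting out the closed flat subgroup $\ch$, we have
\[
R[\cn]=\bigcup_n R[\cg][\pi^{-n-1}I]\subset K[\cg].
\]
Since $\cn\to\cg$ induces an isomorphism on generic fibres and $R[\cg]$ is $R$-flat, both $R[\cg]$ and $R[\cn]$ sit inside $K[\cg]=K[\cn]$, and the structural map $R[\cg]\to R[\cn]$ is injective. This injectivity is the one point to check with a little care, though it is immediate from flatness and the identification of everything inside $K[\cg]$.

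Next I would suppose $\ch\neq\cg$, so that $I\neq(0)$, and pick $a\in I$ with $a\neq0$. By the display above, $\pi^{-n-1}a\in R[\cn]$ for every $n$, hence $a=\pi^{n+1}\cdot(\pi^{-n-1}a)\in\pi^{n+1}R[\cn]$ for all $n$. Because $a$ is nonzero in $R[\cn]$ (by the injectivity just noted), it is a nonzero element of $\g d(\cn)=\cap_m\pi^m R[\cn]$, so $\g d(\cn)\neq(0)$.

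Finally, I would apply Theorem \ref{17.10.2016--6}: a prudent group scheme has vanishing divisible ideal, so $\g d(\cn)\neq(0)$ contradicts the prudence of $\cn$. Therefore $I=(0)$, that is $\ch=\cg$. The argument is almost entirely formal once the description of $R[\cn]$ and the inclusion $R[\cg]\hookrightarrow R[\cn]\subset K[\cg]$ are recorded; the substantive input is Theorem \ref{17.10.2016--6}, which is where the real obstacle (relating prudence to the divisible ideal) has already been overcome.
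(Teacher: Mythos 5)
Your argument is circular within the paper's logical development. The pivot of your proof is Theorem \ref{17.10.2016--6} (prudence implies $\g d(G)=0$), but in the paper that theorem is \emph{not} available at this stage: it is deduced from Theorem \ref{17.10.2016--7}, whose proof invokes precisely Lemma \ref{17.10.2016--9}, the statement you are asked to prove. So the chain of dependencies runs Lemma \ref{17.10.2016--9} $\Rightarrow$ Theorem \ref{17.10.2016--7} $\Rightarrow$ Theorem \ref{17.10.2016--6}, and using the last of these to establish the first short-circuits the whole section. The lemma is exactly the bridge that lets one pass from the representation-theoretic definition of prudence to the module-theoretic conclusion about the divisible ideal; it therefore has to be proved directly from the definition of prudence, not from its eventual consequence.

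To be clear about what is and is not salvageable: your computation that $\ch\neq\cg$ forces $\g d(\cn_\ch^\infty(\cg))\neq0$ is correct (any nonzero $a$ in the ideal $I$ of $\ch$ satisfies $\pi^{-n-1}a\in R[\cn_\ch^\infty(\cg)]$ for all $n$, and $R[\cg]\hookrightarrow R[\cn_\ch^\infty(\cg)]$ since everything sits in $K[\cg]$); the same observation is used in the paper's proof of Corollary \ref{29.09.2017--1}. What is missing is a proof, from prudence alone, that this divisibility cannot occur. The paper supplies this by a Chevalley-type argument: Lemma \ref{02.03.2017--1} produces $V\in\repp{R}{\cg}$ and a line $R\bb v\subset V$ with $\bb{Stab}_\cg(R\bb v)=\ch$; since $\cn_\ch^\infty(\cg)\ot R_n\to\cg\ot R_n$ factors through (indeed maps isomorphically onto) $\ch\ot R_n$, each reduction $\bb v+\pi^{n+1}V$ is semi-invariant for $\cn_\ch^\infty(\cg)$; prudence then makes $\bb v$ itself semi-invariant for $\cn_\ch^\infty(\cg)$, and Lemma \ref{17.10.2016--2}-(3) upgrades this to $\cg$-semi-invariance, forcing $\ch=\bb{Stab}_\cg(R\bb v)=\cg$. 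That representation-theoretic step is the genuine content of the lemma, and your proposal does not replace it.
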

\begin{proof} 
From Lemma \ref{02.03.2017--1} below,  we can find  $V\in\repp{R}{\cg}$ and a line $R\bb v\subset V$
such that 
\[
\bb{Stab}_{\cg}(R\bb v)=\ch.
\]
The $\ch$-semi-invariance of $\bb v$ forces  $\bb v+\pi^{n+1}V\in V/\pi^{n+1}$ to be $\ch$-semi-invariant for all $n$. (Use Lemma \ref{17.10.2016--2}-(1).) But since for each $n$ the base change
\[\cn^\infty_\ch(\cg)\ot R_n\aro \cg\ot R_n\]
factors through a morphism 
\[
\cn_\ch^\infty(\cg)\ot R_n\aro \ch\ot R_n,
\]
and this is the central point, we conclude that  $\bb v+\pi^{n+1}V$ is $\cn_\ch^\infty(\cg)$-semi-invariant. (The arrow $\cn_\ch^\infty(\cg)\ot R_n\to \ch\ot R_n$ is even an isomorphism \cite[Corollary 5.11]{DHdS15}.) As $\cn_\ch^\infty(\cg)$ is assumed prudent, $\bb v$ is $\cn_\ch^\infty(\cg)$-semi-invariant as well. But then,  Lemma \ref{17.10.2016--2}-(3) guarantees that $\bb v\in\mathrm{SI}_{\cg}(V)$. This implies that $R\bb v$ is stable under $\cg$, so that $\cg=\ch$.
\end{proof}

The next result is widespread in the case of a base-field, see for example \cite[16.1]{waterhouse}. It is explicitly proved in \cite[Proposition 1.2]{pappas-rapoport08} but we include the details because the proof given in \cite[16.1]{waterhouse}---which we follow---is quite transparent. 

\begin{lem}\label{02.03.2017--1}Let $\ch\to \cg$ be a closed embedding in $(\bb{FGSch}/R)$. If  $\cg$ is of finite type, then, there exists a certain $V\in\repp{R}{\cg}$ and an element $\bb v\in V$ such that 
\[
\mathbf{Stab}_{\cg}(R\bb v)=\ch.
\] 
\end{lem}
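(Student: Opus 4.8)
The plan is to carry out Chevalley's classical construction of a closed subgroup as the stabiliser of a line, taking care that over the DVR $R$ every module in sight is free and every relevant submodule is saturated, so that the field-theoretic argument survives base change. First I would record the basic objects. Since $\cg$ is flat and of finite type, $R[\cg]$ is a finitely generated $R$-algebra, hence Noetherian; let $I\subset R[\cg]$ be the Hopf ideal defining the closed immersion $\ch\to\cg$. Because $\ch$ is $R$-flat, $R[\ch]=R[\cg]/I$ is torsion-free, so $I$ is saturated in $R[\cg]$; moreover $I$ stays an ideal cutting out $\ch\ot_RA$ after any base change $R\to A$, as the sequence $0\to I\to R[\cg]\to R[\ch]\to0$ remains exact upon $\ot_RA$. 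Endowing $R[\cg]$ with its right regular coaction and invoking local finiteness \cite[Part I, 2.13]{jantzen}, I would choose a subcomodule $W\subset R[\cg]$ that is finitely generated over $R$ and contains a finite set of ideal generators of $I$. Being finitely generated and torsion-free over a DVR, $W$ is free.

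Next I set $M:=I\cap W$. If $w\in W$ satisfies $\pi^nw\in M$, then $\pi^nw\in I$, so $w\in I$ by saturation of $I$, whence $w\in M$; thus $M$ is saturated in $W$, $W/M$ is free, and $W=M\op N$ with $M,N$ free. The key claim is the functorial equality $\mathbf{Stab}_{\cg}(M)=\ch$, where the stabiliser is the transporter of $M$ into itself for the right regular coaction (a closed subgroup scheme by the standard representability of transporters \cite{SGA3}). For the inclusion $\ch\subseteq\mathbf{Stab}_{\cg}(M)$, I would use that right translation by points of $\ch$ preserves the functions vanishing on $\ch$, that is the ideal $I$, hence preserves $M=I\cap W$; the flat base change noted above makes this hold over every $A$. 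For the reverse inclusion, a $g\in\cg(A)$ stabilising $M\ot A$ stabilises the ideal it generates, namely $I\ot A$ (here $M$ generates $I$ by our choice of $W$); composing the corresponding algebra automorphism with the counit then forces $\phi_g(I)=0$, i.e. $g$ factors through $\ch$. This is exactly the functor-of-points transcription of the computation $f(g)=(\rho_gf)(e)=0$ valid over a field.

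I then pass from the subspace to a line. Put $d=\mm{rank}_RM$, $V:=\bigwedge^dW\in\repp R\cg$, and let $\bb v:=e_1\wedge\cdots\wedge e_d$ for a basis $e_1,\dots,e_d$ of $M$; since $M$ is a direct summand of $W$, the line $R\bb v=\bigwedge^dM$ is saturated in $V$. Because $M$ is a direct summand, one has $M\ot A=\{w\in W\ot A\,:\,w\wedge\bb v=0\}$ for every $A$, so a $g$ scaling $\bb v$ by a unit preserves $M\ot A$ and conversely; therefore $\mathbf{Stab}_{\cg}(R\bb v)=\mathbf{Stab}_{\cg}(M)=\ch$, which is the desired statement.

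The main obstacle I anticipate is the second claim: establishing the equality of subgroup schemes $\mathbf{Stab}_{\cg}(M)=\ch$ as functors on \emph{all} $R$-algebras, rather than merely on field- or $R$-valued points. The classical proof is a computation with right translations and evaluation at the identity; the work over $R$ lies precisely in the bookkeeping that lets that computation commute with arbitrary base change --- flatness of $\ch$ (so that $I\ot A$ is again the ideal of $\ch\ot_RA$) and saturation of $M$ in $W$ (so that the exterior-power description of $M$ recovers it after any $\ot_RA$). Once these are in place, the passage to the line via $\bigwedge^d$ is routine.
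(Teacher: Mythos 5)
Your overall strategy is the same as the paper's: this is the classical Chevalley argument, with the same ingredients (a finitely generated subcomodule of the right regular representation containing generators of the defining ideal $I$, its saturated intersection with $I$, the stabiliser of that submodule, and a top exterior power to pass to a line). The direction $\mathbf{Stab}_{\cg}(M)\subseteq\ch$ (evaluation at the identity) and the reduction to a line are sound. The problem lies in the inclusion $\ch\subseteq\mathbf{Stab}_{\cg}(M)$ --- precisely the step you flag as the main obstacle, and then dispose of too quickly.

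For $h\in\ch(A)$ your argument yields $\rho_h(M\ot A)\subseteq (I\ot A)\cap(W\ot A)$: right translation preserves the ideal of $\ch\ot A$ (fine, by flatness of $R[\ch]$) and preserves $W\ot A$ (fine, $W$ is a subcomodule). To conclude you need $(I\ot A)\cap(W\ot A)=M\ot A$ for \emph{every} $R$-algebra $A$, i.e.\ that the formation of the intersection $M=I\cap W$ commutes with arbitrary, generally non-flat, base change. Neither flatness of $R[\ch]$ nor saturation of $M$ in $W$ gives this. Concretely, writing $\bar W$ for the image of $W$ in $R[\ch]$ (a free module, so $0\to M\to W\to\bar W\to 0$ splits), the identity you need is equivalent to injectivity of $\bar W\ot A\to R[\ch]\ot A$ for all $A$, i.e.\ to flatness (= torsion-freeness, over a DVR) of $R[\ch]/\bar W$, i.e.\ to $\bar W$ being \emph{saturated in $R[\ch]$} --- equivalently, $W+I$ saturated in $R[\cg]$. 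Nothing in the construction arranges this: a sum of saturated submodules need not be saturated, and saturation of $M$ in $W$ concerns a different quotient. (A similar base-change carelessness occurs when you treat $W\ot A$ as a submodule of $R[\cg]\ot A$, which requires $W$ itself to be saturated in $R[\cg]$.) The inclusion you want is nevertheless \emph{true}, and the paper proves it by a different device: instead of checking the stabiliser condition point-by-point over each $A$, it establishes the universal (comodule) form of the statement, $(\id\ot\ps)\De(W)\subseteq W\ot R[\ch]$, where $\ps:R[\cg]\to R[\ch]$ is the projection. This follows because $W$ (your big comodule) and $I$ are both $R[\ch]$-subcomodules of $R[\cg]$ and because tensoring with the \emph{fixed flat} module $R[\ch]$ commutes with intersections (Matsumura, Theorem 7.4(i)) --- a tensor by a flat module, not a base change along an arbitrary $A$. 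That single inclusion specialises, via $h^{*}=\bar h^{*}\circ\ps$, to $\rho_h(M\ot A)\subseteq M\ot A$ for every $A$ and every $h\in\ch(A)$; in the paper's formulation it gives the ideal-theoretic identity $J\subseteq I$, where $J$ is the defining ideal of the stabiliser. Your proof becomes correct if you replace the pointwise argument by exactly this comodule computation (or otherwise enlarge $W$ so that its image in $R[\ch]$ is saturated).
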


\begin{proof}
Let $V\subset R[\cg]$ be a 
subcomodule of the right regular representation which, as an $R$-module, is of finite type and contains generators of the ideal $I$ cutting out $\ch$.   Let $W:=V\cap I$ and note that $W$ is saturated in $V$  since $I\subset R[\cg]$ is. 
Let then $\{\bb v_1,\ldots,\bb v_s\}$ be a basis of $V$ over $R$ such that $\{\bb v_1,\ldots,\bb v_r\}$ is a basis of $W$; in this case $R[\cg]\bb v_1+\cdots+R[\cg]\bb v_r=I$. Write $\De \bb v_j=\sum \bb v_i\ot a_{ij}$, so that $\bb{Stab}_\cg(W)$ is cut out by the ideal $J=(a_{ij}\,:\,i>r,\,\,  j\le r)$, see the  proof of the last Lemma in \cite[12.4]{waterhouse}. We now claim that $J=I$, hence proving that $\ch=\bb{Stab}_{\cg}(W)$. 

First, since $\bb v_j=(\ep\ot\id)\circ \De\bb v_j$, we see, using that $\ep(\bb v_1)=\ldots=\ep(\bb v_r)=0$, that $\bb v_j=\sum_{i=r+1}^s\ep(\bb v_i)a_{ij}$, which shows that $I\subset J$. Now, let $\ps:R[\cg]\to R[\ch]$ be the projection and consider the $R[\ch]$-comodule defined by $(\id\ot\ps)\De:R[\cg]\to R[\cg]\ot R[\ch]$. (This corresponds to the action of $\ch$ on $\cg$ by multiplication on the right.) Then, we know that $V$ and $I$ are $R[\ch]-$subcomodules, so that $(\id\ot\ps)\De(I)\subset I\ot R[\ch]$ and $(\id\ot\ps)\De(V)\subset V\ot R[\ch]$. This proves that $(\id\ot\ps)\De(W)\subset W\ot R[\ch]$, because $R[\ch]$ is $R$-flat \cite[Theorem 7.4(i)]{matsumura}.  Therefore, since $\bb v_1,\ldots,\bb v_r\in W$, we see that $(\id\ot\ps)\De(\bb v_j)\in W\ot R[\ch]$ provided that $1\le j\le r$. But $(\id\ot\ps)\De(\bb v_j)=\sum_{i\le r}\bb v_i\ot\ps(a_{ij})+\sum_{i>r}\bb v_i\ot\ps(a_{ij})$, and hence $\ps(a_{ij})=0$, if $i>r$ and $j\le r$. This proves the inclusion $J\subset I$, and our claim. 

The fact that $W$ can be chosen of rank one follows from $\bb{Stab}_{\cg}(W)=\bb{Stab}_\cg(\wedge^s W)$ \cite[Appendix 2]{waterhouse}.
\end{proof}

\begin{proof}[Proof of Theorem \ref{17.10.2016--7}]Since $G$ is prudent, Lemma \ref{17.10.2016--5} guarantees that $\Ps'$ is prudent. Let $H\subset \Psi$ be the closure of ${\rm Haus}(\Ps')\ot K\subset \Psi\ot K$ in  $\Ps$.  Lemma \ref{17.10.2016--8} shows that $\Psi'\to \Psi$ factors as 
\[
\Psi'\aro \cn_{H}^\infty(\Psi)\aro\Psi.
\]
Another application of Lemma \ref{17.10.2016--5} ensures that $\cn^\infty_H(\Psi)$ is prudent because $\Psi'$ is. As $\Ps$ is of finite type, Lemma \ref{17.10.2016--9} tells us that $H=\Psi$. Hence,  the closed embedding ${\rm Haus}(\Ps')\subset\Ps'$ becomes an isomorphism when base-changed to $K$. This is only possible if ${\rm Haus}(\Ps')=\Ps'$, and we are done. 
\end{proof}

As a simple application of Theorem \ref{17.10.2016--6}, we note that if $\cn$ is the group scheme studied in Section \ref{23.02.2017--7}, then $R[\cn]$ is a free $R$-module. To see this, let $\Pi$ stand for the Tannakian envelope of the infinite cyclic group $\Ga$ and $u_\ph:\Pi\to\cn$ for the \emph{faithfully flat} morphism constructed in Proposition \ref{13.09.2017--1}. Since   $\Pi$ is a prudent group scheme (see Example \ref{03.10.2018--1}) we conclude that  $(0)=\g d(\Pi)=\g d(\cn)$, which implies, due to Theorem \ref{kaplansky}, freeness of $R[\cn]$.

\subsection{Prudence and some other characterizations}\label{28.09.2017--3}
Fundamental as it is, Kaplansky's theorem has the drawback of needing the hypothesis on the rank.  Using Raynaud-Gruson's notion of Mittag-Leffler modules \cite[Ch. 2, \S2, Definition 3]{raynaud} (see also \cite[Part 2]{raynaud-gruson}), we can put our knowledge in a more efficient structure and examine two 
other distinctive features which were singled out in  \cite{DH14} (repeated in Definition \ref{specially_locally_finite} below) and in \cite[${\rm VI}_B$, 11]{SGA3}.
We recall that $G$ is a flat group scheme over the complete discrete valuation ring $R$. 



\begin{dfn}[{cf. \cite[Section 3.1]{DH14}}]\label{specially_locally_finite} A $G$-module $M$ is called \emph{specially locally finite} if for each $G$-submodule $V\subset M$ whose underlying $R$-module is of finite type, the saturation $V^{\rm sat}\subset M$ is also of finite type (over $R$). The group scheme $G$ is specially locally finite if $R[G]_{\rm right}$  is specially locally finite. 
\end{dfn}

As remarked already in \cite[Proposition 3.1.5]{DH14}, the above property is closely related to the Mittag-Leffler condition. The ensuing result straightens this relation.

\begin{prp}\label{16.10.2018--2}The following conditions   are equivalent. 
\begin{enumerate}\item The group scheme $G$ is prudent. 
\item The divisible ideal $\g d(G)$ is null. 
\item The group scheme $G$ is specially locally finite. 
\item The $R$-module $R[G]$ satisfies the Mittag-Leffler condition of Gruson-Raynaud. 
\item For each $V\in\repp{R}{G}$ and each finite subset $F\subset V$, the set 
\[
\{\text{$W$ subrepresentation of $V$ containing $F$}\}\]
has a least element. 
\end{enumerate}
\end{prp}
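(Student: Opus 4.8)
The plan is to establish the five conditions equivalent by running the single cycle
\[(1)\Rightarrow(2)\Rightarrow(3)\Rightarrow(4)\Rightarrow(5)\Rightarrow(1).\]
The first arrow $(1)\Rightarrow(2)$ is precisely Theorem \ref{17.10.2016--6}. For $(2)\Rightarrow(3)$ I would take a $G$-submodule $V\subset R[G]_{\rm right}$ of finite type and invoke part (1) of the lemma following Proposition \ref{15.09.2017--2}: since $\g d(R[G])=0$, the saturation $V^{\rm sat}$ is free, and as $V^{\rm sat}\ot K=V\ot K$ is finite dimensional, $V^{\rm sat}$ is of finite type; this is exactly special local finiteness. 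For $(3)\Rightarrow(4)$ I would appeal to the Raynaud--Gruson criterion \cite{raynaud-gruson}: a flat module is Mittag-Leffler precisely when every finite subset lies in a finitely generated submodule with flat quotient. Over a DVR such a submodule is free and saturated, and local finiteness of the $G$-action lets one enlarge a finite subset of $R[G]$ to a finite-type subcomodule and then saturate using (3); this is the relation anticipated in \cite[Proposition 3.1.5]{DH14}.

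The heart of the argument is $(4)\Rightarrow(5)$, which rests on the observation that Mittag-Leffler-ness turns \emph{arbitrary} intersections of subcomodules into subcomodules. Fix $V\in\repp RG$ and a family of subcomodules $W_\la\subset V$. By flatness of $R[G]$ one has
\[\Big(\textstyle\bigcap_\la W_\la\Big)\ot R[G]=\ker\Big(V\ot R[G]\to\big(\textstyle\prod_\la V/W_\la\big)\ot R[G]\Big),\]
whereas $\bigcap_\la\big(W_\la\ot R[G]\big)$ is the kernel of $V\ot R[G]\to\prod_\la\big(V/W_\la\ot R[G]\big)$. The Mittag-Leffler property, applied to the family $\{V/W_\la\}$, asserts exactly that the comparison map $\big(\prod_\la V/W_\la\big)\ot R[G]\to\prod_\la\big(V/W_\la\ot R[G]\big)$ is injective, so the two kernels agree. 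Hence $\rho(\bigcap_\la W_\la)\subset\bigcap_\la(W_\la\ot R[G])=(\bigcap_\la W_\la)\ot R[G]$, and $\bigcap_\la W_\la$ is again a subcomodule. Intersecting all subrepresentations of $V$ that contain a prescribed finite set $F$ then yields the least element required by (5).

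For $(5)\Rightarrow(1)$ I would first reduce, via Lemma \ref{17.01.2018--1}, to a vector $\bb v=\bb v_1$ belonging to a basis $\{\bb v_1,\ldots,\bb v_r\}$ of some $V\in\repp RG$ whose reduction modulo $\pi^{n+1}V$ is semi-invariant for all $n$. For each $n$ the preimage $W_n:=R\bb v+\pi^{n+1}V$ of the semi-invariant line $R_n\ov{\bb v}\subset V/\pi^{n+1}V$ is a subrepresentation containing $\bb v$, and $\bigcap_nW_n=R\bb v$ because $\bigcap_n\pi^{n+1}V=0$. If (5) holds, the least subrepresentation $W_*$ containing $\bb v$ satisfies $W_*\subset W_n$ for every $n$, whence $W_*\subset R\bb v$ and therefore $W_*=R\bb v$. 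Thus $R\bb v$ is a subcomodule, i.e.\ $\bb v$ is semi-invariant, and prudence follows by Lemma \ref{17.01.2018--1}.

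I expect the main obstacle to be $(4)\Rightarrow(5)$, and more precisely the discipline of using the product form of the Mittag-Leffler condition together with the kernel computation above. A conceptual subtlety worth stressing is that, because $G$ is not of finite type, $R[G]$ need not be noetherian, so the ``subcomodule generated by a finite set'' need \emph{not} exist a priori; this very failure is the content of (5), as the argument for $(5)\Rightarrow(1)$ illustrates, where the naive candidate $R\bb v$ fails to be stable. The secondary care is in matching Definition \ref{specially_locally_finite} to the Raynaud--Gruson formulation in $(3)\Rightarrow(4)$; the remaining implications are formal. (One could also record the direct implication $(2)\Rightarrow(1)$, reading off $\g d(R[G])=0$ from the vanishing of the off-diagonal coaction coefficients via Lemma \ref{17.10.2016--1}, as an independent check.)
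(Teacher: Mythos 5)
Your proof is correct, and it runs the same cycle $(1)\Rightarrow(2)\Rightarrow(3)\Rightarrow(4)\Rightarrow(5)\Rightarrow(1)$ as the paper, but the middle implications are carried out by genuinely different means. The paper proves $(2)\Rightarrow(3)$ and $(4)\Rightarrow(5)$ by leaning on the structure theory of \cite{DH14}: it writes $G=\lip_\la G_\la$ with faithfully flat transition maps and finite-type generic fibres, descends the Mittag-Leffler property to each $R[G_\la]$ via \cite[Part 2, \S 1, Corollary 2.1.6]{raynaud-gruson}, deduces projectivity of each $R[G_\la]$ from \cite[Chapter 2, \S 2, Proposition 1]{raynaud}, and then obtains the least subrepresentation from \cite[${\rm VI}_B$, 11.8.1]{SGA3} together with the fact that every $G$-subrepresentation comes from some $G_\la$. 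You replace all of this: in $(4)\Rightarrow(5)$, flatness of $R[G]$ plus the product form of the Mittag-Leffler condition give
\[
\Big(\bigcap_\la W_\la\Big)\ot R[G]=\bigcap_\la\big(W_\la\ot R[G]\big),
\]
so that \emph{arbitrary} intersections of subcomodules are subcomodules, and the least element required in (5) is simply the intersection of all subrepresentations containing $F$; in $(2)\Rightarrow(3)$ you use only part (1) of the lemma following Proposition \ref{15.09.2017--2} together with Kaplansky's theorem (Theorem \ref{kaplansky}), noting that a free module with finite-dimensional $K$-span has finite rank. Your route is more self-contained (no pro-algebraic presentation of $G$, no SGA3 input) and isolates exactly what Mittag-Leffler buys, namely stability of subcomodules under arbitrary intersections; the paper's route yields along the way the projectivity of the quotients $R[G_\la]$, which is of independent interest.

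Three small points. (a) In $(3)\Rightarrow(4)$ you state a Raynaud--Gruson criterion as a biconditional; only one direction is needed, and it is safest to verify it directly: a finitely generated saturated submodule $N\subset R[G]$ is finite free and pure, so $N\ot\prod_\la Q_\la\to\prod_\la(N\ot Q_\la)$ is bijective and $\prod_\la(N\ot Q_\la)\to\prod_\la(R[G]\ot Q_\la)$ is injective; since by (3) and local finiteness $R[G]$ is the directed union of such $N$, and tensor products commute with direct limits, the injectivity of $R[G]\ot\prod_\la Q_\la\to\prod_\la(R[G]\ot Q_\la)$ follows. (b) In $(5)\Rightarrow(1)$, the equality $\bigcap_n(R\bb v+\pi^{n+1}V)=R\bb v$ rests on the $\pi$-adic separatedness of the finitely generated module $V/R\bb v$ (Krull's intersection theorem), not merely on $\bigcap_n\pi^{n+1}V=0$; this is harmless, and the paper's own proof makes the same silent step. (c) Your parenthetical direct argument for $(2)\Rightarrow(1)$ is only a sketch and is not needed, since the cycle already closes.
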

\begin{proof}
(1) $\Rightarrow$ (2). This is the content of   Theorem \ref{17.10.2016--6}.

(2) $\Rightarrow$ (3). We know from \cite[Theorem 4.1.1]{DH14} that $G$ is the limit in $(\bb{FGSch}/R)$ of a system \[\{G_\la,\ph_{\la,\mu}:G_\mu\to G_\la\}\] where: 
 (a) each $G_\la\ot K$ is of finite type, and 
(b) each $\ph_{\la\mu}$ is faithfully flat. 
Since $\g d(G)=0$, we can certainly say that  $\g d(G_\la)=0$ for all $\la$, so that, by 
Kaplansky's Theorem (Theorem \ref{kaplansky}),  $R[G_\la]$ is free as an $R$-module. Now, $R[G]_{\rm right}$ is the direct limit of a system of $G$-modules,  $R[G]_{\rm right}=\lid_\la R[G_\la]$, and, by Proposition 3.1.5-(ii) of \cite{DH14},  $R[G_\la]$ is specially locally finite. As each $R[G_\la]$ is saturated in $R[G]$, it becomes a  simple matter to show that       $R[G]$ is also specially locally finite.

(3) $\Rightarrow$ (4). This is already explained in \cite[Proposition 3.1.5-(i)]{DH14}.


(4) $\Rightarrow$ (5). We write $G=\lip_\la  G_\la$ as in the proof of ``(2) $\Rightarrow$ (3).'' 
Since the canonical morphism $G\to G_\la$ is faithfully flat, condition (4) and Corollary 2.1.6 of  \cite[Part 2, \S1]{raynaud-gruson} guarantee  that  $R[G_\la]$ satisfies the Mittag-Leffler condition. The fact that $K[G_\la]$ is of finite type jointly with Proposition 1 of \cite[Chapter 2, \S2]{raynaud} assure in turn that, for each $\la$, $R[G_\la]$ is a projective $R$-module.

Let now $V$ and $F$ be as in condition (5). It is possible to find some $\la$ such that $V$ actually comes from a representation of $G_\la$. Now   \cite[${\rm VI}_B$, 11.8.1, p.418]{SGA3} assures the existence of a $G_\la$-subrepresentation $W$ containing $F$ and not properly containing any other such $G_\la$-subrepresentation. Since any $G$-subrepresentation of $V$ must actually come from a representation of $G_\la$ \cite[Theorem 4.1.2(i)]{DH14}, we are done.    

(5) $\Rightarrow$ (1). Let $V\in\repp{R}{G}$. If $\bb v\in V$ is such that its image in $V/\pi^{n+1}$ is a semi-invariant, it follows that $W_n:=R\bb v+\pi^{n+1}V$ is a subrepresentation. Now, let $W\subset V$ be a  subrepresentation containing $\bb v$ and not properly containing any other such. It then follows that $R\bb v\subset W\subset\cap W_n=R\bb v$, so that $R\bb v$ is a subrepresentation. By definition, $\bb v$ is semi-invariant, and $G$ must be prudent.  
\end{proof}

\subsection{Applications to the full differential Galois group}\label{28.09.2017--4}

Let $f:X\to S$ be a smooth and proper morphism having geometrically connected fibres and admitting a section $\xi\in X(R)$. Given $\ce\in\dmod{X/R}^\circ$, let $\mm{Gal}'(\ce)$ be the full differential Galois group. (See   Section 7 of \cite{DHdS15} or   Section \ref{22.08.2020--1}.)

\begin{thm}\label{prudence_galois}The ring of functions of the group $\mathrm{Gal}'(\ce)$ is a free $R$-module. 
\end{thm}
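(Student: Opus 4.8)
The plan is to deduce the statement from the two pillars of this section: the implication ``prudent $\Rightarrow\g d=0$'' (Theorem \ref{17.10.2016--6}) together with Kaplansky's criterion (Theorem \ref{kaplansky}). Writing $G:=\mathrm{Gal}'(\ce)$, I would first establish that $G$ is prudent, then invoke Theorem \ref{17.10.2016--6} to obtain $\cap_n\pi^nR[G]=\g d(G)=0$, and finally apply Theorem \ref{kaplansky}. For this last step one must check the two hypotheses of Kaplansky: that $R[G]$ is $R$-flat — which holds because $G\in(\bb{FGSch}/R)$ by the very construction of the full differential Galois group in \cite[Section 7]{DHdS15} — and that $R[G]\ot K$ has countable rank. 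The latter is clear, since $G\ot K$ is the differential Galois group over the field $K$ of the single object $\ce\ot K$, hence an algebraic group of finite type over $K$, so that $K[G\ot K]$ is a finitely generated $K$-algebra and in particular of countable dimension.

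The heart of the matter is therefore the prudence of $G$, and this is where GFGA for $\cd$-modules (Proposition \ref{03.07.2017--1}) enters. By Tannakian duality the fibre functor $\xi^*$ identifies $\repp{R}{G}$ with the objects of the $\ot$-subcategory $\langle\ce\rangle_\ot\subset\modules{\cd(X/S)}$ whose underlying sheaf is free, and subrepresentations correspond to sub-$\cd$-modules. By Lemma \ref{17.01.2018--1} it suffices to treat an element $\bb v$ belonging to a basis of some $V\in\repp{R}{G}$; write $V=\xi^*\cf$ with $\cf\in\langle\ce\rangle_\ot$, and let $\bb v_n$ denote the reduction of $\bb v$ modulo $\pi^{n+1}$. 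Saying that $\bb v_n$ is semi-invariant means that the line $R_n\bb v_n$ is a subrepresentation of $V\ot R_n$, that is — passing through the equivalence and the reduction functor $v_n^\#$ — that there is a (necessarily unique) sub-$\cd(X_n/S_n)$-module $\cg_n\subset v_n^\#\cf=\cf_n$ with $\xi_n^*\cg_n=R_n\bb v_n$.

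The key observation is that, by uniqueness of the sub-object with a prescribed fibre, the canonical isomorphisms $u_n^\#\cf_{n+1}\cong\cf_n$ carry $\cg_{n+1}$ onto $\cg_n$; hence $(\cg_n)$ is a sub-object of $\mm D\Ph(\cf)=(\cf_n)$ in the category $\modules{\cd(X/S)}^\wedge$. Because $f$ is proper, Proposition \ref{03.07.2017--1} asserts that $\mm D\Ph$ is an equivalence, so $(\cg_n)$ is the image of a genuine sub-$\cd(X/S)$-module $\cg\subset\cf$; reducing modulo $\pi^{n+1}$ and using completeness of $R$ gives $\xi^*\cg=R\bb v$. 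Thus $R\bb v$ is a subrepresentation of $V$, i.e.\ $\bb v$ is semi-invariant, and $G$ is prudent.

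The main obstacle I anticipate is one of bookkeeping rather than of ideas: one must pin down carefully the dictionary between semi-invariance modulo $\pi^{n+1}$ and sub-objects of the infinitesimal reductions $\cf_n$ (minding the free/saturated subtleties over $R$ used in \cite{DH14}), and verify that the family $(\cg_n)$ is genuinely an object of $\modules{\cd(X/S)}^\wedge$ to which Proposition \ref{03.07.2017--1} applies — the compatibility $u_n^\#\cg_{n+1}\cong\cg_n$ being the crucial point. Once this translation is secured, Theorems \ref{17.10.2016--6} and \ref{kaplansky} complete the argument with no further work.
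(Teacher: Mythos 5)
Your proposal is correct and follows essentially the same route as the paper: prudence of $\mathrm{Gal}'(\ce)$ established by algebraizing the compatible tower of rank-one subobjects through the GFGA equivalence for $\cd$-modules, then Theorem \ref{17.10.2016--6} and Kaplansky's theorem. The only immaterial differences are that you invoke Proposition \ref{03.07.2017--1} as a black-box equivalence where the paper applies coherent-sheaf GFGA and then the two Claims inside that proposition's proof, and that you verify explicitly the countable-rank hypothesis of Theorem \ref{kaplansky} (via finite-typeness of the generic fibre), which the paper leaves implicit.
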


\begin{proof}For brevity, we write $G'=\mm{Gal}'(\ce)$ and $E=\xi^*\ce$. We need to show that $G'$ is prudent to apply Theorem \ref{17.10.2016--6} and Theorem \ref{kaplansky}. So let $\cv\in\modules{\cd(X/S)}^\circ$ belong to $\langle\ce\rangle_\ot$ and write $V=\xi^*\cv$; this free $R$-module affords a representation of $G'$. 
Let now $\bb v$ be a non-zero element of $V$ whose image in  $V_n=V/\pi^{n+1}$, call it  $\bb v_n$, 
is semi-invariant.  It is to prove that 
$R\bb v$ is also a subrepresentation of $V$. As Lemma \ref{17.01.2018--1} guarantees, there is no loss of generality in supposing that $\bb v$ belongs to a basis of $V$.

We denote by $\ps_n:L_n\to V_n$ the inclusion of $R\bb v_n$ (an $R$-module isomorphic to $R_n$) into $V_n$.
The equivalence $\xi^*:\langle  \ce \rangle_\ot\stackrel{\sim}{\to}\langle E\rangle_\ot$ produces, from the commutative diagram with exact rows 
\[
\xymatrix{
L_{n+1}\ar[r]^-{\pi^{n+1}}\ar@{^{(}->}[d]_{\ps_{n+1}} & L_{n+1}\ar@{^{(}->}[d]^{\ps_{n+1}}  \ar[r] &  \ar@{^{(}->}[d]^{\ps_n}L_n\ar[r]&  0 \\ V_{n+1}\ar[r]^-{\pi^{n+1}} & V_{n+1}  \ar[r] &  V_n\ar[r]&  0,}
\]
a commutative diagram with exact rows in $\modules{\cd(X/S)}$: 
\[
\xymatrix{
\mathcal L_{n+1}\ar[r]^-{\pi^{n+1}}\ar@{^{(}->}[d]_{\phi_{n+1}} &\mathcal  L_{n+1}\ar@{^{(}->}[d]^{\phi_{n+1}}  \ar[r] &  \ar@{^{(}->}[d]^{\phi_n}\mathcal L_n\ar[r]&  0 \\ \mathcal V_{n+1}\ar[r]^-{\pi^{n+1}} & \mathcal V_{n+1}  \ar[r] &  \mathcal V_n\ar[r]&  0.}
\]

By GFGA \cite[Theorem 8.4.2]{Illusie}, there exists an arrow of coherent sheaves 
\[
\phi:\cl\aro\cv
\]
inducing $\phi_n$ for each $n$. Using Lemma \ref{14.09.2017--1} and the fact that each $\phi_n$ is a monomorphism we conclude that $\phi$ is also a monomorphism. Applying the claim made on the proof of essential surjectivity in Proposition \ref{03.07.2017--1}, we conclude that $\cl$ carries a structure of $\cd(X/S)$-module. In addition, $\phi$ is then an arrow of $\cd(X/S)$-modules, according to the claim made on the proof of fully faithfulness in Proposition \ref{03.07.2017--1}. As we already remarked that $\phi$ is a monomorphism, we conclude that  $\phi$ is really an arrow of $\langle\ce\rangle_\ot$.

 Let $\ps:L\to V$ be an arrow in $\rep{R}{G'}$ whose image under $\xi^*$ is $\phi$ so that $\ps$ induces $\ps_n$ upon reduction modulo $\pi^{n+1}$. It only takes a moment's thought to see that  $R\bb v=\ps(L)$, so that $R\bb v$ is a subrepresentation of $V$.
\end{proof}


The various differential Galois groups can be put together to form the relative fundamental group scheme of $X$ based at $\xi$, see \cite[Section 5.1]{DH14}. If we denote this group scheme by $\Pi$, then $\Pi=\lip_\la G_\la$, where each $R[G_\la]$ is a free $R$-module (by Theorem \ref{prudence_galois}) and the transition arrows $G_\mu\to G_\la$ are faithfully flat (we use \cite[Theorem 4.1.2(i)]{DH14}). Then    \cite[Tag 0AS7]{stacks} tells us that 

\begin{cor}The ring of functions of the relative  fundamental group scheme  (cf.  \cite[Section  5.1]{DH14}) of $X/R$ based at $\xi$  satisfies the Mittag-Leffler condition.  \qed
\end{cor}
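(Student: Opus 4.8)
The plan is to read off the Mittag-Leffler property of $R[\Pi]$ from the presentation $\Pi=\lip_\la G_\la$ recorded above, which on coordinate rings becomes the filtered colimit $R[\Pi]=\lid_\la R[G_\la]$. First I would recall that each $G_\la$ is a full differential Galois group, so that Theorem \ref{prudence_galois} makes every $R[G_\la]$ a \emph{free}, hence projective, hence Mittag-Leffler, $R$-module. The problem is thereby reduced to showing that this particular filtered colimit of Mittag-Leffler modules is again Mittag-Leffler.

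The key obstacle is that filtered colimits of Mittag-Leffler modules are \emph{not} Mittag-Leffler in general --- the colimit $K=\lid_n R$ along the multiplication-by-$\pi$ maps being the standard warning --- so everything hinges on the nature of the transition arrows. Here the comorphisms $R[G_\la]\to R[G_\mu]$ come from the \emph{faithfully flat} projections $G_\mu\to G_\la$ of \cite[Theorem 4.1.2(i)]{DH14}. I would use that a faithfully flat ring extension $R[G_\la]\to R[G_\mu]$ is pure (universally injective) as a map of $R[G_\la]$-modules: solving the equation $\pi Y=a$ with $a\in R[G_\la]$ inside $R[G_\mu]$ and transporting the solution back by purity, together with the $R$-torsion-freeness of $R[G_\mu]$ (itself free over $R$), shows that $R[G_\la]$ is \emph{saturated} in $R[G_\mu]$. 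Consequently each cokernel $R[G_\mu]/R[G_\la]$ is $R$-torsion-free, hence $R$-flat, so that $(R[G_\la])$ is a filtered system of free modules whose transition maps are pure injections. This places us squarely in the situation covered by \cite[Tag 0AS7]{stacks}, which then yields that $R[\Pi]=\lid_\la R[G_\la]$ is Mittag-Leffler.

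I expect the main difficulty to lie exactly in verifying the hypothesis on the transition maps required by \cite[Tag 0AS7]{stacks}; once the comorphisms are known to be saturated (equivalently, pure, equivalently of $R$-flat cokernel), the conclusion is a formal invocation of that lemma. The one remaining point, routine for affine group schemes, is the identification of the coordinate ring of the cofiltered limit $\lip_\la G_\la$ with the filtered colimit $\lid_\la R[G_\la]$, which legitimises the whole reduction.
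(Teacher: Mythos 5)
Your proposal is correct and follows essentially the same route as the paper: write $R[\Pi]=\lid_\la R[G_\la]$ with each $R[G_\la]$ free by Theorem \ref{prudence_galois}, note the transition maps are faithfully flat by \cite[Theorem 4.1.2(i)]{DH14}, and conclude by \cite[Tag 0AS7]{stacks}. The only difference is that you spell out the verification (faithful flatness $\Rightarrow$ purity $\Rightarrow$ saturation $\Rightarrow$ $R$-flat cokernel, hence universally injective transition maps) that the paper leaves implicit when invoking the tag, and this verification is sound.
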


Let us end this section by putting together Theorem \ref{prudence_galois} and the results on unipotent group schemes explained in Section \ref{06.02.2017--1}.

\begin{cor}\label{16.10.2018--1}Assume that $\ce_K$ is unipotent and that $R$ is of characteristic $(0,0)$. Then $\mathrm{Gal}'(\ce)$ is of finite type. 
\end{cor}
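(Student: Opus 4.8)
The plan is to combine the freeness of $R[\mm{Gal}'(\ce)]$ supplied by Theorem \ref{prudence_galois} with the unipotent dichotomy of Corollary \ref{29.09.2017--1}. Write $G'=\mm{Gal}'(\ce)$ and $E=\xi^*\ce$, so that $E$ is a faithful object of $\repp{R}{G'}$ and $G'_K\hookrightarrow\bb{GL}(E_K)$ is a closed immersion. To invoke Corollary \ref{29.09.2017--1} I must first produce a \emph{unipotent} group scheme $U$ of finite type over $R$ together with a morphism $G'\to U$ inducing an isomorphism on generic fibres; building such a $U$ is the real content of the proof, the rest being formal.

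The first step is to observe that $E$ is a unipotent representation not merely generically but already over $R$. Since $\ce_K$ is unipotent, $G'_K$ is a unipotent $K$-group, so $E_K$ carries a full flag $0=F_0\subset\cdots\subset F_n=E_K$ of $G'_K$-subrepresentations with trivial successive quotients. I would then lift this flag to $R$ by saturation, setting $\ov F_i=E\cap F_i$. Each $\ov F_i$ is saturated in $E$, and a short check with the coaction $\rho\colon E\to E\ot_RR[G']$ (using that $R[G']$ is $R$-flat, so that $E\ot_RR[G']$ is saturated in $E_K\ot_KK[G']$) shows that $\ov F_i$ is in fact a $G'$-subrepresentation. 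The quotients $\ov F_i/\ov F_{i-1}$ are free over $R$ with trivial generic fibre; because $\ov F_i/\ov F_{i-1}\ot_RR[G']$ is $R$-torsion-free, a coaction that becomes trivial after $\ot K$ is already trivial over $R$. Choosing a basis of $E$ adapted to the flag $\ov F_\bullet$, the homomorphism $\rho\colon G'\to\bb{GL}(E)$ therefore factors through the closed subgroup $\bb{U}_n\subset\bb{GL}(E)$ of upper unitriangular matrices.

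The second step is to take $U$ to be the schematic image of $G'\to\bb{U}_n$. Its coordinate ring is the image of $R[\bb{U}_n]\to R[G']$, hence a finitely generated, $R$-torsion-free (so flat) sub-Hopf-algebra of $R[G']$; thus $U$ is flat and of finite type over $R$, and $\rho$ factors as $G'\to U\hookrightarrow\bb{U}_n$. On generic fibres $U_K$ is the schematic image of the closed immersion $G'_K\hookrightarrow\bb{U}_{n,K}$, so $G'\to U$ is an isomorphism after $\ot K$; moreover $U$ is unipotent, since $U_K\cong G'_K$ is unipotent while $U_k$ is a closed subgroup scheme of the unipotent group $\bb{U}_{n,k}$, hence unipotent by \cite[IV.2.2.3]{DG}. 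Now Corollary \ref{29.09.2017--1} applies to $\rho\colon G'\to U$: either $G'$ is of finite type, in which case we are done, or $R[G']$ contains an $R$-submodule isomorphic to $K$. The latter is impossible, for by Theorem \ref{prudence_galois} the module $R[G']$ is free, whence $\g d(R[G'])=\cap_n\pi^nR[G']=0$, whereas any copy of $K$ is $\pi$-divisible and so would lie in $\g d(R[G'])$. Therefore $G'=\mm{Gal}'(\ce)$ is of finite type.

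The main obstacle is precisely the passage from generic to integral unipotence in the first step: verifying that the saturated preimages $\ov F_i$ are genuinely $G'$-stable and that the action on the graded pieces is trivial \emph{over $R$}, which is what forces the finite-type model $U$ to have \emph{unipotent special fibre}. Once $\rho$ is known to land in $\bb{U}_n$, constructing $U$ and combining Corollary \ref{29.09.2017--1} with Theorem \ref{prudence_galois} is routine.
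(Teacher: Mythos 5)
Your proof is correct, and its endgame is exactly the paper's: produce a flat, finite-type, unipotent model $U$ together with a morphism $\mathrm{Gal}'(\ce)\to U$ that is an isomorphism on generic fibres, invoke the dichotomy of Corollary \ref{29.09.2017--1}, and rule out a copy of $K$ inside $R[\mathrm{Gal}'(\ce)]$ using the freeness given by Theorem \ref{prudence_galois}. Where you genuinely differ is in how unipotency of the model \emph{over $R$} (i.e., on both fibres) is established. The paper takes $U$ to be the restricted differential Galois group $\mathrm{Gal}(\ce)$ of \cite[Section 7]{DHdS15} --- which, being the schematic image of $\mathrm{Gal}'(\ce)$ in $\bb{GL}(\xi^*\ce)$, is the same group scheme as your $U$ --- and deduces its unipotency from Tong's theorem \cite[Theorem 2.11]{tong}: the generic fibre is unipotent, hence connected by \cite[IV.2.2.5]{DG}, and Tong's result then yields unipotency of the special fibre for free. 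You instead prove this unipotency directly: you saturate a full flag of $G'_K$-subrepresentations of $E_K$, verify (correctly, via torsion-freeness of $E\ot_R R[G']$ and of the graded pieces) that the saturated submodules are $G'$-stable with trivial action on the quotients, conclude that the monodromy representation factors through the unitriangular group $\bb{U}_n$, and then observe that the schematic image has special fibre a closed subgroup of $\bb{U}_{n,k}$, hence unipotent by \cite[IV.2.2.3]{DG}. In effect you have reproved, by an elementary integral-flag argument, the special case of Tong's (Dolgachev--Weisfeiler) theorem that the paper quotes; this buys self-containedness at the cost of the saturation and coaction verifications, whereas the paper's route is shorter because both $\mathrm{Gal}(\ce)$ and Tong's theorem are available off the shelf. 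Both arguments rest on the same implicit identification of $\mathrm{Gal}'(\ce)\ot K$ with the differential Galois group of $\ce_K$ (you use it to get the closed immersion $G'_K\hookrightarrow\bb{GL}(E_K)$, the paper to transfer unipotency of $\ce_K$ to $\mathrm{Gal}(\ce)\ot K$), so no extra hypothesis is smuggled in.
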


\begin{proof}We consider the morphism 
${\rm Gal}'(\ce)\to{\rm Gal}(\ce)$
from the full to the restricted differential Galois group (see Section 7 of \cite{DHdS15}).  Since $\ce_K$ is unipotent,  ${\rm Gal}(\ce)\ot K$ is unipotent and, a fortiori, connected \cite[IV.2.2.5, p.487]{DG}. 
Hence,    \cite[Theorem 2.11, p.187]{tong} shows that ${\rm Gal}(\ce)$ is 
unipotent over $R$. We   now  apply Corollary \ref{29.09.2017--1} and Theorem \ref{prudence_galois} to conclude that ${\rm Gal}'(\ce)$ is of finite type. 
\end{proof}

\begin{rmk}In Theorem \ref{prudence_galois}, suppose that $R$ is of characteristic $(0,0)$. From Corollary \ref{11.07.2017--2} we know that ${\rm Gal}'(\ce)$ is of the form $\cn^\infty_{\g H}(G)$ with $G$ of finite type over $R$. Theorem \ref{prudence_galois} then says that $\g H$ is {\it not} algebraizable, that is, it is not the completion of some closed flat subgroup $H\subset G$ (otherwise $R[{\rm Gal}'(\ce)]$ would contain a copy of $K$).   
\end{rmk}

\begin{rmk}Theorem \ref{prudence_galois} is certainly false in case $X$ is simply affine. See Example 7.11 in \cite{DHdS15}.
\end{rmk}

\end{document}